\theoremstyle{plain}
\newtheorem*{teo1}{Theorem 1}
\newtheorem*{teo2}{Theorem 2}
\newtheorem{lem}{Lemma}[section]
\newtheorem{teo}[lem]{Theorem}
\newtheorem{prop}[lem]{Proposition}
\newtheorem{cor}[lem]{Corollary}
\theoremstyle{definition}
\newtheorem*{dfn*}{Definition}
\newtheorem{dfn}[lem]{Definition}
\theoremstyle{remark}
\newtheorem{oss}[lem]{Remark}
\newtheorem{ex}[lem]{Example}
\let\oldmarginpar\marginpar
\renewcommand\marginpar[1]{\-\oldmarginpar[\raggedleft\footnotesize
    #1]{\raggedright\footnotesize #1}}
\DeclareMathOperator{\End}   {End}
\DeclareMathOperator{\diag}  {diag}
\DeclareMathOperator{\Id}    {Id}
\DeclareMathOperator{\Pic}   {Pic}
\DeclareMathOperator{\Supp}  {Supp}
\newcommand{\Quot}[2] {\ensuremath{\raisebox{.75ex}{\ensuremath{#1}} \! \Big / \! \raisebox{-.75ex}{\ensuremath{#2}}}}
\newcommand{\PQ}{\Pi_G}
\newcommand{\LB}[1]{\mathrm{Lb}(#1,G_\mathrm{ad})}
\newcommand{\LBb}[1]{\overline{\mathrm{Lb}}(#1,G_\mathrm{ad})}
\newcommand{\LBQ}[1]{\mathrm{Lb}(#1,G)}
\newcommand{\LBQb}[1]{\overline{\mathrm{Lb}}(#1,G)}
\newcommand{\PQm}[1] {\PQ^{\mathrm{m}}(#1)}
\newcommand{\PQo}[1] {\PQ^+(#1)^\circ}
\begin{document}

\title[Normality and smoothness of simple linear group compactifications]{Normality and smoothness\\of simple linear group compactifications}

\author{Jacopo Gandini and Alessandro Ruzzi}

\date{\today}

\email{gandini@mat.uniroma1.it}
\curraddr{\textsc{Dipartimento di Matematica ``Guido Castelnuovo''\\
                    ``Sapienza'' Universit\`a di Roma\\
                    Piazzale Aldo Moro 5\\
                    00185 Roma, Italy}}

\email{Alessandro.Ruzzi@math.univ-bpclermont.fr}
\curraddr{\textsc{Universit\'e Blaise Pascal - Laboratoire de Math\'ematiques\\ UMR 6620 - CNRS
Campus des C\'ezeaux \\ 63171 Aubi\`ere cedex\\ France}}

\begin{abstract}
Given a semisimple algebraic group $G$, we characterize the normality and the smoothness of its simple linear compactifications, namely those equivariant
$G\times G$-compacti\-fications possessing a unique closed orbit which arise in a projective space of the shape $\mathbb{P}(\End(V))$, where $V$ is a finite
dimensional rational $G$-module. Both the characterizations are purely combinatorial and are expressed in terms of the highest weights of $V$. In
particular, we show that $\mathrm{Sp}(2r)$ (with $r \geqslant 1$) is the unique non-adjoint simple group which admits a simple smooth compactification.
\end{abstract}

\maketitle

\section*{Introduction.}

Let $G$ be a connected semisimple algebraic group defined
over an algebraically closed field $\Bbbk$ of characteristic zero.
Fix a maximal torus $T\subset G$ and a Borel subgroup $B\supset T$,
denote $\mathcal{X}(T)$ the character lattice of $T$
and denote $\mathcal{X}(T)^+$ the semigroup of dominant characters.
If $\Pi \subset \mathcal{X}(T)^+$ is a finite set of dominant weights and if $V(\mu)$
denotes the simple $G$-module of highest weight $\mu$, set
$V_\Pi = \bigoplus_{\mu \in \Pi} V(\mu)$ and consider the $G\times G$-variety
$$
	X_\Pi = \overline{(G \times G) [\Id]} \subset \mathbb{P}(\End(V_\Pi)),
$$
which is a compactification of a quotient of $G$.
Suppose moreover that $X_\Pi$ is a simple $G\times G$-variety,
i.e. that it possesses a unique closed orbit:
the aim of this paper is to characterize the normality and the smoothness
of $X_\Pi$ by giving explicit combinatorial conditions on the set of weights $\Pi$.

In \cite{Ti}, Timashev studied the general situation of a connected reductive group
without any assumption on the number of closed orbits,
giving necessary and sufficient conditions for the normality and for the smoothness of these
compactifications, however the conditions of normality in particular are not completely
explicit. On the other hand, in \cite{BGMR} the authors
together with P.~Bravi and A.~Maffei studied the case of
a simple compactification of a connected semisimple adjoint group $G$:
in that case the conditions of normality and smoothness
were considerably simplified and this has been
the starting point of the paper.

To explain our results we need some further notation.
Let $\Phi$ be the root system associated to $T$
and let $\Delta\subset \Phi$ be the set of simple roots associated to $B$,
which we identify with the set of vertices of the Dynkin diagram of $\Phi$.
Recall the \textit{dominance order} and the \textit{rational dominance order} on $\mathcal{X}(T)$,
respectively defined by $\nu \leqslant \mu$ if and only if $\mu - \nu \in \mathbb{N}[\Delta]$ and by
$\nu \leqslant_\mathbb{Q} \mu$ if and only if $\mu - \nu \in \mathbb{Q}^+[\Delta]$.

If $\lambda \in \mathcal{X}(T)^+$, denote $\Pi(\lambda) \subset \mathcal{X}(T)$ the set of weights occurring in $V(\lambda)$ and denote $\mathcal{P}(\lambda)$ the convex hull of
$\Pi(\lambda)$ in $\mathcal{X}(T) \otimes \mathbb{Q}$. Denote
\begin{align*}
	\Pi^+(\lambda) &= \Pi(\lambda) \cap \mathcal{X}(T)^+ =
	\{ \mu \in \mathcal{X}(T)^+ \, : \, \mu \leqslant \lambda \},\\
	\PQ^+(\lambda) &= \mathcal{P}(\lambda) \cap \mathcal{X}(T)^+ =
	\{ \mu \in \mathcal{X}(T)^+ \, : \, \mu \leqslant_\mathbb{Q} \lambda \}.
\end{align*}
Define moreover the \textit{support} of $\lambda$ as the set
$\Supp(\lambda)=\{\alpha\in \Delta \, : \, \langle \lambda, \alpha^\vee \rangle \neq 0 \}.$

If $\Pi \subset \mathcal{X}(T)^+$, then $X_\Pi$ is a simple compactification of a quotient of $G$ if and only if $\Pi$ possesses a unique maximal element w.r.t.
$\leqslant_\mathbb{Q}$, whereas $X_\Pi$ is a simple compactification of a quotient of the adjoint group $G_\mathrm{ad}$ if and only if $\Pi$ possesses a unique maximal
element w.r.t. $\leqslant$. We will say that a subset $\Pi \subset \mathcal{X}(T)^+$ is \textit{simple} if it possesses a unique maximal element w.r.t. $\leqslant_\mathbb{Q}$,
and we say that $\Pi$ is \textit{adjoint} if it possesses a unique maximal element w.r.t. $\leqslant$. For instance, the subsets $\Pi^+(\lambda)$ and $\PQ^+(\lambda)$
are both simple and $\Pi^+(\lambda)$ is also adjoint.

Suppose that $\Pi$ is a simple adjoint subset with maximal element $\lambda$. The normality of $X_\Pi$ was characterized in \cite{BGMR} by introducing a
subset $\LB{\lambda} \subset \Pi^+(\lambda)$, called the set of \textit{adjoint little brothers} of $\lambda$, with the following property: $X_\Pi$ is a normal
compactification of $G_\mathrm{ad}$ if and only if $\Pi \supset \LB{\lambda}$. Every adjoint little brother of $\lambda$ is a weight covered by $\lambda$ (i.e. it
is maximal in $\Pi^+(\lambda) \smallsetminus \{\lambda\}$ w.r.t. $\leqslant$) which arises correspondingly to a non-simply-laced connected component $\Delta' \subset \Delta$ and
is defined in a purely combinatorial way by $\Supp(\lambda) \cap \Delta'$ (see Definition~\ref{def: twin}).

In order to explain how previous characterization of the normality extends to the non-adjoint case, we need to introduce a partial order on $\mathcal{X}(T)$
which is less fine than the dominance order. If $\beta = \sum_{\alpha \in \Delta} n_\alpha \alpha \in \mathbb{Q}[\Delta]$, define its \textit{support over} $\Delta$ as
$\Supp_\Delta(\beta) = \{ \alpha \in \Delta \, : \, n_\alpha \neq 0 \}$. If $\lambda \in \mathcal{X}(T)^+$, consider the set of positive roots which are non-orthogonal to
$\lambda$
$$
	\Phi^+(\lambda) = \{ \beta \in \Phi^+ \, : \, \Supp_\Delta(\beta) \cap \Supp(\lambda) \neq \varnothing \}
$$
and consider the associated partial orders on $\mathcal{X}(T)$:
\begin{align*}
	\nu \leqslant^\lambda \mu \qquad &\text{ if and only if } \qquad \mu - \nu \in \mathbb{N}[\Phi^+(\lambda)],\\
	\nu \leqslant^\lambda_\mathbb{Q} \mu \qquad &\text{ if and only if } \qquad \mu - \nu \in \mathbb{Q}^+[\Phi^+(\lambda)].
\end{align*}
These orderings arise naturally in the representation theory of $G$: for instance if $\mu \in \Pi(\lambda)$ then $\mu \leqslant^\lambda \lambda$, while if $\mu \in
\mathcal{P}(\lambda)$ then $\mu \leqslant^\lambda_\mathbb{Q} \lambda$ (see Proposition \ref{prop: lambda dominanza in natura}). Define the set of \textit{little brothers} of $\lambda$
w.r.t. $G$ as follows:
$$
	\LBQ{\lambda} = \left\{\mu \in \big(\PQ^+(\lambda) \smallsetminus \Pi^+(\lambda) \big) \cup \LB{\lambda} \, : \, \mu \text{ is maximal w.r.t. } \leqslant_\mathbb{Q}^\lambda \right\}.
$$
Then the normality of the variety $X_\Pi$ is characterized as follows, where for $\alpha \in \Delta$ we denote by $r_\alpha$ the number of vertices of the
connected component of the Dynkin diagram of $G$ containing $\alpha$ and where for $I \subset \Delta$ we denote by $I^\circ = \{\alpha \in I \, : \, \langle \alpha,
\beta^\vee \rangle = 0 \;\; \forall \beta \in \Delta \smallsetminus I\}$ the \textit{inner part} of $I$.

\begin{teo1} [see Theorem~\ref{teo: caratterizzazione normalita}]
Let $\Pi\subset \mathcal{X}(T)^+$ be a simple subset with maximal element $\lambda$ and suppose that $\langle \lambda , \alpha^\vee \rangle \geqslant r_\alpha$ for every $\alpha
\in \Supp(\lambda) \smallsetminus \Supp(\lambda)^\circ$. Then the variety $X_\Pi$ is a normal compactification of $G$ if and only if $\Pi \supset \LBQ{\lambda}$.
\end{teo1}

The assumption on the coefficients of $\lambda$ in previous theorem involves no loss of generality. If indeed we define $\Pi' = \{\mu + \lambda \, : \, \mu \in
\Pi\}$, then the varieties $X_\Pi$ and $X_{\Pi'}$ are equivariantly isomorphic: rather than $\Pi$, the variety $X_\Pi$ depends only on the set of simple
roots $\Supp(\lambda)$ and on the set of differences $\{\mu - \lambda \, : \, \mu \in \Pi\}$ (see Proposition \ref{prop: criterio tensoriale morfismi}).

Roughly speaking, if $\langle \lambda , \alpha^\vee \rangle \geqslant r_\alpha$ for every $\alpha \in \Supp(\lambda) \smallsetminus \Supp(\lambda)^\circ$, then the set of differences
$\{\mu - \lambda \, : \, \mu \in \LBQ{\lambda}\}$ is constant and depends only on $\Supp(\lambda)$, while otherwise it may happen that $\PQ^+(\lambda)$ ``misses'' some
maximal element of the partial order $\leqslant^\lambda_\mathbb{Q}$. This fact is linked to the problem of the surjectivity of the multiplication map between sections of
globally generated line bundles on the canonical compactification of $G$: while the multiplication is always surjective in the case of an adjoint group
(this was shown by S.~S.~Kannan in \cite{Ka}), the surjectivity of the multiplication may fail in the general case (see Example \ref{ex: controesempio suriettivita} and Proposition \ref{prop: generatori projcoord}).

In the case of an odd orthogonal group, a combinatorial classification of its simple linear compactifications  was given by the first author in \cite{Ga}
by means of a partial order tightly related to $\leqslant^\lambda$, which makes use only of the positive long roots which are non-orthogonal to the dominant weight
$\lambda$. A similar classification should be expectable in the case of any (non simply-laced) semisimple group by using similar partial orders.

While adjoint groups possess many simple smooth compactifications, $\mathrm{Sp}(2r)$ is essentially the unique non-adjoint group possessing such
compactifications. This is the content of the following theorem: together with the characterization of the smoothness given in \cite{BGMR} in the adjoint
case (see Theorem \ref{teo: smoothness caso aggiunto}), it gives a combinatorial characterization of the smoothness of a simple linear group
compactification in the general case.

\begin{teo2} [see Proposition \ref{prop: LB Sp}, Lemma \ref{lem: riduzione G semplice} and Theorem \ref{teo: smoothness caso non aggiunto}]
Let $\Pi \subset \mathcal{X}(T)^+$ be simple with maximal element $\lambda$. If $G_i \subset G$ is a simple normal subgroup and if $T_i = T \cap G_i$, set $\Pi_i =
\{\mu\bigr|_{T_i} \, : \, \mu \in \Pi\}$.
\begin{itemize}
	\item[i)] If $X_\Pi$ is smooth, then $G = G_1 \times \ldots \times G_n$ is a direct product of simple groups and $X_\Pi = X_{\Pi_1} \times \ldots
\times X_{\Pi_n}$, where $X_{\Pi_i}$ is a smooth compactification of $G_i$. Moreover, every $G_i$ is either adjoint or isomorphic to $\mathrm{Sp}(2r_i)$
for some $r_i \geqslant 1$.
	\item[ii)] Suppose that $G = \mathrm{Sp}(2r)$ with $r\geq 1$. Then $X_\Pi$ is a smooth compactification of $G$ if and only if $\Supp(\lambda)$ is
connected, $\alpha_r \in \Supp(\lambda)$ and $\lambda + \omega_{r-1} - \omega_r \in \Pi$ (where we set $\omega_0 = 0$).
\end{itemize}
\end{teo2}

The paper is organized as follows. In Section 1 we introduce the variety $X_\Pi$ and we give some preliminary results: almost all of these results come
from \cite{BGMR} and \cite{Ga} and although some of them are claimed in a more general form than the original ones, the proofs are substantially the same.
In Section 2 we introduce the partial orders $\leqslant^\lambda$ and $\leqslant^\lambda_\mathbb{Q}$. After showing how these orderings arise naturally in the representation
theory of $G$, we show some properties of the elements in $\PQ^+(\lambda)$ which are maximal w.r.t. $\leqslant^\lambda$: these results will be fundamental in the
proofs of the main results of the paper. Finally, in Section 3 we characterize the normality of the variety $X_\Pi$, while in Section 4 we characterize its
smoothness.\\

\textit{Acknowledgements.} We would like to thank A.~Maffei for fruitful conversations on the subject. As well, we would like to thank the referee for his careful reading and for useful suggestions and remarks.

\section{Preliminaries.}

Let $G$ be a connected semisimple algebraic group with Lie algebra $\mathfrak{g}$ and with adjoint group $G_\mathrm{ad}$. Let $B\subset G$ be a Borel subroup and $T\subset
B$ a maximal torus, $B^-$ denotes the opposite Borel subgroup of $B$ w.r.t. $T$. Denote by $\Lambda$ the weight lattice of $G$ and $\Lambda^+\subset \Lambda$ the
semigroup of dominant weights. Denote $\Phi$ the root system associated to $T$ and $\Delta \subset \Phi$ the basis associated to $B$, denote $W$ the Weyl
group of $\Phi$. Denote $\mathcal{X}(T)$ the character lattice of $T$ and set $\Lambda_{\mathrm{rad}} = \mathbb{Z}[\Delta]$ the root lattice, set $\mathcal{X}(T)^+ = \mathcal{X}(T) \cap
\Lambda^+$ and $\mathcal{X}(T)_\mathbb{Q} = \mathcal{X}(T) \otimes \mathbb{Q}$. Denote $\Phi^\vee$ the set of coroots  and $\Delta^\vee \subset \Phi^\vee$ the basis of $\Phi^\vee$
associated to $B$. Denote $\mathcal{X}(T)^\vee$ the cocharacter lattice of $T$, denote $\Lambda^\vee$ the coweight lattice of $G$ and $\Lambda_{\mathrm{rad}}^\vee =
\mathbb{Z}[\Delta^\vee]$ the coroot lattice of $G$. If $V$ is a $G$-module and if $H \subset G$ is a closed subgroup, we denote by $V^{(H)} \subset V$ the subset of
$H$-eigenvectors.

We consider the following partial orders on $\Lambda$:
the \textit{dominance order}, defined by $\nu \leqslant \mu$
if and only if $\mu - \nu \in \mathbb{N}[\Delta]$,
and the \textit{rational dominance order}, defined by $\nu \leqslant_\mathbb{Q} \mu$
if and only if $\mu - \nu \in \mathbb{Q}^+[\Delta]$.
Notice that, if $\nu, \mu \in \Lambda^+$, then $\nu \leqslant_\mathbb{Q} \mu$
if and only if $\det(C) \nu \leqslant \det(C) \mu$,
where $C$ is the Cartan matrix of $\Phi$: indeed $\det(C) \Lambda \subset \mathbb{Z}[\Delta]$,
hence  $\mu - \nu \in \mathbb{Q}^+[\Delta]$ if and only if $\det(C)(\mu-\nu) \in \mathbb{N}[\Delta]$.

If $I \subset \Delta$, define its \emph{border} $\partial{I}$ and its
\emph{interior} $I^\circ$ as follows:
\[ \partial{I} =
\{ \alpha \in \Delta \smallsetminus I \, : \, \exists\, \beta \in I \mathrm{ \; such \,  that \; }
\langle \beta,\alpha^\vee \rangle \neq 0\},\]
\[ I^\circ = I \smallsetminus \partial{(\Delta \smallsetminus I)}.\]
We say that $\alpha \in I$ is an \textit{extremal root} of $I$ if
it is connected to at most one other element of $I$
in the Dynkin diagram of $G$.

If $\lambda \in \Lambda$, define its \emph{support} as follows
$$
	\Supp(\lambda) = \{\alpha \in \Delta \, : \, \langle \lambda, \alpha^\vee \rangle \neq 0 \}.
$$
If $\beta = \sum_{\alpha \in \Delta} n_\alpha \alpha \in \mathbb{Q}[\Delta]$,
define its \textit{support over} $\Delta$ as follows:
$$
	\Supp_\Delta(\beta) = \{ \alpha \in \Delta \, : \, n_\alpha \neq 0 \}.
$$

If $\lambda \in \mathcal{X}(T)^+$, denote $V(\lambda)$ the simple $G$-module
of highest weight $\lambda$. Denote $\Pi(\lambda)$ the set of weights
occurring in $V(\lambda)$: then $\Pi(\lambda) = W \Pi^+(\lambda)$,
where
$$
	\Pi^+(\lambda) = \Pi(\lambda) \cap \mathcal{X}(T)^+ =
	\{ \mu \in \mathcal{X}(T)^+ \, : \, \mu \leqslant \lambda \}.
$$
Similarly, denote $\mathcal{P}(\lambda) \subset \mathcal{X}(T)_\mathbb{Q}$ the convex hull
of $\Pi(\lambda)$ and denote $\PQ(\lambda) = \mathcal{P}(\lambda) \cap \mathcal{X}(T)$:
then $\PQ(\lambda) = W\PQ^+(\lambda)$, where
$$
	\PQ^+(\lambda) = \mathcal{P}(\lambda) \cap \mathcal{X}(T)^+ =
	\{ \mu \in \mathcal{X}(T)^+ \, : \, \mu \leqslant_\mathbb{Q} \lambda \}.
$$

If $\Pi \subset \mathcal{X}(T)^+$, denote $V_\Pi = \bigoplus_{\mu \in \Pi} V(\mu)$ and consider the following variety
$$
	X_\Pi = \overline{(G \times G) [\Id]} \subset \mathbb{P}(\End(V_\Pi)),
$$
which is a compactification of a quotient of $G \simeq G\times G/ \diag(G\times G)$. Notice that,
if $\Id_\mu \in \End(V(\mu))$ is the identity, then we may define $X_\Pi$ as well as follows
$$
	X_\Pi = \overline{(G \times G) [(\Id_\mu)_{\mu \in \Pi}]} \subset \mathbb{P}\Big(\bigoplus_{\mu\in\Pi} \End(V(\mu))\Big):
$$
indeed $\big(V(\lambda) \otimes V(\mu)^*\big)^{\diag (G\times G)}$ is non-zero if and only if $\lambda = \mu$, in which case it is generated by $\Id_\mu$. If
$\Pi = \{\lambda_1, \ldots, \lambda_m\}$, for simplicity sometimes we will denote $X_\Pi$ also by $X_{\lambda_1, \ldots, \lambda_m}$.

\begin{dfn}
Let $\Pi \subset \mathcal{X}(T)^+$ be a set of dominant characters. We say that $X_\Pi$ is \textit{simple} if it contains a unique closed orbit. We say that $X_\Pi$ is
\textit{faithful} (resp. \textit{almost faithful}, \textit{adjoint}) if its open $G\times G$-orbit is isomorphic to $G$ (resp. to a quotient of $G$ by a finite group,
to a quotient of $G_\mathrm{ad}$). If $X_\Pi$ is simple (resp. faithful, almost faithful, adjoint), then we will say also that $\Pi$ is \textit{simple} (resp. \textit{faithful},\textit{almost faithful}, \textit{adjoint}). We say that a weight $\lambda \in \mathcal{X}(T)^+$ is \textit{almost faithful} if $\{\lambda\}$ is almost faithful, namely if
$X_\lambda$ is a compactification of $G_{\mathrm{ad}}$.
\end{dfn}

\begin{prop}[see {\cite[\S 8]{Ti}}]	\label{prop: timashev}
Let $\Pi \subset \mathcal{X}(T)^+$ be a finite subset and denote $\mathcal P(\Pi) \subset \mathcal{X}(T)_\mathbb{Q}$ the polytope generated by the $T$-weights occurring in $V_\Pi$.
\begin{itemize}
	\item[i)] $\Pi$ is faithful if and only if the set $\{\mu - \nu\}_{\mu, \nu \in \Pi}$ generates $\mathcal{X}(T)/\mathbb{Z}[\Delta]$, whereas it is adjoint if and only if $\{\mu - \nu\}_{\mu, \nu \in \Pi} \subset \mathbb{Z}[\Delta]$.
	\item[ii)] If $\mu \in \Pi$, then $X_\Pi$ contains the closed orbit of $\mathbb{P}(\End(V(\mu))$ if and only $\mu$ is a vertex of $\mathcal P(\Pi)$. In particular, $\Pi$ is simple if and only if it contains a unique maximal element w.r.t. $\leqslant_\mathbb{Q}$.
\end{itemize}
\end{prop}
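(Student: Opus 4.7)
My plan is to reduce both parts to a direct analysis of the base point $x_0 := [(\Id_\mu)_{\mu \in \Pi}]$: its stabilizer in $G \times G$ for (i), and the $T \times T$-weight polytope for (ii).

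For (i), I would compute the stabilizer of $x_0$ under the action $(g,h) \cdot \phi = \rho(g) \phi \rho(h)^{-1}$. A pair $(g,h)$ fixes $x_0$ iff $z := g h^{-1}$ acts on every $V(\mu)$, $\mu \in \Pi$, by a single common scalar. Schur's lemma applied to each irreducible $V(\mu)$ forces $z \in Z(G)$, in which case the scalar is $\mu(z)$, and the condition becomes $\mu(z) = \nu(z)$ for all $\mu,\nu \in \Pi$. Thus $\mathrm{Stab}(x_0)/\diag(G) \cong Z_\Pi := \{z \in Z(G) : (\mu - \nu)(z) = 1 \text{ for all } \mu, \nu \in \Pi\}$, and the open orbit identifies with $G/Z_\Pi$. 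Via the identification $\mathcal{X}(Z(G)) = \mathcal{X}(T)/\mathbb{Z}[\Delta]$, faithfulness ($Z_\Pi = \{e\}$) becomes the condition that the differences $\mu - \nu$ generate $\mathcal{X}(T)/\mathbb{Z}[\Delta]$, while adjointness ($Z_\Pi = Z(G)$) becomes the condition $\mu - \nu \in \mathbb{Z}[\Delta]$ for all $\mu, \nu$.

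For (ii), since the summands $\End V(\mu) = V(\mu) \otimes V(\mu)^*$ are pairwise non-isomorphic irreducible $G \times G$-modules, the closed $G \times G$-orbits of $\mathbb{P}(\bigoplus_\mu \End V(\mu))$ are exactly the rank-one orbits $\mathcal{O}_\mu$, one per $\mu \in \Pi$. As each $\mathcal{O}_\mu$ is closed and $G \times G$-invariant, $\mathcal{O}_\mu \subset X_\Pi$ iff $\mathcal{O}_\mu \cap X_\Pi \neq \varnothing$, iff (by Borel's fixed-point theorem) some $T \times T$-fixed point of $\mathcal{O}_\mu$ lies in $X_\Pi$. To analyze these I decompose $\Id_\mu = \sum_{\alpha \in \Pi(\mu)} v_\alpha \otimes v_\alpha^*$ in a weight basis; the $T \times T$-weights appearing in $x_0$ are then $\{(\alpha, -\alpha) : \alpha \in \Pi(\mu), \mu \in \Pi\}$, so the moment polytope of $X_\Pi$ is identified with $\mathcal{P}(\Pi)$ via $\alpha \mapsto (\alpha, -\alpha)$. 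If $\mu \in \Pi$ is a vertex of $\mathcal{P}(\Pi)$, a 1-parameter subgroup of $T \times T$ of the form $(t^\xi, t^{-\xi})$ with $\xi$ in the interior of the normal cone at $\mu$ contracts $x_0$ to the fixed point $[v_\mu \otimes v_\mu^*] \in \mathcal{O}_\mu \cap X_\Pi$, showing $\mathcal{O}_\mu \subset X_\Pi$. Conversely, if $\mathcal{O}_\mu \subset X_\Pi$, then $[v_\mu \otimes v_\mu^*] \in X_\Pi$ is a $T \times T$-fixed point of weight $(\mu, -\mu)$, and the standard correspondence between closed $G\times G$-orbits and vertices of the moment polytope forces $\mu$ to be a vertex of $\mathcal{P}(\Pi)$.

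Finally, the ``in particular'' claim is a convex-geometric observation: for $\mu \in \Pi$, $\mu$ is a vertex of the $W$-invariant polytope $\mathcal{P}(\Pi) = \mathrm{Conv}(W \cdot \Pi)$ iff $\mu$ is $\leqslant_\mathbb{Q}$-maximal in $\Pi$, using that for dominant weights $\nu \leqslant_\mathbb{Q} \mu'$ iff $\nu \in \mathcal{P}(\mu')$. The main technical hurdle is the converse in (ii), namely that every closed orbit of $X_\Pi$ arises from a vertex (rather than an interior point) of the moment polytope; for this I would invoke the Bialynicki--Birula decomposition for projective $G \times G$-varieties, or else appeal to the corresponding statement of Timashev in \cite{Ti}.
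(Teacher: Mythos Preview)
The paper does not supply its own proof of this proposition; it is stated with a bare reference to \cite[\S 8]{Ti}. So there is nothing to compare against, and I evaluate your argument on its own.

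Part (ii) and the ``in particular'' are fine. The one-parameter-subgroup contraction for the forward direction is the standard argument, and deferring the converse to the toric correspondence between $T\times T$-fixed points of $\overline{(T\times T)\cdot x_0}$ and vertices of the weight polytope (or directly to Timashev) is reasonable.

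In part (i) there is a gap. From $(g,h)\cdot x_0 = x_0$ you correctly deduce that $z = gh^{-1}$ acts on each $V(\mu)$ by a single common scalar, but the step ``Schur's lemma \ldots\ forces $z \in Z(G)$'' is not justified: Schur's lemma runs the other way (intertwiners of an irreducible are scalars), and an element acting as a scalar on every $V(\mu)$ need not be central in $G$. Concretely, if $G = G_1 \times G_2$ and every $\mu \in \Pi$ is trivial on $T_2$, then all of $G_2$ acts as the scalar $1$ on $V_\Pi$. What you actually get is that $K_\Pi := \{z \in G : \rho_\Pi(z) \in \Bbbk^*\cdot\Id\}$ is a closed \emph{normal} subgroup of $G$; its identity component is the product of those simple factors of $G$ on which every $\mu \in \Pi$ vanishes, and only the finite quotient $K_\Pi/K_\Pi^\circ$ sits inside $Z(G/K_\Pi^\circ)$. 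Thus your identification $\mathrm{Stab}(x_0)/\diag(G) \cong Z_\Pi \subset Z(G)$ is valid only under the additional hypothesis that $\Pi$ is almost faithful (no simple factor acts trivially) --- precisely the condition singled out in the sentence of the paper immediately following the proposition. You should either add that hypothesis explicitly, or treat the positive-dimensional part of $K_\Pi$ separately before reducing to the central case.
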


In particular it follows that a simple set $\Pi$ with maximal element $\lambda$ is almost faithful if and only if $\Supp(\lambda) \cap \Delta' \neq \varnothing$ for every connected component $\Delta' \subset \Delta$. Notice also that $\Pi \subset \mathcal{X}(T)^+$ is simple and adjoint if and only if it possesses a unique maximal element w.r.t. $\leqslant$.
For instance, if $\lambda\in \mathcal{X}(T)^+$, then the set $\PQ^+(\lambda)$ is simple,
whereas $\Pi^+(\lambda)$ is simple and adjoint.
If moreover $\lambda$ is almost faithful,
then $\PQ^+(\lambda)$ is also faithful provided that the non-zero coefficients
of $\lambda$ are big enough (see Theorem \ref{teo: descrizione normalizzazione}).

We now do some recalls from the theory of the embeddings of a semisimple algebraic group, which we regard as a special case of the general theory of
spherical embeddings developed by Luna and Vust (see \cite{Kn} and \cite{Ti}). Given a semisimple group $G$, recall that a normal $G$-variety is called
\textit{spherical} if it possesses an open orbit for some Borel subgroup of $G$. We are here interested in the case of the group $G$ itself regarded as a
$G\times G$-homogeneous space: the open cell $B B^-$ is then an orbit for the Borel subgroup $B \times B^-$, so that we may regard $G$ as a spherical
$G\times G$-variety.

Let $X$ be a simple and complete normal embedding of $G$. Denote $\mathcal{D}(G)$ the set of $B\times B^{-}$-stable prime divisors of $G$ and $\mathcal{D}(X)\subset
\mathcal{D}(G)$ the set of divisors whose closure in $X$ contains the closed $G\times G$-orbit: then the Picard group $\Pic(X)$ is free and it is generated by
the classes of divisors in $\mathcal{D}(G)\smallsetminus \mathcal{D}(X)$ (see \cite[Proposition~2.2]{Br1}), while the class group $\mathrm{Cl}(X)$ is generated by the classes
of $B\times B^{-}$-stable prime divisors of $X$ (see \cite[section~2.2]{Br1}). Denote $\mathcal{N}(X)$ the set of $G\times G$-stable prime divisors of $X$, so
that the set of $B\times B^{-}$-stable prime divisors of $X$ is identified with $\mathcal{D}(G)\cup \mathcal{N}(X)$.

Notice that $\Bbbk(G)^{(B\times B^{-})}/\Bbbk^{*} \simeq \mathcal{X}(T)$ and consider the natural map $\rho : \mathcal{D}(G)\cup \mathcal{N}(X) \longrightarrow \mathcal{X}(T)^\vee$ defined by associating to a $B\times B^-$-stable prime divisor $D$ the cocharacter associated to the rational discrete valuation induced by $D$. If $D\in \mathcal{N}(X)$, then $\rho(D)$ is a negative multiple  of a fundamental coweight, while if $D\in \mathcal{D}(G)$, then $\rho(D)$ is a simple coroot; moreover $\rho$ is injective and $\rho(\mathcal{D}(G))=\Delta^\vee$ (see \cite[\S~7]{Ti}).

Denote $\mathcal{C}(X) \subset \mathcal{X}(T)^\vee_{\mathbb{Q}}$ the convex cone generated by $\rho\big(\mathcal{D}(X)\cup \mathcal{N}(X)\big)$; by the general theory of spherical embeddings we have that $\mathcal{C}(X)$ is generated by $\rho(\mathcal{D}(X))$ together with the negative Weyl chamber of $\Phi$ (see \cite[Theorem 3 and Corollary of Proposition 4]{Ti}). By definition, the \textit{colored cone} of $X$ is the couple $\big(\mathcal{C}(X),\mathcal{D}(X)\big)$: up to equivariant isomorphisms, it uniquely determines $X$ as a $G\times G$-compactification of $G$ (see \cite[Theorem 2]{Ti}).

In the case of our interest, let $\lambda \in \mathcal{X}(T)^+$ and denote $\widetilde X_\lambda \longrightarrow X_\lambda$ the normalization of $X_\lambda$ in $\Bbbk(G)$: this variety depends only on the set of simple roots $\Supp(\lambda)$ (see \cite[Proposition 1.2]{BGMR}) and we have $\rho(\mathcal{D}(\widetilde X_\lambda)) = \Delta^\vee \smallsetminus \Supp(\lambda)^\vee$ (see \cite[Theorem~7]{Ti}). When $\lambda$ is regular $\widetilde X_\lambda$ is called the \textit{canonical compactification} of $G$ and we will denote it by $M$. In case $G$ is adjoint, then $M = M_\mathrm{ad}$ is the \textit{wonderful compactification} of $G_\mathrm{ad}$ considered by De~Concini and Procesi in \cite{CP} in the more general context of adjoint symmetric varieties. By \cite{CP} we have that $M_{\mathrm{ad}}$ coincides with the variety $X_\lambda$ whenever $\lambda$ is a regular dominant weight, so that $M$ is the normalization of $M_\mathrm{ad}$ in $\Bbbk(G)$. Following the general theory of spherical varieties (see \cite[Theorem 5.1]{Kn} and \cite[Proposition 1]{Ti}), $M$ dominates every simple linear compactification of a quotient of $G$.

The closed orbit of $M$ is isomorphic to $G/B \times G/B$ and the restriction of line bundles determines an embedding of $\Pic(M)$ into $\Pic(G/B \times G/B)$, that we identify with $\Lambda \times \Lambda $, which identifies $\Pic(M)$ with the set of weights of the form $(\lambda,\lambda^*)$. Therefore $\Pic(M)$ is identified with $\Lambda$ and we denote by $\mathcal{M}_{\lambda} \in \Pic(M)$ the line bundle whose restriction to $G/B \times G/B$ is isomorphic to $\mathcal{L}_\lambda \boxtimes \mathcal{L}_{\lambda^*}$.

Since $M$ possesses an open $B \times B^-$-orbit, it follows that the $G\times G$-module $\Gamma(M,\mathcal{M}_\lambda)$ is multiplicity free. Let $\Pi \subset \mathcal{X}(T)^+$ be a simple set with maximal element $\lambda$. Then the map $G \longrightarrow \mathbb{P}(\End(V_\Pi))$ extends to a map $M \longrightarrow \mathbb{P}(\End(V_\Pi))$ whose image is $X_\Pi$. Moreover $\mathcal{M}_\lambda$ is the pullback of $\mathcal{O}(1)$, and if we pull back the homogeneous coordinates of $\mathbb{P}(\End(V_\Pi))$ to $M$ we get a submodule of $\Gamma(M,\mathcal{M}_\lambda)$ which is isomorphic to $\bigoplus_{\mu\in\Pi}\End(V(\mu))^*$. In particular, if $\mu \in \Pi^+_G(\lambda)$, then $\Gamma(M,\mathcal{M}_\lambda)$ possesses a unique submodule isomorphic to $\End(V(\mu))^*$, and by abuse of notation we will still denote it by $\End(V(\mu))^*$. Conversely, for $\lambda \in \mathcal{X}(T)^+$, if we consider the projective map associated to the complete linear system of $\mathcal{M}_\lambda$ we get a linear compactification of a quotient of $G$, say $X_\Pi$, which is simple since $M$ is so, and by Proposition \ref{prop: timashev} it follows that $\Pi$ is simple with maximal element $\lambda$. Therefore, as $G\times G$-module, the space of sections of $\mathcal{M}_\lambda$ decomposes as follows:
\[
	\Gamma(M,\mathcal{M}_\lambda)  \simeq 	\bigoplus_{\mu \in \PQ^+(\lambda)} \End(V(\mu))^*.
\]

Consider the graded algebra $\widetilde A(\lambda) = \bigoplus_{n=0}^\infty \widetilde{A}_n(\lambda)$, where $\widetilde{A}_n(\lambda)= \Gamma(M,\mathcal{M}_{n\lambda})$. By identifying sections of line bundles on $M$ with functions on $G$, it is possible to describe the multiplication in $\widetilde A(\lambda)$ in the following way.

\begin{lem}[{\cite[Lemma~3.1]{Ka} or \cite[Lemma~3.4]{DC}}] \label{lem: coefficientimatriciali}
Let $\lambda,\mu \in \mathcal{X}(T)^+$ and let $\lambda' \in \PQ^+(\lambda)$ and $\mu'\in \PQ^+(\mu)$. Then the image of $\End(V(\lambda'))^* \times \End(V(\mu'))^* \subset \Gamma(M, \mathcal{M}_{\lambda}) \times \Gamma(M, \mathcal{M}_{\mu})$ in $\Gamma(M, \mathcal{M}_{\lambda + \mu})$ via the multiplication map is
\[
	\bigoplus_{V(\nu)\subset V(\lambda')\otimes V(\mu')} \End(V(\nu))^*.
\]
\end{lem}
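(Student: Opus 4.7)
The plan is to identify sections of $\mathcal{M}_\lambda$ on $M$ with regular functions on $G$, so that the multiplication in the graded section algebra becomes the pointwise product of matrix coefficients, and the statement reduces to a direct computation with tensor decompositions.

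First I would restrict sections to the open orbit $G \subset M$ and trivialize $\mathcal{M}_\lambda$ there, obtaining an injective $G \times G$-equivariant map $\Gamma(M, \mathcal{M}_\lambda) \hookrightarrow \Bbbk[G]$ which sends the summand $\End(V(\mu))^*$ isomorphically onto the space of matrix coefficients of $V(\mu)$, namely the image of the natural injection $V(\mu) \otimes V(\mu)^* \hookrightarrow \Bbbk[G]$ given by $v \otimes \phi \mapsto \bigl(g \mapsto \phi(g^{-1} v)\bigr)$. Under this identification the multiplication in $\widetilde{A}(\lambda)$ becomes the pointwise product of functions, and the inclusion $\subseteq$ in the lemma is then immediate: the product $c_{v,\phi} \cdot c_{u,\psi}$ with $v \in V(\lambda')$, $u \in V(\mu')$ equals the matrix coefficient $c_{v \otimes u,\, \phi \otimes \psi}$ of $V(\lambda') \otimes V(\mu')$, and decomposing this tensor product into simples $\bigoplus_\nu V(\nu)^{\oplus m_\nu}$ and invoking linear independence of matrix coefficients of non-isomorphic simple modules identifies the span of all such products with $\bigoplus_{\nu \,:\, m_\nu > 0} \End(V(\nu))^*$.

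For the reverse inclusion, I would use that $\Gamma(M,\mathcal{M}_{\lambda+\mu})$ is multiplicity-free (because $M$ admits an open $B \times B^-$-orbit), so the image, being a $G \times G$-submodule, decomposes as a sum of some of the isotypic components $\End(V(\nu))^*$; it then suffices to produce one non-zero element of the image in each such component with $V(\nu) \subset V(\lambda') \otimes V(\mu')$. Fixing such an embedding, let $p \colon V(\lambda') \otimes V(\mu') \twoheadrightarrow V(\nu)$ be a complementary equivariant projection. For any $v_\nu \in V(\nu)$, $\phi_\nu \in V(\nu)^*$, writing a lift $\tilde v = \sum_i v_i \otimes u_i$ with $p(\tilde v) = v_\nu$ and expressing $p^*(\phi_\nu) = \sum_j \phi_j \otimes \psi_j$, a direct computation gives
\[
	c_{v_\nu,\phi_\nu}(g) \;=\; \phi_\nu(g^{-1} v_\nu) \;=\; \sum_{i,j} c_{v_i,\phi_j}(g) \cdot c_{u_i,\psi_j}(g),
\]
realizing $c_{v_\nu,\phi_\nu}$ as a product of matrix coefficients of $V(\lambda')$ and $V(\mu')$. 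Choosing $v_\nu$ and $\phi_\nu$ so that $c_{v_\nu,\phi_\nu}$ is non-zero (for instance a highest weight vector and its dual covector) exhibits a non-zero element of the image in $\End(V(\nu))^*$, and $G \times G$-equivariance forces the whole isotypic component to lie in the image.

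The only real obstacle is the bookkeeping in the non-vanishing step; everything else is a direct translation between the section algebra on $M$ and the Peter-Weyl decomposition of $\Bbbk[G]$.
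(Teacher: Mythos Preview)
The paper does not give its own proof of this lemma; it merely cites \cite[Lemma~3.1]{Ka} and \cite[Lemma~3.4]{DC}, after remarking that ``by identifying sections of line bundles on $M$ with functions on $G$, it is possible to describe the multiplication in $\widetilde A(\lambda)$'' --- which is exactly the identification your argument is built on. Your proposal is the standard proof one finds in those references, and it is correct; the only quibble is that in your reverse-inclusion step you say you are ``realizing $c_{v_\nu,\phi_\nu}$ as a product'' when your displayed formula actually exhibits it as a \emph{sum} of products, i.e.\ as an element of the image of the linear multiplication map from the tensor product --- but that is precisely what is required.
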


Denote $A(\Pi)$ the homogeneous coordinate ring of $X_\Pi \subset \mathbb{P}(\End(V_\Pi))$. As a subalgebra of $\widetilde{A}(\lambda)$, the algebra $A(\Pi)$ is generated
by $\bigoplus_{\mu \in \Pi} \End(V(\mu))^* \subset \Gamma(M,\mathcal{M}_\lambda)$ and it inherits a grading defined by $A_n(\Pi) = A(\Pi) \cap A_n(\lambda)$. Denote
$\phi_\lambda \in \End(V_\lambda)$ a highest weight vector and consider the $B\times B^-$-stable affine open subsets $X_\lambda^\circ \subset X_\lambda$ and
$X_\Pi^\circ \subset X_\Pi$ defined by the non-vanishing of $\phi_\lambda$. The coordinate rings of previous affine sets are described as follows:
$$
 \Bbbk[X_\lambda^\circ] = \left\{ \frac{\phi}{\phi_\lambda^n} \, : \, n \in \mathbb{N}, \; \phi \in A_n(\lambda) \right\}, \quad \qquad
 \Bbbk[X_\Pi^\circ] = \left\{ \frac{\phi}{\phi_\lambda^n} \, : \, n \in \mathbb{N}, \; \phi \in A_n(\Pi) \right\}.
$$
and the natural morphism $X_\Pi \longrightarrow X_\lambda$ induces a natural inclusion $\Bbbk[X_\lambda^\circ] \subset \Bbbk[X_\Pi^\circ]$. Previous rings are not $G\times
G$-modules, however they are $\mathfrak{g}\oplus \mathfrak{g}$-modules.

Following lemma and proposition were given in \cite{Ga} in the case of a simple linear adjoint compactification, however their proofs generalize
straightforwardly to the non-adjoint case.

\begin{lem} [{\cite[Lemma~1.3]{Ga}}] \label{lem: generatori X_Pi}
Let $\Pi\subset \mathcal{X}(T)^+$ be simple with maximal element $\lambda$.
As a $\mathfrak{g} \oplus \mathfrak{g}$-algebra, $\Bbbk[X^\circ_\Pi]$ is generated by $\Bbbk[X^\circ_{\lambda}]$ together with the set $\{\phi_\mu/\phi_\lambda\}_{\mu \in \Pi}$.
\end{lem}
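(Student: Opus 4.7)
The plan is to reduce the statement to showing, for each $\mu \in \Pi$, that every quotient $f/\phi_\lambda$ with $f \in \End(V(\mu))^*$ lies in the $\mathfrak{g} \oplus \mathfrak{g}$-subalgebra $B \subseteq \Bbbk[X^\circ_\Pi]$ generated by $\Bbbk[X^\circ_\lambda]$ together with $\phi_\mu/\phi_\lambda$. This reduction is immediate: since $A(\Pi)$ is generated as a graded $\Bbbk$-algebra by $A_1(\Pi) = \bigoplus_{\mu \in \Pi} \End(V(\mu))^*$, every $\phi \in A_n(\Pi)$ is a sum of products $f_1 \cdots f_n$ with $f_i \in A_1(\Pi)$, so every element $\phi/\phi_\lambda^n$ of $\Bbbk[X^\circ_\Pi]$ is a sum of products $(f_1/\phi_\lambda) \cdots (f_n/\phi_\lambda)$.

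The main ingredient is the Leibniz rule for the derivation action of $\mathfrak{g} \oplus \mathfrak{g}$ on $\Bbbk[X^\circ_\Pi]$: for every $X \in \mathfrak{g} \oplus \mathfrak{g}$ and every $g \in A_1(\Pi)$ one has
\[
	\frac{X(g)}{\phi_\lambda} \;=\; X\!\left(\frac{g}{\phi_\lambda}\right) + \frac{g}{\phi_\lambda} \cdot \frac{X(\phi_\lambda)}{\phi_\lambda}.
\]
Since $\End(V(\lambda))^* \subset \Gamma(M, \mathcal{M}_\lambda)$ is an irreducible $G \times G$-submodule containing $\phi_\lambda$, the element $X(\phi_\lambda)$ lies in $\End(V(\lambda))^* \subset A_1(\lambda)$; hence the factor $X(\phi_\lambda)/\phi_\lambda$ on the right-hand side belongs to $\Bbbk[X^\circ_\lambda]$, and in particular to $B$.

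Fixing $\mu \in \Pi$, I then prove by induction on $k \geq 0$ that $(X_1 \cdots X_k \phi_\mu)/\phi_\lambda \in B$ for every $X_1, \ldots, X_k \in \mathfrak{g} \oplus \mathfrak{g}$: the case $k = 0$ is the generator $\phi_\mu/\phi_\lambda$, while the inductive step uses the displayed identity with $X = X_1$ and $g = X_2 \cdots X_k \phi_\mu \in \End(V(\mu))^*$, noting that both summands on the right lie in $B$ (the first because $B$ is stable under $\mathfrak{g} \oplus \mathfrak{g}$ and by the inductive hypothesis, the second as a product of an element of $B$ by one of $\Bbbk[X^\circ_\lambda] \subseteq B$). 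Since $\End(V(\mu))^*$ is irreducible as $G \times G$-module and therefore spanned by the vectors $X_1 \cdots X_k \phi_\mu$ with $k \geq 0$ and $X_i \in \mathfrak{g} \oplus \mathfrak{g}$, the conclusion follows by linearity. The only delicate point in the argument is the verification of the derivation identity itself.
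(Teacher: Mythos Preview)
Your argument is correct. The Leibniz identity you display is the standard quotient-rule computation for a derivation (using $X(1/\phi_\lambda)=-X(\phi_\lambda)/\phi_\lambda^2$), and the remaining steps---the reduction to degree one, the induction on $k$, and the appeal to irreducibility of $\End(V(\mu))^*$ as a $G\times G$-module---are all sound. Note only that the paper itself does not prove this lemma: it merely cites \cite[Lemma~1.3]{Ga} and remarks that the adjoint-case proof there carries over verbatim. Your proof is the natural (and presumably the cited) one.
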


\begin{prop}[{\cite[Prop.~1.6 and Cor.~1.10]{Ga}}]   \label{prop: criterio tensoriale morfismi}
Let $\Pi, \Pi'\subset \mathcal{X}(T)^+$ be simple subsets with maximal elements
resp. $\lambda$ and $\lambda'$ and suppose that $\Supp(\lambda) = \Supp(\lambda')$.
\begin{itemize}
	\item[i)] There exists a $G \times G$-equivariant morphism $X_\Pi \longrightarrow X_{\Pi'}$ if and only if, for every $\mu' \in \Pi'$, there exist $n \in \mathbb{N}$
and $\mu_1, \ldots, \mu_n \in \Pi$ such that $V\big(\mu' - \lambda' + n \lambda \big) \subset V(\mu_1) \otimes \ldots \otimes V(\mu_n)$.
	\item[ii)]  If $\{\mu - \lambda \}_{\mu \in \Pi} = \{\mu' - \lambda'\}_{\mu' \in \Pi'}$,
then $X_\Pi \simeq X_{\Pi'}$ as $G \times G$-varieties.
\end{itemize}
\end{prop}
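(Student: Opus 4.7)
The plan is to reduce the existence of an equivariant morphism $X_\Pi \to X_{\Pi'}$ to an inclusion of coordinate rings on the $B\times B^-$-stable affine opens $X_\Pi^\circ \subset X_\Pi$ and $X_{\Pi'}^\circ \subset X_{\Pi'}$, and then to translate that inclusion into the tensor-product criterion via Lemma~\ref{lem: coefficientimatriciali}.

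First I would set up the geometric reduction. Both $X_\Pi$ and $X_{\Pi'}$ are simple $G\times G$-compactifications of a common quotient of $G$, and both are dominated by the canonical compactification $M$ (which depends only on $\Supp(\lambda)=\Supp(\lambda')$). The affine pieces $X_\Pi^\circ$ and $X_{\Pi'}^\circ$, defined by the non-vanishing of $\phi_\lambda$ and $\phi_{\lambda'}$, are $B\times B^-$-stable neighborhoods of the unique $B\times B^-$-fixed point of the (single) closed orbit. Any equivariant morphism $f: X_\Pi \to X_{\Pi'}$ must preserve the closed orbit and the $B\times B^-$-fixed point, hence restricts to $X_\Pi^\circ \to X_{\Pi'}^\circ$, equivalently an inclusion $\Bbbk[X_{\Pi'}^\circ] \subset \Bbbk[X_\Pi^\circ]$ inside the common function field $\Bbbk(G)$; conversely, such an inclusion of $\mathfrak{g}\oplus\mathfrak{g}$-stable subrings extends by $G\times G$-translation to a morphism $X_\Pi \to X_{\Pi'}$, both varieties being covered by $G\times G$-translates of their affine pieces.

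Next I would compare the two rings via their $G\times G$-module structure. Because $\Bbbk[X_{\Pi'}^\circ]$ is $\mathfrak{g}\oplus\mathfrak{g}$-stable and the graded algebra $A(\Pi')$ is generated as a $G\times G$-module by $\bigoplus_{\mu'\in\Pi'}\End(V(\mu'))^*$, the inclusion reduces to checking that each $B\times B^-$-eigenfunction $\phi_{\mu'}/\phi_{\lambda'}$, for $\mu'\in\Pi'$, lies in $\Bbbk[X_\Pi^\circ]$. The key identification is then that $\phi_{\mu'}/\phi_{\lambda'}$ is a $B\times B^-$-eigenfunction on $G$ of weight $\mu'-\lambda'$, and for $n$ sufficiently large the weight $\nu:=\mu'-\lambda'+n\lambda$ is dominant (thanks to $\Supp(\lambda)=\Supp(\lambda')$); since $B\times B^-$-eigenfunctions on $G$ of a given weight are unique up to scalar, one obtains $\phi_{\mu'}/\phi_{\lambda'} = c \cdot \phi_\nu/\phi_\lambda^n$ for some $c\in\Bbbk^*$. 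The final step is the explicit description of $A_n(\Pi)$: iterating Lemma~\ref{lem: coefficientimatriciali} yields that $A_n(\Pi)=\bigoplus_\nu \End(V(\nu))^*$, where $\nu$ ranges over the dominant weights appearing as highest weights of some $V(\mu_1)\otimes\cdots\otimes V(\mu_n)$ with $\mu_i\in\Pi$. Thus $\phi_\nu \in A_n(\Pi)$ precisely when $V(\nu) \subset V(\mu_1)\otimes\cdots\otimes V(\mu_n)$ for some $\mu_i\in\Pi$, giving (i).

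Part (ii) then follows at once from (i): given $\mu'\in\Pi'$, pick $\mu\in\Pi$ with $\mu-\lambda=\mu'-\lambda'$; then $V(\mu'-\lambda'+\lambda)=V(\mu)\subset V(\mu)$ satisfies the criterion with $n=1$, yielding a morphism $X_\Pi\to X_{\Pi'}$, and symmetrically $X_{\Pi'}\to X_\Pi$; both restrict to the identity on the common dense $G\times G$-orbit, hence are mutually inverse. I expect the trickiest point to be the identification $\phi_{\mu'}/\phi_{\lambda'}=c\cdot\phi_\nu/\phi_\lambda^n$: this requires choosing $n$ large enough that $\nu$ is dominant (so that $\phi_\nu$ exists and is non-zero) and invoking the one-dimensionality of the space of $B\times B^-$-eigenfunctions on $G$ of a fixed weight; once this bridge is built, the rest of the argument is a direct back-and-forth translation between the algebraic inclusion $\Bbbk[X_{\Pi'}^\circ] \subset \Bbbk[X_\Pi^\circ]$ and the representation-theoretic tensor-product condition.
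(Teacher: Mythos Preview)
The paper does not supply its own proof of this proposition: it is quoted from \cite{Ga} (stated there for adjoint compactifications) with the remark that the argument carries over unchanged to the non-adjoint setting. Your outline follows exactly the expected strategy---reduce to an inclusion of the affine coordinate rings $\Bbbk[X_{\Pi'}^\circ]\subset\Bbbk[X_\Pi^\circ]$, then translate via Lemma~\ref{lem: coefficientimatriciali} and the uniqueness of $B\times B^-$-eigenfunctions---and is essentially correct.

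One step in your reduction needs more care, however. You claim that the inclusion $\Bbbk[X_{\Pi'}^\circ]\subset\Bbbk[X_\Pi^\circ]$ reduces, by $\mathfrak{g}\oplus\mathfrak{g}$-stability and the fact that $A(\Pi')$ is generated in degree one, to checking $\phi_{\mu'}/\phi_{\lambda'}\in\Bbbk[X_\Pi^\circ]$ for each $\mu'\in\Pi'$. The precise tool here is Lemma~\ref{lem: generatori X_Pi}: $\Bbbk[X_{\Pi'}^\circ]$ is generated \emph{as a $\mathfrak{g}\oplus\mathfrak{g}$-algebra} by $\Bbbk[X_{\lambda'}^\circ]$ together with the functions $\phi_{\mu'}/\phi_{\lambda'}$. (Note that the map $\psi\mapsto\psi/\phi_{\lambda'}$ from $\End(V(\mu'))^*$ into $\Bbbk(G)$ is not $\mathfrak{g}\oplus\mathfrak{g}$-equivariant for the derivation action, so containing $\phi_{\mu'}/\phi_{\lambda'}$ does not by itself force containment of all of $\End(V(\mu'))^*/\phi_{\lambda'}$; this is exactly why Lemma~\ref{lem: generatori X_Pi} is non-trivial.) Consequently you must also verify $\Bbbk[X_{\lambda'}^\circ]\subset\Bbbk[X_\Pi^\circ]$, and this is where the hypothesis $\Supp(\lambda)=\Supp(\lambda')$ is used in an essential way, not merely to make $\mu'-\lambda'+n\lambda$ dominant. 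The missing step is short: choose $m$ with $m\lambda-\lambda'\in\mathcal{X}(T)^+$; if $V(\nu)\subset V(\lambda')^{\otimes n}$ then iterated use of Corollary~\ref{cor: traslazione} gives $V\big(\nu+n(m\lambda-\lambda')\big)\subset V(m\lambda)^{\otimes n}\subset V(\lambda)^{\otimes mn}$, so that (comparing $B\times B^-$-weights) $\phi_\nu/\phi_{\lambda'}^n$ coincides up to scalar with $\phi_{\nu-n\lambda'+mn\lambda}/\phi_\lambda^{mn}\in\Bbbk[X_\lambda^\circ]\subset\Bbbk[X_\Pi^\circ]$. With this addition your argument for (i) is complete, and your deduction of (ii) from (i) is fine.
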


We now recall the criterion of normality for a simple
adjoint compactification given in \cite{BGMR}.
Let $\Pi \subset \mathcal{X}(T)^+$ be simple and adjoint with maximal element $\lambda$.
Then $\Pi \subset \Pi^+(\lambda)$ and we have a natural projection
$X_\Pi^+(\lambda) \longrightarrow X_\Pi$.
While Kannan shown in \cite{Ka} that $X_\Pi^+(\lambda)$ is a projectively normal variety,
De~Concini proved in \cite{DC} that $X_{\Pi^+(\lambda)} \longrightarrow X_\Pi$
is the normalization of $X_\Pi$.
Hence we deduce by Lemma \ref{prop: criterio tensoriale morfismi}
the following characterization of the normality of $X_\Pi$.

\begin{prop}[{\cite[Prop. 2.3]{BGMR}}]	\label{prop: criterio tensoriale normalità aggiunto}
Let $\Pi \subset \mathcal{X}(T)^+$ be simple and adjoint with maximal element $\lambda$.
Then the variety $X_\Pi$ is normal if and only if for every $\mu \in \Pi^+(\lambda)$
they exist $n \in \mathbb{N}$ and $\lambda_1,\ldots,\lambda_n \in \Pi$ such that
\[ V(\mu + (n-1)\lambda) \subset V(\lambda_1) \otimes \cdots \otimes V(\lambda_n). \]
\end{prop}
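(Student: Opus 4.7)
The plan is to leverage the two results recalled just before the statement, combined with the criterion for existence of equivariant morphisms between simple adjoint compactifications.

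First I would recall the setup: by the theorem of Kannan the variety $X_{\Pi^+(\lambda)}$ is projectively normal, and by De~Concini the natural projection $X_{\Pi^+(\lambda)} \longrightarrow X_\Pi$ is the normalization map. In particular $X_{\Pi^+(\lambda)}$ is a normal $G\times G$-compactification of the same quotient of $G$ as $X_\Pi$, and the normalization morphism is $G \times G$-equivariant, finite and birational. Consequently $X_\Pi$ is normal if and only if this normalization morphism is an isomorphism, which in turn happens if and only if there exists a $G\times G$-equivariant morphism $X_\Pi \longrightarrow X_{\Pi^+(\lambda)}$ (whose composition with the normalization would then be the identity, by Zariski's main theorem or simply by uniqueness of equivariant morphisms between compactifications of the same homogeneous space).

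Next I would apply Proposition~\ref{prop: criterio tensoriale morfismi}(i) with $\Pi' = \Pi^+(\lambda)$ and maximal element $\lambda' = \lambda$. The hypothesis $\Supp(\lambda) = \Supp(\lambda')$ is trivially satisfied, so existence of the equivariant morphism $X_\Pi \longrightarrow X_{\Pi^+(\lambda)}$ is equivalent to the condition that for every $\mu \in \Pi^+(\lambda)$ there exist $n \in \mathbb{N}$ and $\lambda_1, \ldots, \lambda_n \in \Pi$ such that
\[
	V\bigl(\mu - \lambda + n \lambda\bigr) = V\bigl(\mu + (n-1)\lambda\bigr) \subset V(\lambda_1) \otimes \cdots \otimes V(\lambda_n).
\]
Combining the two equivalences yields precisely the statement.

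There is no real obstacle here: the result is essentially a direct translation, via the morphism criterion of Proposition~\ref{prop: criterio tensoriale morfismi}, of the known description of the normalization of $X_\Pi$ as $X_{\Pi^+(\lambda)}$. The only point that deserves a brief mention is that an equivariant morphism $X_\Pi \to X_{\Pi^+(\lambda)}$, when it exists, automatically splits the normalization map, because both varieties are complete $G\times G$-compactifications of the same $G\times G$-orbit $G \simeq (G\times G)[\Id]$ and any two such equivariant maps that agree on the open orbit must coincide.
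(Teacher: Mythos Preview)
Your argument is correct and is essentially the same as the paper's: the proposition is stated there as an immediate consequence of Kannan's projective normality, De~Concini's identification of the normalization as $X_{\Pi^+(\lambda)}$, and the morphism criterion of Proposition~\ref{prop: criterio tensoriale morfismi}. The paper gives no further details beyond that one-line deduction, so your write-up simply unpacks what it leaves implicit.
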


The following definition allows to make effective previous characterization.

\begin{dfn} \label{def: twin}
If $\Delta' \subset \Delta$ is a non-simply laced connected component, order the simple roots in $\Delta'= \{ \alpha_1, \ldots, \alpha_r\}$ starting from the
extremal long root and denote $\alpha_q$ the first short root in $\Delta'$. Let $\lambda \in \Lambda^+$ be such that $\alpha_q\not \in \Supp(\lambda)$ and such that
$\Supp(\lambda)\cap \Delta'$ contains a long root, denote $\alpha_p$ the last long root which occurs in $\Supp(\lambda)\cap \Delta'$. For instance, if $\Delta'$ is not
of type $\mathsf{G}_2$, then the numbering is as follows:
\[
\begin{picture}(9000,1800)(2000,-900)
           \put(0,0){\multiput(0,0)(3600,0){2}{\circle*{150}}\thicklines\multiput(0,0)(2500,0){2}{\line(1,0){1100}}\multiput(1300,0)(400,0){3}{\line(1,0){200}}}
           \put(3600,0){\multiput(0,0)(3600,0){2}{\circle*{150}}\thicklines\multiput(0,0)(2500,0){2}{\line(1,0){1100}}\multiput(1300,0)(400,0){3}{\line(1,0){200}}}
           \put(7200,0){\multiput(0,0)(1800,0){2}{\circle*{150}}\thicklines\multiput(0,-60)(0,150){2}{\line(1,0){1800}}\multiput(1050,0)(-25,25){10}{\circle*{50}}\multiput(1050,0)(-25,-25){10}{\circle*{50}}}
           \put(9000,0){\multiput(0,0)(3600,0){2}{\circle*{150}}\thicklines\multiput(0,0)(2500,0){2}{\line(1,0){1100}}\multiput(1300,0)(400,0){3}{\line(1,0){200}}}
           \put(-150,-700){\tiny $\alpha_1$}
           \put(3450,-700){\tiny $\alpha_p$}
           \put(8850,-700){\tiny $\alpha_q$}
           \put(12450,-700){\tiny $\alpha_r$}
\end{picture}
\]
The \textit{adjoint little brother} of $\lambda$ with respect to $\Delta'$ is the dominant weight
\[
\lambda_{\Delta'}^\mathrm{lb} = \lambda - \sum_{i=p}^q \alpha_i =
\left\{ \begin{array}{ll}
		\lambda-\omega_1+\omega_2 & \textrm{ if $G$ is of type $\sf{G}_2$} \\
		\lambda + \omega_{p-1} - \omega_{p}  + \omega_{q+1} & \textrm{ otherwise}
\end{array} \right.
\]
where $\omega_i$ is the fundamental weight associated to $\alpha_i$ if $1\leq i \leqslant r$, while $\omega_0 = \omega_{r+1} = 0$.
\end{dfn}

The set of adjoint little brothers of
$\lambda$ will be denoted by $\LB{\lambda}$,
while we set $\LBb{\lambda} = \LB{\lambda} \cup \{\lambda\}$.
Notice that if $G$ is simply laced then $\LB{\lambda} = \varnothing$ for every $\lambda \in \mathcal{X}(T)^+$.

\begin{teo}[{\cite[Thm.~2.10]{BGMR}}]	\label{teo: normalita caso aggiunto}
Let $\Pi \subset \mathcal{X}(T)^+$ be simple and adjoint with maximal element $\lambda$.
Then the variety $X_\Pi$ is normal if and only if $\Pi \supset \LB{\lambda}$.
\end{teo}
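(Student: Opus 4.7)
The plan is to translate the tensor-product criterion of Proposition \ref{prop: criterio tensoriale normalità aggiunto} into the combinatorial statement $\Pi \supset \LB{\lambda}$ by handling the two implications separately.

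\emph{Necessity.} Assume $X_\Pi$ is normal and let $\mu = \lambda^\mathrm{lb}_{\Delta'} = \lambda-\sum_{j=p}^q\alpha_j$ be an adjoint little brother. Applying Proposition \ref{prop: criterio tensoriale normalità aggiunto} to $\mu$, there exist $\lambda_1,\ldots,\lambda_n\in\Pi$ with $V(\mu+(n-1)\lambda)\subset V(\lambda_1)\otimes\cdots\otimes V(\lambda_n)$. Writing $\lambda_i=\lambda-\eta_i$ with $\eta_i\in\mathbb{N}[\Delta]$ and comparing highest weights yields
\[
\sum_{i=1}^n \eta_i \;\leqslant\; \lambda - \mu \;=\; \sum_{j=p}^q\alpha_j,
\]
so each $\eta_i$ is supported on $\{\alpha_p,\ldots,\alpha_q\}$ with $\{0,1\}$-coefficients. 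The key combinatorial lemma, and the main technical step, is that the only $\eta_i\neq 0$ for which $\lambda-\eta_i$ is again dominant is $\eta_i=\sum_{j=p}^q\alpha_j$, i.e.\ $\lambda-\eta_i=\mu$; this follows from a type-by-type analysis on the non-simply-laced component $\Delta'$, using the dominance inequalities $\langle\lambda-\eta_i,\alpha^\vee\rangle\geqslant 0$ and the definition of $p,q$ (in particular $\alpha_{p+1},\ldots,\alpha_{q-1}\notin\Supp(\lambda)$ forces any partial chain to break dominance). One then rules out the degenerate witness in which $\lambda_i=\lambda$ for all $i$: since $\mu$ is covered by $\lambda$ in $\Pi^+(\lambda)$ and $\lambda-\mu$ has the specific little-brother form, a direct weight-count (or an inductive application of Lemma \ref{lem: coefficientimatriciali}) shows $V(\mu+(n-1)\lambda)\not\subset V(\lambda)^{\otimes n}$. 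Hence some $\lambda_i=\mu$, so $\mu\in\Pi$.

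\emph{Sufficiency.} Assume $\Pi\supset\LB{\lambda}$ and fix $\mu\in\Pi^+(\lambda)$; I would verify the tensor condition by induction on the height of $\lambda-\mu$, the base case $\mu=\lambda$ being trivial with $n=1$. For the inductive step, examine the covers of $\mu$ in $\Pi^+(\lambda)$. If some cover $\mu'$ satisfies $\mu'-\mu\in\Delta$, a PRV-type decomposition (or an explicit highest-weight calculation inside $V(\mu')\otimes V(\lambda)$) produces $\nu\in\Pi^+(\lambda)$ with $V(\mu+\lambda)\subset V(\mu')\otimes V(\nu)$, and combining the witness for $\mu'$ given by induction with this decomposition yields a witness for $\mu$. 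Otherwise $\mu$ is maximal in $\Pi^+(\lambda)\smallsetminus\{\lambda\}$ with $\lambda-\mu$ a non-trivial chain of consecutive simple roots; the combinatorial lemma from the necessity direction then identifies $\mu$ with an adjoint little brother $\lambda^\mathrm{lb}_{\Delta'}\in\LB{\lambda}\subset\Pi$, and the tensor condition is trivially witnessed by $n=1$, $\lambda_1=\mu$.

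The main obstacle is the combinatorial lemma underlying both implications: identifying the adjoint little brothers as precisely the dominant weights of the form $\lambda-\sum_{j=p}^q c_j\alpha_j$ ($c_j\in\{0,1\}$, not all zero) inside a non-simply-laced component, together with the classification of covers of $\lambda$ in $\Pi^+(\lambda)$. This requires an explicit case analysis across the non-simply-laced Dynkin types $\sf{B}_r$, $\sf{C}_r$, $\sf{F}_4$ and $\sf{G}_2$, and is the real combinatorial content concealed by the clean statement of the theorem.
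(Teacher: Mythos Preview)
This theorem is not proved in the present paper: it is quoted from \cite[Thm.~2.10]{BGMR} and used as a black box (for instance in the proof of Proposition~\ref{prop: necessita}). So there is no in-paper proof to compare against, and your proposal should be judged on its own.

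As it stands, the necessity argument has a genuine gap. Your ``key combinatorial lemma'' --- that the only nonzero $\eta$ with $0/1$-coefficients on $\{\alpha_p,\dots,\alpha_q\}$ for which $\lambda-\eta$ is dominant is the full chain $\sum_{j=p}^q\alpha_j$ --- is false. In type $\mathsf B_r$, take $q=r$ and any $\lambda$ with $\langle\lambda,\alpha_p^\vee\rangle\geqslant 2$: then $\eta=\alpha_p$ already gives $\lambda-\alpha_p$ dominant, and nothing in your setup excludes one of the $\lambda_i$ being $\lambda-\alpha_p$ with all others equal to $\lambda$. More seriously, the step you describe as ``ruling out the degenerate witness $\lambda_i=\lambda$ for all $i$'' is not a side remark but the entire content of the statement that $\lambda^{\mathrm{lb}}_{\Delta'}$ is an obstruction: showing $V(\mu+(n-1)\lambda)\not\subset V(\lambda)^{\otimes n}$ for every $n$ is exactly what singles out the little brothers among the weights covered by $\lambda$, and a ``direct weight-count'' does not do this (the highest-weight inequality $\mu+(n-1)\lambda\leqslant n\lambda$ is satisfied). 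The actual argument in \cite{BGMR} hinges on this non-inclusion, and it is where the non-simply-laced hypothesis really enters.

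In the sufficiency direction your outline is closer to the truth, but you are implicitly invoking a classification of the covers of $\lambda$ in $\Pi^+(\lambda)$: that every cover $\mu$ either satisfies $\lambda-\mu\in\Delta$ or is an adjoint little brother. This is correct (it is essentially Stembridge's description of covers in the dominance order on dominant weights), but it deserves to be stated and cited rather than folded into ``the combinatorial lemma from the necessity direction'', especially since that lemma, as you formulated it, is not right.
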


In particular, it follows from the previous theorem that, if $G$ is simply laced, then every simple adjoint compactification is normal.\\

If $n \in \mathbb{N}$ consider the set
\[	\mathrm{Tens}_n(G) = \{(\lambda_0, \ldots, \lambda_n) \in \mathcal{X}(T)^+ \times \ldots \times \mathcal{X}(T)^+ \, : \, V(\lambda_0) \subset V(\lambda_1) \otimes
\ldots \otimes V(\lambda_n)\}.	\]
Following lemma has been proved in several references, usually in the case $n=2$. For later use, we state it in a slightly more general form, which is
easily reduced to the case $n=2$ proceeding by induction on $n$.

\begin{lem}	[{\cite[Lemma 3.9]{Ku2}}] \label{lem: tensor semigroup}
The set $\mathrm{Tens}_n(G)$ is a semigroup with respect to the addition.
\end{lem}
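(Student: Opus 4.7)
The plan is to reduce the statement to the case $n=2$ by induction on $n$. The base case $n=2$ is precisely the content of \cite[Lemma~3.9]{Ku2}, which I would take as given; the inductive step is then a routine rearrangement of tensor product factors.

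For the inductive step, assume the statement holds for $n-1$ and let $(\lambda_0, \ldots, \lambda_n)$ and $(\mu_0, \ldots, \mu_n)$ be two elements of $\mathrm{Tens}_n(G)$. The idea is to split apart the last two factors of the tensor product. Decomposing $V(\lambda_{n-1}) \otimes V(\lambda_n)$ into its isotypic components and substituting the result into $V(\lambda_1) \otimes \cdots \otimes V(\lambda_n)$, the assumption $V(\lambda_0) \subset V(\lambda_1) \otimes \cdots \otimes V(\lambda_n)$ forces the existence of some $\nu \in \mathcal{X}(T)^+$ such that both $(\nu, \lambda_{n-1}, \lambda_n) \in \mathrm{Tens}_2(G)$ and $(\lambda_0, \lambda_1, \ldots, \lambda_{n-2}, \nu) \in \mathrm{Tens}_{n-1}(G)$ hold simultaneously. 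In the same way I would produce a dominant weight $\nu' \in \mathcal{X}(T)^+$ with the corresponding properties for the $\mu$-chain.

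At this point the base case, applied to $(\nu, \lambda_{n-1}, \lambda_n)$ and $(\nu', \mu_{n-1}, \mu_n)$, yields $V(\nu + \nu') \subset V(\lambda_{n-1} + \mu_{n-1}) \otimes V(\lambda_n + \mu_n)$, while the inductive hypothesis applied to the tuples $(\lambda_0, \lambda_1, \ldots, \lambda_{n-2}, \nu)$ and $(\mu_0, \mu_1, \ldots, \mu_{n-2}, \nu')$ yields the inclusion $V(\lambda_0 + \mu_0) \subset V(\lambda_1 + \mu_1) \otimes \cdots \otimes V(\lambda_{n-2} + \mu_{n-2}) \otimes V(\nu + \nu')$. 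Substituting the first inclusion into the last tensor factor of the second then gives the sought inclusion $V(\lambda_0 + \mu_0) \subset V(\lambda_1 + \mu_1) \otimes \cdots \otimes V(\lambda_n + \mu_n)$, which closes the induction.

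I see no genuine obstacle in this strategy: the only nontrivial representation-theoretic input is the base case, which is bypassed by the citation to \cite{Ku2}. The inductive mechanism itself is transparent, and the choice of the auxiliary weights $\nu$ and $\nu'$ is automatic from the isotypic decomposition of the last two tensor factors.
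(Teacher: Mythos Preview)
Your proposal is correct and follows exactly the approach the paper indicates: the paper does not give a detailed proof but merely remarks that the general case ``is easily reduced to the case $n=2$ proceeding by induction on $n$,'' citing \cite[Lemma~3.9]{Ku2} for the base case. Your inductive argument, splitting off the last two tensor factors via an auxiliary weight $\nu$, is precisely the routine reduction the paper has in mind.
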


An easy consequence of previous lemma is the following.

\begin{cor} \label{cor: traslazione}
Let $\lambda, \mu, \nu \in \mathcal{X}(T)^+$ be such that $V(\nu) \subset V(\lambda)
\otimes V(\mu)$. Then, for any $\nu' \in \mathcal{X}(T)^+$, it also holds
$V(\nu + \nu') \subset V(\lambda + \nu') \otimes V(\mu)$.
\end{cor}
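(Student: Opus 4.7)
The plan is essentially one-line: this corollary is an immediate application of Lemma \ref{lem: tensor semigroup} in the case $n=2$. I would proceed as follows. The hypothesis $V(\nu) \subset V(\lambda) \otimes V(\mu)$ says precisely that the triple $(\nu,\lambda,\mu)$ lies in $\mathrm{Tens}_2(G)$. The goal is to show that $(\nu+\nu',\lambda+\nu',\mu)$ also lies in $\mathrm{Tens}_2(G)$.

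To bridge these two memberships, I would introduce the trivial triple $(\nu',\nu',0)$: since $V(0) = \Bbbk$ is the trivial module, we have $V(\nu') \otimes V(0) \simeq V(\nu')$, so $V(\nu') \subset V(\nu') \otimes V(0)$ and therefore $(\nu',\nu',0) \in \mathrm{Tens}_2(G)$. Applying Lemma \ref{lem: tensor semigroup}, the componentwise sum
\[
 (\nu,\lambda,\mu) + (\nu',\nu',0) = (\nu+\nu',\lambda+\nu',\mu)
\]
lies again in $\mathrm{Tens}_2(G)$, which is exactly the conclusion. There is no real obstacle here; the only point worth noting is that the zero weight is dominant and $V(0)$ is the unit for the tensor product, which legitimizes the auxiliary triple.
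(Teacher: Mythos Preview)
Your proposal is correct and matches the paper's approach exactly: the paper states the corollary as ``an easy consequence of previous lemma'' without further proof, and your argument via the auxiliary triple $(\nu',\nu',0)\in\mathrm{Tens}_2(G)$ is precisely the intended one-line deduction from Lemma~\ref{lem: tensor semigroup}.
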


When $G$ is of type $\mathsf{A}$, the following saturation property of $\mathrm{Tens}_n(G)$ holds:
$$
\begin{array}{l}
\text{If } N > 0 \text{ and } \lambda_0, \ldots, \lambda_n \in \mathcal{X}(T)^+ \text{ are such that } (N\lambda_0, \ldots, N\lambda_n) \in \mathrm{Tens}_n(G),\\
\text{ then } (\lambda_0, \ldots, \lambda_n) \in \mathrm{Tens}_n(G) \text{ as well.}
\end{array}
$$
In case $n=2$, previous property was proved by Knutson and Tao in \cite{KT}, and then it has been conjectured for every simply laced group by Kapovich and Millson in \cite{KM}. As discussed in the survey of Fulton \cite{Fu}, when $G$ is of type $\mathsf{A}$ the case of a general $n$ can be deduced from the case $n=2$.

\begin{oss}
We already noticed that Theorem \ref{teo: normalita caso aggiunto} implies in particular that, if $G$ is simply laced, then every simple adjoint compactification is normal. When $G$ is of type $\mathsf{A}$, a very easy proof of this fact follows by the saturation of $\mathrm{Tens}_n(G)$ thanks to Proposition \ref{prop: criterio tensoriale normalità aggiunto}. Let indeed $\lambda \in \mathcal{X}(T)^+$ and $\mu\in \Pi^+(\lambda)$. Then there exists $n \in \mathbb{N}$ such that $V(n\mu) \subset V(\lambda)^{\otimes n}$ (see \cite[Lemma 4.9]{AB} or \cite[Lemma 1]{Ti}). By Corollary \ref{cor: traslazione}, this implies that $V(n\mu + n(n-1)\lambda) \subset V(n\lambda)^{\otimes n}$, hence the saturation property implies that $V(\mu + (n-1)\lambda) \subset V(\lambda)^{\otimes n}$.
\end{oss}

\section{The partial orders $\leqslant^\lambda$ and $\leqslant^\lambda_\mathbb{Q}$.}

Let $\lambda \in \Lambda^+$. This section is devoted to proving some properties of the partial orderings $\leqslant^\lambda$ and $\leqslant^\lambda_\mathbb{Q}$ that we defined in the
introduction. Notice that if $\lambda, \lambda' \in \Lambda^+$ are such that $\Supp(\lambda) = \Supp(\lambda')$, then $\lambda$ and $\lambda'$ induce the same partial orders on
$\Lambda$.

\begin{prop} \label{prop: lambda dominanza in natura}
If $\lambda \in \mathcal{X}(T)^+$ and $\pi \in \Pi(\lambda)$, then $\pi \leqslant^\lambda \lambda$.
\end{prop}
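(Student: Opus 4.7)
The plan is to use the parabolic PBW decomposition associated to the standard Levi subalgebra $\mathfrak{l}_J \subset \mathfrak{g}$ whose simple roots are $J = \Delta \smallsetminus \Supp(\lambda)$. Write $\mathfrak{g} = \mathfrak{u}^-_J \oplus \mathfrak{l}_J \oplus \mathfrak{u}^+_J$, where $\mathfrak{u}^\pm_J$ are respectively spanned by the root vectors $e_{\pm \beta}$ with $\beta \in \Phi^+ \smallsetminus \Phi_J^+$, $\Phi_J$ denoting the sub-root system generated by $J$. The key observation is that $\Phi^+ \smallsetminus \Phi_J^+$ coincides with $\Phi^+(\lambda)$: a positive root has $\Supp_\Delta$ contained in $J$ if and only if its support is disjoint from $\Supp(\lambda)$. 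Once we know that $V(\lambda) = U(\mathfrak{u}^-_J)\, v_\lambda$, every weight of $V(\lambda)$ will automatically be expressible as $\lambda$ minus a sum of roots lying in $\Phi^+(\lambda)$.

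The preparatory step is to check that the highest weight line $\Bbbk v_\lambda \subset V(\lambda)$ is preserved by $\mathfrak{l}_J$. By definition of $\Supp(\lambda)$ one has $\langle \lambda, \alpha^\vee \rangle = 0$ for every $\alpha \in J$, so $\lambda$ is orthogonal (via the Killing form) to every element of $J$, hence also to every element of $\mathbb{Z}[J] \supset \Phi_J$. Consequently $\langle \lambda, \beta^\vee \rangle = 0$ for every $\beta \in \Phi_J$. For $\beta \in \Phi_J^+$ the highest weight property yields $e_\beta v_\lambda = 0$, and then a standard $\mathfrak{sl}_2$ computation gives $h_\beta v_\lambda = [e_\beta,f_\beta] v_\lambda = 0$ and subsequently $f_\beta v_\lambda = 0$. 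Therefore both $\mathfrak{u}^+_J \cap \mathfrak{l}_J$ and $\mathfrak{u}^-_J \cap \mathfrak{l}_J$ annihilate $v_\lambda$ (together with the Cartan acting by the scalar $\lambda$), so $\mathfrak{l}_J$ stabilizes $\Bbbk v_\lambda$.

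With this in hand, the PBW theorem yields $U(\mathfrak{g}) = U(\mathfrak{u}^-_J)\, U(\mathfrak{l}_J)\, U(\mathfrak{u}^+_J)$. Since $\mathfrak{u}^+_J \subset \mathfrak{n}^+$ annihilates $v_\lambda$ and $\mathfrak{l}_J$ preserves $\Bbbk v_\lambda$, we obtain $V(\lambda) = U(\mathfrak{g})\, v_\lambda = U(\mathfrak{u}^-_J)\, v_\lambda$. But $U(\mathfrak{u}^-_J)$ is spanned by monomials in $f_\beta$ with $\beta \in \Phi^+(\lambda)$, so any weight vector in $V(\lambda)$ has weight of the form $\lambda - \beta_1 - \cdots - \beta_r$ with $\beta_i \in \Phi^+(\lambda)$. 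Hence $\lambda - \pi \in \mathbb{N}[\Phi^+(\lambda)]$, which is exactly the required inequality $\pi \leqslant^\lambda \lambda$.

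There is no substantial obstacle; the only delicate point is the vanishing $f_\beta v_\lambda = 0$ for $\beta \in \Phi_J^+$, which is a routine consequence of $\langle \lambda, \beta^\vee \rangle = 0$ via $\mathfrak{sl}_2$-theory.
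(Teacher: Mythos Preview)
Your argument is correct and follows essentially the same parabolic approach as the paper: both show $V(\lambda) = U(\mathfrak{u}_J^-)\,v_\lambda$ for $J = \Delta \smallsetminus \Supp(\lambda)$, after observing that the parabolic $\mathfrak{p}_J = \mathfrak{l}_J \oplus \mathfrak{u}_J^+$ preserves the highest weight line. The paper gets this last fact for free by defining $P$ as the stabilizer of $[v_\lambda]$, whereas you verify it by hand via $\mathfrak{sl}_2$-theory; note only the small slip that where you write ``$\mathfrak{u}^\pm_J \cap \mathfrak{l}_J$'' you mean the nilpotent parts $\mathfrak{n}^\pm \cap \mathfrak{l}_J$ of the Levi (the intersections as written are zero).
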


\begin{proof}
Let $v_\lambda, v_\lambda^- \in V(\lambda)$ be respectively a highest weight vector and a lowest weight vector and denote $P,P^-$ their stabilizers in $G$. Denote
$\mathfrak{p}$ the Lie algebra of $P$ and $\mathfrak{n}^-$ the Lie algebra of the unipotent radical of $P^-$ and let $\mathfrak{U}(\mathfrak{g})$ (resp. $\mathfrak{U}(\mathfrak{n}^-)$) be the universal
enveloping algebra of $\mathfrak{g}$ (resp. of $\mathfrak{n}^-$). Then $\mathfrak{g} = \mathfrak{n}^- \oplus \mathfrak{p}$ and it follows $V(\lambda) = \mathfrak{U}(\mathfrak{g}) v_\lambda = \mathfrak{U}(\mathfrak{n}^-) v_\lambda$. On
the other hand, if $\mathfrak{g}_\alpha \subset \mathfrak{g}$ denotes the root space of $\alpha \in \Phi$, then $\mathfrak{n}^- = \bigoplus_{\alpha \in \Phi^-(\lambda)} \mathfrak{g}_\alpha$, hence
by looking at $T$-weights it follows $\pi - \lambda \in \mathbb{N}[\Phi^-(\lambda)]$.
\end{proof}

\begin{cor} \label{cor: lambda dominanza in natura}
Let $\lambda \in \mathcal{X}(T)^+$ and let $\lambda_1,\ldots,\lambda_n \in \Pi^+(\lambda)$.
If $\mu, \nu \in \mathcal{X}(T)^+$ are such that
$$
	V(\nu) \subset V(\mu)  \otimes V(\lambda_1) \otimes \ldots \otimes V(\lambda_n),
$$
then $\nu \leqslant^\lambda \mu + n \lambda$.
\end{cor}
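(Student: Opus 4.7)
The plan is to reduce the statement to an application of Proposition \ref{prop: lambda dominanza in natura}, factor by factor. The key reduction is to show that $\nu - \mu$ is a weight of $V(\lambda_1) \otimes \cdots \otimes V(\lambda_n)$, i.e.~that we may write $\nu = \mu + \pi_1 + \cdots + \pi_n$ with each $\pi_i \in \Pi(\lambda_i)$. Once this decomposition is available, the conclusion is immediate: since $\lambda_i \in \Pi^+(\lambda)$, i.e.~$\lambda_i \leqslant \lambda$ with both weights dominant, the standard inclusion $\Pi(\lambda_i) \subseteq \Pi(\lambda)$ gives $\pi_i \in \Pi(\lambda)$, so Proposition \ref{prop: lambda dominanza in natura} yields $\lambda - \pi_i \in \mathbb{N}[\Phi^+(\lambda)]$, and summing yields
\[
	\mu + n\lambda - \nu \;=\; \sum_{i=1}^n (\lambda - \pi_i) \;\in\; \mathbb{N}[\Phi^+(\lambda)],
\]
which is precisely $\nu \leqslant^\lambda \mu + n\lambda$.

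To establish that $\nu - \mu$ is a weight of $W := V(\lambda_1) \otimes \cdots \otimes V(\lambda_n)$, I would take a highest weight vector $v_\nu \in V(\nu) \subset V(\mu) \otimes W$ and expand it according to a weight-basis of $V(\mu)$ in the first factor: $v_\nu = \sum_{\eta,j} v_{\eta,j} \otimes w_{\eta,j}$, with $v_{\eta,j}$ running over a basis of $V(\mu)^\eta$. The claim reduces to showing that the $\eta = \mu$ contribution is non-zero, for then $w_\mu$ is a non-zero vector of $W$ of weight $\nu - \mu$. Assuming instead $w_\mu = 0$, one applies each simple raising operator $X_\alpha \in \mathfrak{g}_\alpha$ to $v_\nu$ and uses $\mathfrak{n}^+ v_\nu = 0$: projecting the resulting identity onto the line $V(\mu)^\mu = \mathbb{C} v_\mu$ in the first factor produces the relation $\langle \mu, \alpha^\vee\rangle\, w_{\mu - \alpha} = 0$, which forces $w_{\mu - \alpha} = 0$ whenever $\alpha \in \Supp(\mu)$. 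Iterating this cascade across the weights of $V(\mu)$---which are all reachable from $\mu$ by chains of simple-root lowerings, by irreducibility---one obtains $w_{\eta,j} = 0$ for every $\eta$ and $j$, contradicting $v_\nu \neq 0$.

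The main obstacle is carrying this cascade through weights of $V(\mu)$ whose multiplicity is greater than one: one must verify that the linear system of relations on the components $\{w_{\eta,j}\}_j$ is non-degenerate, which ultimately rests on the $\mathfrak{sl}_2$-representation theory attached to each simple root subalgebra and the cyclicity of $V(\mu)$ as a $\mathfrak{U}(\mathfrak{n}^+)$-module generated from any single non-zero weight vector. A cleaner alternative that bypasses this combinatorics is to invoke the crystal-basis description of tensor products (e.g.~Littelmann's path model): every submodule $V(\nu) \subset V(\mu) \otimes W$ then corresponds to a crystal element $b \in B(\lambda_1) \otimes \cdots \otimes B(\lambda_n)$ of weight $\nu - \mu$, directly producing the required decomposition.
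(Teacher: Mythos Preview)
Your strategy is correct and coincides with the paper's: both rest on the standard fact that any highest weight $\nu$ occurring in $V(\mu)\otimes W$ has the form $\nu=\mu+\pi$ with $\pi$ a weight of $W$. The paper simply packages this more economically, by induction on $n$: the base case $n=1$ is quoted from \cite[Prop.~3.2]{Ku2}, and one then peels off one tensor factor at a time, picking an intermediate $\mu'$ with $V(\mu')\subset V(\mu)\otimes V(\lambda_1)\otimes\cdots\otimes V(\lambda_{n-1})$ and $V(\nu)\subset V(\mu')\otimes V(\lambda_n)$. You instead apply the fact in one shot with $W=V(\lambda_1)\otimes\cdots\otimes V(\lambda_n)$; either route gives the same inequality.

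Where you spend unnecessary effort is in trying to \emph{reprove} that fact. Your raising-operator cascade, as written, does not survive weight spaces of multiplicity greater than one: from $e_\alpha v_\nu=0$ you extract only one linear relation per $\alpha$ among the components $\{w_{\eta,j}\}_j$, and nothing guarantees these are enough to force all of them to vanish. The crystal-basis detour you propose is valid but disproportionate. If you want a self-contained argument, the clean fix is to choose $\eta_0$ \emph{maximal} (for $\leqslant$) among the weights of $V(\mu)$ with $v^{(\eta_0)}\neq 0$; writing $v^{(\eta_0)}=\sum_j u_j\otimes w_j$ with the $w_j$ linearly independent, maximality together with $e_\alpha v_\nu=0$ forces $(e_\alpha\otimes 1)v^{(\eta_0)}=0$ for every simple $\alpha$, hence $e_\alpha u_j=0$ for all $j$ and $\alpha$, so each $u_j$ is a highest weight vector and $\eta_0=\mu$. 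This is precisely the content of the result the paper cites; there is no need to reprove it.
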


\begin{proof}
We proceed by induction on $n$, suppose first that $n=1$. Recall that every highest weight in $V(\mu) \otimes V(\lambda_1)$ is of the shape $\mu + \pi$ for
some $\pi \in \Pi(\lambda_1)$ (see for instance \cite[Proposition 3.2]{Ku2}). On the other hand $\Pi(\lambda_1) \subset \Pi(\lambda)$, therefore previous
proposition implies that $\nu\leq^\lambda \mu + \lambda$. Suppose now $n > 1$ and let $\mu' \in \mathcal{X}(T)^+$ be such that
$$
V(\mu') \subset V(\mu) \otimes V(\lambda_1) \otimes \ldots \otimes V(\lambda_{n-1})
\qquad \text{ and } \qquad V(\nu) \subset V(\mu') \otimes V(\lambda_n):
$$
Then by the inductive hypothesis we get $\mu' \leqslant^\lambda \mu + (n-1) \lambda$ and $\nu\leq^\lambda \mu' + \lambda$ and the claim follows.
\end{proof}

Following proposition will be the core of the necessity part of Theorem 1.

\begin{prop} \label{prop: necessita}
Suppose that $\nu, \mu_1, \ldots, \mu_n \in \PQ^+(\lambda)$ are such that
\[ V(\nu + (n-1)\lambda) \subset V(\mu_1) \otimes \cdots \otimes V(\mu_n). \]
Then $\nu \leqslant_\mathbb{Q}^\lambda \mu_i$ for every $i$.
\end{prop}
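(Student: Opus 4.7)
The plan proceeds in two ingredients. First, by iterating \cite[Prop.~3.2]{Ku2} (already invoked in the proof of Corollary \ref{cor: lambda dominanza in natura}), for any fixed index $i$ every irreducible component of $V(\mu_1) \otimes \cdots \otimes V(\mu_n)$ has highest weight of the form $\mu_i + \sum_{j \neq i} \pi_j$ with $\pi_j \in \Pi(\mu_j)$. Applied to the component $V(\nu + (n-1)\lambda)$, this yields weights $\pi_j \in \Pi(\mu_j)$ ($j \neq i$) with
\[
\nu + (n-1)\lambda \;=\; \mu_i + \sum_{j \neq i} \pi_j,
\]
whence
\[
\mu_i - \nu \;=\; \sum_{j \neq i}(\lambda - \pi_j).
\]
It therefore suffices to prove $\lambda - \pi_j \in \mathbb{Q}^+[\Phi^+(\lambda)]$ for every such $\pi_j$.

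The second ingredient is a convex-hull extension of Proposition \ref{prop: lambda dominanza in natura}. Since $\mu_j \in \PQ^+(\lambda) \subset \mathcal{P}(\lambda)$ and $\mathcal{P}(\lambda)$ is a $W$-stable convex set, the orbit $W\mu_j$ lies in $\mathcal{P}(\lambda)$, so $\mathcal{P}(\mu_j) = \mathrm{conv}(W\mu_j) \subset \mathcal{P}(\lambda)$ and in particular $\pi_j \in \Pi(\mu_j) \subset \mathcal{P}(\lambda)$. Writing $\pi_j = \sum_w c_w\,(w\lambda)$ as a convex combination of vertices of $\mathcal{P}(\lambda)$, which lie in $W\lambda \subset \Pi(\lambda)$, Proposition \ref{prop: lambda dominanza in natura} yields $\lambda - w\lambda \in \mathbb{N}[\Phi^+(\lambda)]$ for every $w$, and consequently
\[
\lambda - \pi_j \;=\; \sum_w c_w\,(\lambda - w\lambda) \;\in\; \mathbb{Q}^+[\Phi^+(\lambda)].
\]
Summing over $j \neq i$ then gives $\mu_i - \nu \in \mathbb{Q}^+[\Phi^+(\lambda)]$, i.e.\ $\nu \leqslant^\lambda_\mathbb{Q} \mu_i$, as desired.

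The main subtlety lies in this last step. A naive splitting $\lambda - \pi_j = (\lambda - \mu_j) + (\mu_j - \pi_j)$ does not work: $\lambda - \mu_j \in \mathbb{Q}^+[\Delta]$ need not be supported on $\Phi^+(\lambda)$, since simple roots outside $\Supp(\lambda)$ may occur with positive coefficient (e.g.\ $\omega_2 \leqslant_\mathbb{Q} \omega_1 + \omega_3$ in type $\mathsf{A}_3$), and likewise $\mu_j - \pi_j \in \mathbb{N}[\Phi^+(\mu_j)]$ need not be supported on $\Phi^+(\lambda)$ because $\Supp(\mu_j)$ may fail to be contained in $\Supp(\lambda)$. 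Expressing $\pi_j$ directly as a convex combination of elements of $W\lambda$ is precisely what bypasses both obstacles at once.
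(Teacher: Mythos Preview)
Your proof is correct and considerably more direct than the paper's. Both arguments isolate a fixed index $i$ and aim to show that $\nu + (n-1)\lambda \leqslant^\lambda_\mathbb{Q} \mu_i + (n-1)\lambda$, but they reach this in very different ways.

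The paper scales everything by $c = \det(C)$ so that $c\mu_j \in \Pi^+(c\lambda)$, then invokes the adjoint normality theorem (Theorem~\ref{teo: normalita caso aggiunto}) together with Proposition~\ref{prop: criterio tensoriale normalità aggiunto} to rewrite each $V(c\mu_j)$ (for $j \neq i$) as a direct summand of a tensor product of modules $V(c\lambda_k)$ with $\lambda_k \in \LBb{c\lambda} \subset \Pi^+(c\lambda)$; only then does Corollary~\ref{cor: lambda dominanza in natura} apply. This is a genuine detour through the main result of \cite{BGMR}.

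Your argument bypasses all of this. Writing $\nu + (n-1)\lambda = \mu_i + \sum_{j \neq i} \pi_j$ with $\pi_j \in \Pi(\mu_j)$ (which follows from \cite[Prop.~3.2]{Ku2} after reordering the tensor factors) reduces the problem to the inequality $\pi \leqslant^\lambda_\mathbb{Q} \lambda$ for every $\pi \in \mathcal{P}(\lambda)$. You establish this by expressing $\pi$ as a rational convex combination of the vertices $w\lambda$ and applying Proposition~\ref{prop: lambda dominanza in natura} to each $w\lambda \in \Pi(\lambda)$. This is exactly the content of Proposition~\ref{prop: lambda coni} (which appears \emph{after} Proposition~\ref{prop: necessita} in the paper), so in effect you have observed that Propositions~\ref{prop: lambda dominanza in natura} and~\ref{prop: lambda coni} alone already imply Proposition~\ref{prop: necessita}, with no need for the machinery of \cite{BGMR}. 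Your closing remark about why the naive splitting $\lambda - \pi_j = (\lambda - \mu_j) + (\mu_j - \pi_j)$ fails is apt and pinpoints why the convex-hull viewpoint is the right one.
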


\begin{proof}
We only show the inequality $\nu \leqslant_\mathbb{Q}^\lambda \mu_1$, the others are analogous.
Set $c = \det(C)$, where $C$ is the Cartan matrix of $\Phi$,
and denote $\pi = \nu + (n-1)\lambda$: we will prove the claim by showing that
$$c \pi \leqslant^\lambda c\mu_1 + (n-1)c\lambda.$$

By Theorem \ref{teo: normalita caso aggiunto} the variety $X_{\LBb{c\lambda}}$ is normal.
Set $\LBb{c\lambda} = \{\lambda_1,\ldots,\lambda_s\}$ and let $i>1$.
Since $c \mu_i \leqslant c \lambda$, by Proposition
\ref{prop: criterio tensoriale normalità aggiunto}
they exist $M_i^1, \ldots, M_i^s \in \mathbb{N}$ such that
$$
	V(c\mu_i + (N_i-1) c \lambda) \subset \bigotimes_{j=1}^sV(c \lambda_j)^{\otimes M_i^j}
$$
where we set $N_i = \sum_{j=1}^s M_i^j$. On the other hand,
the hypothesis together with Lemma \ref{lem: tensor semigroup} imply that
$$
	V(c \pi) \subset V(c \mu_1) \otimes \cdots \otimes V(c \mu_n).
$$
Combining previous inclusions together with Corollary \ref{cor: traslazione} we get then
$$
	V\big(c \pi + \sum_{i=2}^n(N_i-1) c \lambda\big) \subset
	V(c \mu_1) \otimes \bigotimes_{j=1}^sV(c \lambda_j)^{\otimes M_2^j + \ldots + M_n^j}.
$$

Since the partial orderings induced by $\lambda$ and by $c \lambda$ are the same,
Proposition \ref{prop: lambda dominanza in natura} together
with Corollary \ref{cor: lambda dominanza in natura} show that
\[
	c \pi + \sum_{i=2}^n(N_i-1) c \lambda \leqslant^\lambda c \mu_1 +
\sum_{i=2}^n N_i  c\lambda,
\]
i.e. $c \pi \leqslant^\lambda c \mu_1 + (n-1) c \lambda$.
\end{proof}

Denote $\mathcal{C}(\lambda) \subset \mathcal{X}(T)_\mathbb{Q}$ the cone generated by $\Phi^-(\lambda)$
and recall that $\mathcal{P}(\lambda)$ is the polytope generated in $\mathcal{X}(T)_\mathbb{Q}$ by the orbit $W\lambda$.
It follows from the definition that, if $\nu, \mu \in \mathcal{X}(T)$,
then $\nu \leqslant^\lambda_\mathbb{Q} \mu$ if and only if $\nu - \mu \in \mathcal{C}(\lambda)$.

\begin{prop}	\label{prop: lambda coni}
The cone $\mathcal{C}(\lambda)$ is generated by $\mathcal{P}(\lambda) - \lambda$.
Moreover, the following equality holds:
$$
	\mathcal{C}(\lambda) = \bigcup_{n\in \mathbb{N}} \mathcal{P}(n\lambda) - n\lambda.
$$
\end{prop}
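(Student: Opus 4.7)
The plan is to prove the two assertions in turn, reducing the first to the identity $s_\beta\lambda-\lambda=-\langle\lambda,\beta^\vee\rangle\beta$ for $\beta\in\Phi^+(\lambda)$, and the second to the convexity of $\mathcal{P}(\lambda)$ together with the scaling $\mathcal{P}(n\lambda)=n\mathcal{P}(\lambda)$.

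For the first assertion, I would start with the inclusion $\mathcal{P}(\lambda)-\lambda\subset\mathcal{C}(\lambda)$: by Proposition~\ref{prop: lambda dominanza in natura}, every weight $\pi\in\Pi(\lambda)$ satisfies $\lambda-\pi\in\mathbb{N}[\Phi^+(\lambda)]$, so $\pi-\lambda\in\mathcal{C}(\lambda)$, and passing to convex hulls yields the claim. For the reverse inclusion of cones, it suffices to show that each generator $-\beta$ of $\mathcal{C}(\lambda)$, with $\beta\in\Phi^+(\lambda)$, is a non-negative multiple of an element of $\mathcal{P}(\lambda)-\lambda$. Since $s_\beta\lambda\in W\lambda\subset\mathcal{P}(\lambda)$ and $s_\beta\lambda-\lambda=-\langle\lambda,\beta^\vee\rangle\beta$, this reduces to the strict positivity $\langle\lambda,\beta^\vee\rangle>0$. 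I expect this positivity to be the main technical point, since $\Phi^+(\lambda)$ is defined via the support of $\beta$ in $\Delta$ rather than via pairing with $\lambda$. Expanding $\beta=\sum_{\alpha\in\Delta}n_\alpha\alpha$, the identity $\beta^\vee=2\beta/(\beta,\beta)$ gives $\beta^\vee=\sum_{\alpha\in\Delta}n_\alpha\tfrac{(\alpha,\alpha)}{(\beta,\beta)}\alpha^\vee$, so the coefficient of $\alpha^\vee$ in $\beta^\vee$ is strictly positive if and only if $\alpha\in\Supp_\Delta(\beta)$. By the definition of $\Phi^+(\lambda)$ there exists $\alpha\in\Supp_\Delta(\beta)\cap\Supp(\lambda)$; this $\alpha$ contributes a strictly positive term to $\langle\lambda,\beta^\vee\rangle=\sum_\alpha n_\alpha\tfrac{(\alpha,\alpha)}{(\beta,\beta)}\langle\lambda,\alpha^\vee\rangle$, whose remaining terms are non-negative as $\lambda$ is dominant.

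For the second assertion, the inclusion $\supset$ follows from $\mathcal{P}(n\lambda)=n\mathcal{P}(\lambda)$, which yields $\mathcal{P}(n\lambda)-n\lambda=n(\mathcal{P}(\lambda)-\lambda)\subset\mathcal{C}(\lambda)$ since $\mathcal{C}(\lambda)$ is a cone containing $\mathcal{P}(\lambda)-\lambda$ by the first assertion. For the reverse inclusion, an arbitrary $x\in\mathcal{C}(\lambda)$ is, again by the first assertion, a non-negative combination $x=\sum_i t_i(v_i-\lambda)$ with $v_i\in\mathcal{P}(\lambda)$ and $t_i\geq 0$. Setting $T=\sum_i t_i$ and $v=T^{-1}\sum_i t_iv_i\in\mathcal{P}(\lambda)$ (the case $T=0$ being trivial) one obtains $x=T(v-\lambda)$. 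For any integer $n\geq T$ we then have $x+n\lambda=n\bigl[(T/n)v+(1-T/n)\lambda\bigr]\in n\mathcal{P}(\lambda)=\mathcal{P}(n\lambda)$ by convexity of $\mathcal{P}(\lambda)$, hence $x\in\mathcal{P}(n\lambda)-n\lambda$, completing the proof.
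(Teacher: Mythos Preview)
Your proof is correct and, for the first assertion, follows a more direct route than the paper. The paper argues by analyzing the edges of the polytope $\mathcal{P}(\lambda)$ emanating from the vertex $\lambda$: it proves a separate lemma showing that each such edge goes to a vertex $s_\tau\lambda$ with $\tau\in\Phi^+(\lambda)$, and this lemma in turn uses the observation (stated there as $\Phi^-(\lambda)\subset\mathcal{C}$) that $s_\sigma\lambda-\lambda$ is a nonzero multiple of $\sigma$ for $\sigma\in\Phi^-(\lambda)$. You bypass the edge analysis entirely by putting this same reflection identity at the center of the argument, together with the positivity $\langle\lambda,\beta^\vee\rangle>0$ for $\beta\in\Phi^+(\lambda)$, which you verify directly from the support condition defining $\Phi^+(\lambda)$. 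Combined with Proposition~\ref{prop: lambda dominanza in natura} for the inclusion $\mathcal{P}(\lambda)-\lambda\subset\mathcal{C}(\lambda)$, this gives both directions at once. Your approach is shorter and avoids any appeal to the face structure of $\mathcal{P}(\lambda)$; the paper's edge lemma, on the other hand, gives slightly finer geometric information (identifying the edges at $\lambda$ with reflections), though it is not used elsewhere.

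For the second assertion the paper simply declares it trivial once the first is known; your explicit argument via the scaling $\mathcal{P}(n\lambda)=n\mathcal{P}(\lambda)$ and convexity is a clean way to unpack this.
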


\begin{proof}
If $\pi_1, \pi_2 \in \mathcal{X}(T)$, denote by $\overline{\pi_1, \pi_2} \subset \mathcal{X}(T)_\mathbb{Q}$
the line segment connecting $\pi_1$ and $\pi_2$.
Since $\mathcal{P}(\lambda)$ is a convex polytope, the cone generated by $\mathcal{P}(\lambda) - \lambda$
is generated by the set
$$
	\mathcal{G}(\lambda) = \{w\lambda - \lambda \, : \, \overline{w\lambda,\lambda} \text{ is an edge of } \mathcal{P}(\lambda)\}.$$
By the following lemma, every element in $\mathcal{G}(\lambda)$ is a positive multiple
of an element in $\Phi^-(\lambda)$. Therefore $\mathcal{P}(\lambda) - \lambda$ generates $\mathcal{C}(\lambda)$
and the second claim is trivial.
\end{proof}

\begin{lem}
Let $w \in W$ be such that $\overline{w\lambda,\lambda}$ is an edge of $\mathcal{P}(\lambda)$. Then $w = s_\tau$ is the reflection associated to a positive root $\tau \in \Phi^+(\lambda)$.
\end{lem}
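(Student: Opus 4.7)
The plan is to produce a positive root $\tau \in \Phi^+(\lambda)$ with $s_\tau \lambda = w\lambda$; since this relation determines $w$ only modulo $W_\lambda$, we may then pick $s_\tau$ as the representative of the coset $wW_\lambda$, so $w = s_\tau$. The bulk of the work is the first statement; the support condition $\tau \in \Phi^+(\lambda)$ then falls out of a short computation.

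I would start by choosing a rational functional $\xi \in \mathcal{X}(T)^\vee_\mathbb{Q}$ with $\langle \xi,\cdot\rangle$ attaining its maximum on $\mathcal{P}(\lambda)$ exactly on the edge. Using $W$-invariance of $\mathcal{P}(\lambda)$ (and $W$-equivariance of the desired conclusion: the $W$-orbit of a root is a root), a Weyl translate puts $\xi$ in the closed dominant Weyl chamber, moving the edge to a $W$-translate that again has $\lambda$ as one endpoint -- this because a dominant $\xi$ is maximised over $W\lambda$ at $\lambda$ itself. In this normalised setup the vertices of $\mathcal{P}(\lambda)$ where $\xi$ attains its max form the orbit $W_\xi\lambda$, where $W_\xi = \langle s_\alpha : \alpha \in \Delta,\; \langle \xi,\alpha^\vee\rangle = 0\rangle$ is the standard parabolic stabiliser of $\xi$; since the max is attained on an edge with only two vertices, $W_\xi \cap W_\lambda$ has index $2$ in $W_\xi$.

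Any index-$2$ subgroup of a Coxeter group is the kernel of a sign character on the generators, and such a character must be non-trivial on at least one generator. Hence some simple reflection $s_\alpha \in W_\xi$ (with $\alpha \in \Delta$ and $\langle \xi,\alpha^\vee\rangle = 0$) fails to stabilise $\lambda$, and therefore $s_\alpha\lambda = w\lambda$. Reversing the original Weyl translate yields $w\lambda = s_\tau\lambda$ for a root $\tau$, which we may take positive since $s_\tau = s_{-\tau}$. For the claim $\tau \in \Phi^+(\lambda)$: writing $\tau = \sum_{\alpha\in\Delta} n_\alpha\alpha$ with $n_\alpha \geq 0$, we have $\tau^\vee = \sum_\alpha n_\alpha \tfrac{(\alpha,\alpha)}{(\tau,\tau)}\alpha^\vee$ with non-negative coefficients in the simple coroots; the identity $s_\tau\lambda = \lambda - \langle \lambda,\tau^\vee\rangle\tau$ combined with $s_\tau\lambda \neq \lambda$ forces $\langle \lambda,\tau^\vee\rangle > 0$, which by dominance of $\lambda$ requires some $\alpha \in \Supp_\Delta(\tau)$ with $\langle \lambda, \alpha^\vee\rangle > 0$, i.e.\ $\alpha \in \Supp(\lambda)$, so $\tau \in \Phi^+(\lambda)$ as required.

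The main obstacle is the Coxeter-theoretic step of extracting a simple reflection from the non-trivial coset of $W_\xi \cap W_\lambda$ in $W_\xi$: one must carefully identify the $\xi$-maximal vertex set with $W_\xi\lambda$ and then use that the index-$2$ subgroup of a Coxeter group is cut out by a sign character non-trivial on at least one generator. Once this is in place, the support property of $\tau$ is an elementary computation.
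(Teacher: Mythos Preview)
Your argument is essentially correct, but it takes a rather different route from the paper's. The paper's proof is a short convexity argument: since $\overline{w\lambda,\lambda}$ is an edge, the vector $w\lambda - \lambda$ spans an extremal ray of the cone $\mathcal{C}$ generated by $\mathcal{P}(\lambda) - \lambda$. Proposition~\ref{prop: lambda dominanza in natura} gives $w\lambda - \lambda = \sum_{\sigma \in \Phi^-(\lambda)} a_\sigma \sigma$ with all $a_\sigma \geqslant 0$, and since each $\sigma \in \Phi^-(\lambda)$ already lies in $\mathcal{C}$ (being a positive multiple of $s_\sigma\lambda - \lambda$), extremality forces exactly one $a_\tau$ to be nonzero. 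Thus $w\lambda - \lambda$ is proportional to a single root $\tau \in \Phi^-(\lambda)$, and both $w\lambda$ and $s_\tau\lambda$ are vertices on the same line through $\lambda$, hence equal.

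Your approach instead analyses the face structure of the Weyl polytope via a supporting functional and the stabiliser subgroup $W_\xi$, then extracts a simple reflection from the index-two coset. This works, and it has the virtue of not invoking Proposition~\ref{prop: lambda dominanza in natura}. Two small comments: first, your notation $\langle \xi, \alpha^\vee\rangle$ should be $\langle \alpha, \xi\rangle$, since $\xi$ lives in $\mathcal{X}(T)^\vee_\mathbb{Q}$; second, the ``sign character'' detour is unnecessary, since once you know $W_\xi \cap W_\lambda$ is a \emph{standard parabolic} subgroup $W_K \subsetneq W_J = W_\xi$ (using $W_I \cap W_{I'} = W_{I\cap I'}$), any $\alpha \in J \smallsetminus K$ already gives $s_\alpha \notin W_K$ directly. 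The paper's cone argument is shorter and fits the surrounding narrative (the lemma is used precisely to describe the extremal rays of $\mathcal{C}(\lambda)$), but your Coxeter-theoretic argument is a perfectly valid alternative.
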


\begin{proof}
Denote $\mathcal{C}$ the cone generated by $\mathcal{P}(\lambda) - \lambda$.
Notice that $\Phi^-(\lambda) \subset \mathcal{C}$: indeed if $\sigma \in \Phi^-(\lambda)$
then $s_\sigma\lambda \in\mathcal{P}(\lambda)$ and $s_\sigma \lambda - \lambda$ is a non-zero multiple of $\sigma$.

By hypothesis, the vector $w\lambda-\lambda$ generates an extremal ray of $\mathcal{C}$,
while by Proposition \ref{prop: lambda dominanza in natura} we may write
$w\lambda-\lambda = \sum_{\sigma \in \Phi^-(\lambda)} a_\sigma\sigma$ with $a_\sigma \geqslant 0$ for all $\sigma$.
Since $\Phi^-(\lambda) \subset \mathcal{C}$, it follows that there exists only one
$\tau\in\Phi^-(\lambda)$ with $a_\tau \neq 0$.
In particular, $w\lambda - \lambda$ is a multiple of $\tau$,
hence $s_\tau \lambda$ coincides with $w\lambda$ because they are both vertices of $\mathcal{P}(\lambda)$.
\end{proof}

If $\alpha \in \Delta$, denote $\Delta(\alpha) \subset \Delta$ the connected component
containing $\alpha$ and denote $r_\alpha$ the rank of $\Delta(\alpha)$.
If $\beta \in \Delta(\alpha)$, denote $d(\alpha,\beta) \in \mathbb{N}$
the distance between $\alpha$ and $\beta$ in the Dynkin diagram of $\Delta(\alpha)$.
Denote by $I(\alpha, \beta) \subset \Delta$ the minimal connected subset
containing both $\alpha$ and $\beta$, so that $|I(\alpha, \beta)| = d(\alpha, \beta)+1$.
We now describe a property of the maximal elements in $\PQ^+(\lambda)$ w.r.t. $\leqslant^\lambda$
which will be one of the main tools in the characterization of the normality of a simple linear compactification of $G$. First we need an auxiliary lemma.

\begin{lem}	\label{lem: coefficienti mu}
Let $\lambda \in \mathcal{X}(T)^+$ and let $\mu \in \PQ^+(\lambda)$.
Let $\alpha \in \Supp(\lambda)$ be such that
$\langle \lambda - \mu, \alpha^\vee \rangle > r_\alpha$
and denote $S(\alpha) \subset \big(\Delta(\alpha) \smallsetminus \Supp(\mu)\big) \cup \{\alpha\}$
the connected component of $\alpha$.
Fix $\beta$ an extremal root of $S(\alpha)$, set $I(\alpha,\beta) = \{\gamma_0, \gamma_1, \ldots, \gamma_k\}$
(where $k = d(\alpha,\beta)$ and where the numbering is defined by $\gamma_0 = \alpha$ and
$\langle \gamma_i, \gamma_{i-1}^\vee \rangle \neq 0$ for every $i = 1, \ldots, k$)
and denote $(\lambda - \mu )\bigr|_{I(\alpha, \beta)} = \sum_{i=0}^k c_i \gamma_i$.
\begin{itemize}
	\item[i)] 	Set $c_{k+1} = 0$. For every $i \leqslant k$, it holds
	$$
		c_i \geqslant \frac{(i+1) c_{i+1} + r_\alpha + 1}{i+2}
	$$
	\item[ii)] For every $i\leq k$ it holds
	$$
		c_i \geqslant k-i+1 + \frac{r_\alpha - k -1}{i+2}
	$$
\end{itemize}
\end{lem}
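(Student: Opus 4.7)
The plan is to translate the dominance hypotheses into linear inequalities on the coefficients $c_i$, show that together they force a discrete concavity property of the sequence $(c_0,\ldots,c_{k+1})$ with $c_{k+1}=0$, and conclude by a telescoping estimate.

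First I would write $\lambda-\mu=\sum_{\gamma\in\Delta}a_\gamma\,\gamma$ with all $a_\gamma\in\mathbb{Q}_{\geqslant 0}$. Since $I(\alpha,\beta)$ is the unique minimal connected subset joining $\alpha$ and $\beta$ in the tree $\Delta(\alpha)$, and $S(\alpha)$ is connected and contains both endpoints, we have $I(\alpha,\beta)\subset S(\alpha)$; hence $\gamma_j\notin\Supp(\mu)$ for all $j\geqslant 1$, so $\langle \lambda-\mu,\gamma_j^\vee\rangle=\langle \lambda,\gamma_j^\vee\rangle\geqslant 0$, while for $j=0$ the hypothesis is $\langle \lambda-\mu,\alpha^\vee\rangle\geqslant r_\alpha+1$. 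Expanding each pairing along $\Delta$ and using that the Cartan integers $\langle \gamma,\gamma_j^\vee\rangle$ are negative integers (of absolute value $\geqslant 1$) for $\gamma\neq\gamma_j$ adjacent to $\gamma_j$, and that the coefficients $a_\gamma$ and $c_j$ are non-negative, I would extract the clean inequalities
\[
 2c_0-c_1\;\geqslant\; r_\alpha+1,\qquad 2c_j-c_{j-1}-c_{j+1}\;\geqslant\; 0 \quad\text{for every } 1\leqslant j\leqslant k.
\]
Here non-simply-laced Cartan integers cause no harm: the bound $|\langle\gamma_{j\pm 1},\gamma_j^\vee\rangle|\geqslant 1$ together with $c_{j\pm 1}\geqslant 0$ is enough to pass to the simply-laced form, and the contributions from simple roots outside $I(\alpha,\beta)$ only strengthen the inequality since they enter with non-positive sign.

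For part (i), introduce the finite differences $d_j=c_{j-1}-c_j$ for $1\leqslant j\leqslant k+1$. The second family rewrites as $d_{j+1}\geqslant d_j$, so $d$ is non-decreasing, while the first becomes $c_0+d_1\geqslant r_\alpha+1$. A direct manipulation gives
\[
 (i+2)c_i-(i+1)c_{i+1}\;=\;c_{i+1}+(i+2)d_{i+1},
\]
so the goal reduces to $c_{i+1}+(i+2)d_{i+1}\geqslant c_0+d_1$. Writing $c_{i+1}=d_{i+2}+\cdots+d_{k+1}$ and $c_0=d_1+\cdots+d_{k+1}$, the difference is
\[
 (i+2)d_{i+1}-2d_1-d_2-\cdots-d_{i+1},
\]
and using $(i+1)d_{i+1}\geqslant d_1+\cdots+d_{i+1}$ (since $d$ is non-decreasing) this is bounded below by $d_{i+1}-d_1\geqslant 0$, which closes part (i).

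For part (ii) I would proceed by downward induction on $i$. The base case $i=k$ is (i) with $c_{k+1}=0$, giving $(k+2)c_k\geqslant r_\alpha+1$, which is exactly $c_k\geqslant 1+(r_\alpha-k-1)/(k+2)$. For the inductive step, substituting the bound for $c_{i+1}$ into (i) and simplifying, the discrepancy between the two sides reduces to $(i+1)(r_\alpha-k-1)/(i+3)$, which is $\geqslant 0$ because $|I(\alpha,\beta)|=k+1\leqslant r_\alpha$. The main obstacle is really the first step: one has to convince oneself that the clean simply-laced-shaped inequalities survive in every type, and after that the argument is essentially one-dimensional.
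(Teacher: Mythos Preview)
Your proof is correct and follows essentially the same route as the paper: you extract the same two key inequalities $2c_0-c_1\geqslant r_\alpha+1$ and $2c_j\geqslant c_{j-1}+c_{j+1}$ from dominance, and part~(ii) is argued identically by downward induction. The only cosmetic difference is that for part~(i) the paper runs a forward induction on $i$ rather than your direct finite-difference argument, but the content is the same.
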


\begin{proof}
i) We prove the claim by induction on $i$.
The case $i = 0$ follows immediately by the inequality
$$
	r_\alpha +1  \leqslant \langle \lambda - \mu, \alpha^\vee \rangle \leqslant 2c_0 - c_1,
$$
so we assume $i >0$. Notice that
$$
	\langle \lambda , \gamma_i^\vee \rangle + c_{i-1} - 2 c_i + c_{i+1} \leqslant \langle \mu, \gamma_i^\vee \rangle = 0:
$$
hence by the inductive hypothesis we get
$$
	2c_i \geqslant c_{i-1} + c_{i+1} \geqslant \frac{i c_i + r_\alpha + 1}{i+1} + c_{i+1}
$$
and the inequality follows.

ii) We prove the claim by induction on $k-i$. If $i=k$, then by part i) we get
$$
	c_k \geqslant \frac {r_\alpha + 1}{k+2} =
	1 + \frac{r_\alpha - k -1}{k+2}.
$$

Assume $i < k$. Then by part i) together with the hypotheses we get
\[
	c_i \geqslant
	\frac{(i+1) c_{i+1} + r_\alpha + 1}{i+2}
	\geqslant \frac{(i+1) (k-i) + r_\alpha + 1}{i+2} =
	k-i+1 + \frac{r_\alpha - k - 1}{i+2}.	\qedhere
\]
\end{proof}

\begin{table}
\begin{center}
\begin{tabular}{c|c}
type of $\Phi$ & highest short root \\
\hline $\mathsf{A}_r$ & $\alpha_1+\cdots+\alpha_r$ \\
$\mathsf{B}_r$ & $\alpha_1+\cdots+\alpha_r$ \\
$\mathsf{C}_r$ & $\alpha_1+2(\alpha_2+\cdots+\alpha_{r-1})+\alpha_r$ \\
$\mathsf{D}_r$ & $\alpha_1+2(\alpha_2+\cdots+\alpha_{r-2})+\alpha_{r-1}+\alpha_r$ \\
$\mathsf{E}_6$ & $\alpha_1+2\alpha_2+2\alpha_3+3\alpha_4+2\alpha_5+\alpha_6$ \\
$\mathsf{E}_7$ & $2\alpha_1+2\alpha_2+3\alpha_3+4\alpha_4+3\alpha_5+2\alpha_6+\alpha_7$ \\
$\mathsf{E}_8$ & $2\alpha_1+3\alpha_2+4\alpha_3+6\alpha_4+5\alpha_5+4\alpha_6+3\alpha_7+2\alpha_8$ \\
$\mathsf{F}_4$ & $\alpha_1+2\alpha_2+3\alpha_3+2\alpha_4$ \\
$\mathsf{G}_2$ & $2\alpha_1+\alpha_2$ \\
\hline
\end{tabular}
\end{center}
\caption{Highest short roots} \label{tab: highest short roots}
\end{table}

\begin{prop}		\label{prop: elementi massimali}
Let $\lambda\in \mathcal{X}(T)^+$ and let $\mu \in \PQ^+(\lambda)$ be maximal w.r.t. $\leqslant^\lambda$. Then $\langle \lambda - \mu , \alpha^\vee \rangle \leqslant r_\alpha$ for every
$\alpha \in \Delta$.
\end{prop}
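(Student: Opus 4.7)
I would argue by contrapositive: assuming $\langle \lambda-\mu,\alpha^\vee\rangle > r_\alpha$ for some $\alpha\in\Delta$, I will exhibit a positive root $\tau\in\Phi^+(\lambda)$ for which $\nu := \mu + \tau$ lies in $\PQ^+(\lambda)$. Since $\tau\in\mathbb{N}[\Phi^+(\lambda)]$, the element $\nu$ then strictly dominates $\mu$ in the order $\leqslant^\lambda$, contradicting maximality of $\mu$. The case $\alpha\notin\Supp(\lambda)$ is immediate: then $\langle\lambda,\alpha^\vee\rangle=0$ forces $\langle\mu,\alpha^\vee\rangle<-r_\alpha\leqslant 0$, contradicting dominance of $\mu$. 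From now on $\alpha\in\Supp(\lambda)$, and Lemma \ref{lem: coefficienti mu} applies.

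Let $S(\alpha)$ together with an extremal root $\beta$ of $S(\alpha)$ and the associated path $\gamma_0=\alpha,\gamma_1,\ldots,\gamma_k=\beta$ be as in that lemma. Since $I(\alpha,\beta)\subset S(\alpha)\subset \Delta(\alpha)$, we have $k+1\leqslant r_\alpha$, hence part~(ii) yields $c_i\geqslant k-i+1$ for every $0\leqslant i\leqslant k$. For $\tau$ I would take the highest short root of the sub-root system of $\Phi$ generated by $S(\alpha)$, read off from Table \ref{tab: highest short roots} together with the type of the subdiagram $S(\alpha)$: this is a positive root of $\Phi$, dominant in its own subsystem, with support exactly $S(\alpha)$. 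Since $\alpha\in\Supp_\Delta(\tau)\cap\Supp(\lambda)$, we get $\tau\in\Phi^+(\lambda)$, and the inequality $\tau\leqslant_\mathbb{Q}\lambda-\mu$ follows by matching the coefficients of $\tau$ at each node of $S(\alpha)$ against the bounds of Lemma \ref{lem: coefficienti mu} (using part~(i) at interior nodes and the $(r_\alpha-k-1)/(i+2)$ correction to control any branches).

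It remains to check that $\mu+\tau$ is dominant. For $\gamma\in S(\alpha)$, $\tau$ is dominant in its subsystem and $\mu$ is already dominant, so $\langle\mu+\tau,\gamma^\vee\rangle\geqslant 0$; for $\gamma\in\Delta$ not adjacent to $S(\alpha)$, $\langle\tau,\gamma^\vee\rangle=0$; and for $\gamma\in\Delta(\alpha)\smallsetminus S(\alpha)$ adjacent to $S(\alpha)$, the very definition of $S(\alpha)$ forces $\gamma\in\Supp(\mu)$, so $\langle\mu,\gamma^\vee\rangle\geqslant 1$ compensates the $-1$ coming from $\tau$ in the simply-laced case. The main obstacle I anticipate is precisely this last step in the non-simply-laced setting: whenever $\gamma$ sits on a double or triple bond attached to a simple root of $S(\alpha)$ whose coefficient in $\tau$ exceeds $1$, the bare inequality $\langle\mu,\gamma^\vee\rangle\geqslant 1$ is not enough, and one has to re-choose $\beta$ so as to land at a simply-laced end of the Dynkin diagram, or replace $\tau$ with a smaller positive root — typically the highest root of a type-$\mathsf{A}$ subsystem of $S(\alpha)$ containing $\alpha$, whose coefficients are all $1$. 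This type-by-type verification, guided by Table \ref{tab: highest short roots}, is the only substantive step beyond Lemma \ref{lem: coefficienti mu}.
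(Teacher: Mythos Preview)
Your strategy is essentially the paper's: argue by contrapositive, take $\tau=\theta_\alpha$ the highest short root of the subsystem on $S(\alpha)$, use Lemma~\ref{lem: coefficienti mu} to show $\theta_\alpha\leqslant_\mathbb{Q}\lambda-\mu$, and then check $\mu+\theta_\alpha$ is dominant. However, your handling of the boundary dominance check contains a genuine gap.

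The problem is not that the coefficient of $\tau$ at the boundary vertex exceeds~$1$; it is that the \emph{pairing} $\langle\tau,\gamma^\vee\rangle$ can be $-2$ even when that coefficient equals~$1$. Concretely, take $\Delta(\alpha)$ of type $\mathsf{B}_r$ with $\alpha=\alpha_p$ for some $p<r$, and suppose $\langle\mu,\alpha_r^\vee\rangle=1$. Then $S(\alpha)=\{\alpha_p,\ldots,\alpha_{r-1}\}$ is already of type~$\mathsf{A}$, so $\tau=\theta_\alpha=\alpha_p+\cdots+\alpha_{r-1}$ has all coefficients equal to~$1$; yet $\langle\alpha_{r-1},\alpha_r^\vee\rangle=-2$, so $\langle\mu+\tau,\alpha_r^\vee\rangle=1-2=-1$. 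Neither of your proposed fixes helps here: there is no other extremal vertex of $S(\alpha)$ to route to, and replacing $\tau$ by any smaller root supported in $S(\alpha)$ and containing $\alpha$ creates a new boundary vertex \emph{inside} $S(\alpha)\smallsetminus\{\alpha\}$, where $\langle\mu,\cdot^\vee\rangle=0$ by the definition of $S(\alpha)$, and dominance fails again. The paper resolves this case differently: it observes (again via Lemma~\ref{lem: coefficienti mu}) that the coefficient of $\alpha_r$ in $\lambda-\mu$ is at least~$1$, so one can step \emph{outside} $S(\alpha)$ and use the root $\theta_\alpha+\alpha_r\in\Phi^+(\lambda)$, for which $\mu+\theta_\alpha+\alpha_r$ is dominant.

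A secondary point: for $S(\alpha)$ of type $\mathsf{E}_6$ or $\mathsf{E}_7$ the crude bound $c_i\geqslant k-i+1$ does not reach the coefficients $3$ and $4$ appearing in the highest root (Table~\ref{tab: highest short roots}); the paper uses the sharper inequality $c_i>d(\alpha_i,\beta)+1$ together with integrality of specific $c_i$ coming from the description of fundamental weights. Your sketch passes over this, though it is routine once noticed.
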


\begin{proof}
We will prove the proposition by showing that,
if $\alpha \in \Supp(\lambda)$ is such that
$\langle \lambda - \mu, \alpha^\vee \rangle > r_\alpha$,
then there exists a weight $\mu' \in \PQ^+(\lambda)$
such that $\mu' - \mu \in \Phi^+(\lambda)$, contradicting the maximality of $\mu$.
Fix such a root $\alpha$, denote
$S(\alpha) \subset \big(\Delta(\alpha) \smallsetminus \Supp(\mu)\big) \cup \{\alpha\}$
its connected component and denote $\theta_\alpha \in \mathbb{N}[S(\alpha)]$
the highest short root of the root subsystem associated to $S(\alpha)$.

Suppose that the root lattice $\mathbb{Z}[\Delta(\alpha)]$ equals
the weight lattice of $\Delta(\alpha)$.
Then by Proposition \ref{prop: lambda dominanza in natura} it follows
$\mu \bigr|_{\Delta(\alpha)} \leqslant^\lambda \lambda \bigr|_{\Delta(\alpha)}$,
so that the maximality of $\mu$ implies the equality
$\lambda \bigr|_{\Delta(\alpha)} = \mu \bigr|_{\Delta(\alpha)}$
and no root such as $\alpha$ can exist.
In particular, this excludes that $\Delta(\alpha)$ is of type $\mathsf{E}_8$, $\mathsf{F}_4$ or $\mathsf{G}_2$.

Set $r = r_\alpha$ and denote $\Delta(\alpha) = \{\alpha_1, \ldots, \alpha_r\}$, where the numbering is that of \cite{Bo}, and set
$$
	(\lambda - \mu )\bigr|_{\Delta(\alpha)}= \sum_{i=1}^r a_i \alpha_i.
$$
We claim that $\theta_\alpha \leqslant_\mathbb{Q} \lambda - \mu$. Let indeed $\alpha_i \in S(\alpha)$ and denote
$$
	d(\alpha_i) = \; \min \{d(\alpha_i,\beta) \, : \, \beta \text{ is an extremal root of } S(\alpha)\}.
$$
If $\beta$ is an extremal root of $S(\alpha)$ such that $\alpha_i \in I(\alpha,\beta)$, then Lemma \ref{lem: coefficienti mu} ii) implies that
\begin{equation} \label{eqn: elementi massimali}
	a_i \geqslant d(\alpha_i,\beta) + 1 + \frac{r - d(\alpha,\beta) -1}{d(\alpha,\alpha_i) + 1}.
\end{equation}
In particular we get $a_i \geqslant d(\alpha_i) +1$, which implies that
$a_i \geqslant 1$ for every $\alpha_i \in S(\alpha)$ and
$a_i \geqslant 2$ for every $\alpha_i \in S(\alpha)$
which is not extremal in $S(\alpha)$.
Therefore by Table \ref{tab: highest short roots} it follows that
$\theta_\alpha \leqslant_\mathbb{Q} \lambda - \mu$ whenever $S(\alpha)$ is of type $\mathsf{A}$,$\mathsf{B}$,$\mathsf{C}$ or $\mathsf{D}$.

Suppose that $S(\alpha)$ is of type $\mathsf{E}$. Then
$d(\alpha,\beta) +1 < r$ for every extremal root $\beta \in S(\alpha)$, therefore
inequality (\ref{eqn: elementi massimali}) implies that
\begin{equation} \label{eqn: elementi massimali 2}
a_i > d(\alpha_i,\beta) + 1
\end{equation}
for every $\alpha_i \in S(\alpha)$. Suppose that $S(\alpha)$ is of type $\mathsf{E}_6$:
then by the description of the fundamental weights
we get that $a_2,a_4 \in \mathbb{Z}$,
hence inequality (\ref{eqn: elementi massimali 2})
implies $a_2 \geqslant 2$ and $a_4 \geqslant 3$,
and by Table \ref{tab: highest short roots}
it follows $\theta_\alpha \leqslant_\mathbb{Q} \lambda - \mu$.

Suppose finally that $S(\alpha)$ is of type $\mathsf{E}_7$.
Then by the description of the fundamental weights
we get that $a_1, a_3, a_4 \in \mathbb{Z}$ and $a_2 \in \frac{1}{2}\mathbb{Z}$,
hence inequality (\ref{eqn: elementi massimali 2}) implies
$a_1 \geqslant 2$, $a_2 \geqslant 3/2$, $a_3 \geqslant 3$, $a_4 \geqslant 3$.
Therefore it follows from Table \ref{tab: highest short roots} that
$\theta_\alpha \leqslant_\mathbb{Q} \lambda - \mu$ unless $a_2 = 3/2$ or $a_4=3$.
To show that $a_4 > 3$, notice that we may choose
the extremal root $\beta \in S(\alpha)$ in such a way that $\alpha_4 \in I(\alpha,\beta)$
and $d(\alpha_4,\beta) \geqslant 2$: therefore inequality
(\ref{eqn: elementi massimali 2}) implies $a_4 > 3$.
To show that $a_2 >3/2$, suppose first that $\alpha \neq \alpha_2$:
since $\Supp(\mu) \cap S(\alpha) \subset \{\alpha\}$, it follows that
$$
	\langle \lambda , \alpha_2^\vee \rangle - 2a_2 + a_4 = \langle \mu, \alpha_2^\vee \rangle = 0,
$$
whence $a_2 \geqslant \frac{1}{2}a_4 \geqslant 2$. Suppose now that $\alpha = \alpha_2$:
then we may choose as extremal root $\beta = \alpha_7$
and inequality (\ref{eqn: elementi massimali 2})
implies that $a_2 > d(\alpha_2,\alpha_7) +1 = 5$.

We proved so far that $\theta_\alpha \leqslant_\mathbb{Q} \lambda - \mu$.
Since $\mu$ is maximal w.r.t $\leqslant^\lambda$,
the weight $\mu + \theta_\alpha$ must be non-dominant
and an easy case-by-case consideration shows that $\Delta(\alpha)$
is of type $\mathsf{B}_r$, that $S(\alpha) = \{\alpha_p, \ldots, \alpha_{r-1}\}$ for some $p < r$
and that $\langle \mu , \alpha_r^\vee \rangle = 1$, and in particular it follows
$$
	\langle \lambda,\alpha_r^\vee \rangle + 2a_{r-1} - 2a_r = \langle \mu,\alpha_r^\vee\rangle = 1.
$$
On the other hand inequality (\ref{eqn: elementi massimali}) implies $a_{r-1} > 1$,
and being $a_{r-1} \in \frac{1}{2}\mathbb{Z}$ it follows that $a_{r-1} \geqslant 3/2$.
Thus we get $2a_r \geqslant 2a_{r-1} - 1 \geqslant 2$
and it follows that $\theta_\alpha + \alpha_r \leqslant_\mathbb{Q} \lambda - \mu$:
hence $\mu + \theta_\alpha + \alpha_r \in \PQ^+(\lambda)$,
contradicting the maximality of $\mu$.
\end{proof}

\begin{cor}	\label{cor: elementi massimali}
Let $\lambda\in \mathcal{X}(T)^+$ be such that $\langle \lambda, \alpha^\vee \rangle \geqslant r_\alpha$ for every $\alpha \in \Supp(\lambda) \smallsetminus \Supp(\lambda)^\circ$ and let $\mu
\in \PQ^+(\lambda)$ be maximal w.r.t. $\leqslant^\lambda$. If $\alpha \in \Supp(\lambda)^\circ$, then $\langle \lambda - \mu , \alpha^\vee \rangle \leqslant 1$.
\end{cor}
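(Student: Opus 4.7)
The plan is to argue by contradiction, in the same style as the proof of Proposition~\ref{prop: elementi massimali}. Assume that $\mu \in \PQ^+(\lambda)$ is maximal with respect to $\leqslant^\lambda$ and that $\langle \lambda - \mu, \alpha^\vee \rangle \geqslant 2$ for some $\alpha \in \Supp(\lambda)^\circ$; I aim to produce a nonzero element $\tau \in \mathbb{N}[\Phi^+(\lambda)]$ such that $\mu + \tau \in \PQ^+(\lambda)$, contradicting the maximality of $\mu$.

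Writing $\lambda - \mu = \sum c_i \alpha_i$ with $c_i \geqslant 0$, the hypothesis expands as $2c_\alpha - \sum_{\gamma \sim \alpha} n_{\gamma \alpha}\, c_\gamma \geqslant 2$, where $n_{\gamma\alpha} = -\langle \gamma, \alpha^\vee\rangle \geqslant 0$; since every $c_\gamma \geqslant 0$, this forces $c_\alpha \geqslant 1$ and hence $\mu + \alpha \leqslant_\mathbb{Q} \lambda$. The weight $\mu + \alpha$ is automatically dominant at $\alpha$ and at all simple roots not adjacent to $\alpha$, so the only possible obstruction is $\langle \mu, \gamma^\vee \rangle < n_{\alpha \gamma}$ at some neighbor $\gamma$ of $\alpha$ in the Dynkin diagram. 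If no such failure occurs, the natural choice $\tau = \alpha$ already works.

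Otherwise, I correct the failures by iteratively enlarging $\tau$: whenever $\mu + \tau$ fails to be dominant at some $\gamma$, replace $\tau$ by $\tau + \gamma$ and repeat. Because $\alpha \in \Supp(\lambda)^\circ$, every neighbor of $\alpha$ lies in $\Supp(\lambda)$, and at every subsequent step the added simple root sits in the same connected component $\Delta(\alpha)$; moreover $\alpha$ never leaves the support of $\tau$, so $\tau$ stays in $\mathbb{N}[\Phi^+(\lambda)]$ throughout. This is the standard Weyl-chamber push procedure and it produces a dominant weight $\mu + \tau$ in finitely many steps.

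The main obstacle is to ensure that the iteration terminates with $\mu + \tau \leqslant_\mathbb{Q} \lambda$, i.e. that the coefficients $c_i$ of $\lambda - \mu$ contain enough room to absorb each added simple root. This is where the hypothesis $\langle \lambda, \beta^\vee \rangle \geqslant r_\beta$ for every $\beta \in \Supp(\lambda) \smallsetminus \Supp(\lambda)^\circ$ enters, combined with the bound $\langle \lambda - \mu, \beta^\vee \rangle \leqslant r_\beta$ of Proposition~\ref{prop: elementi massimali} applied at each simple root; together they provide the slack needed to continue the push all the way to a dominant weight without exiting $\mathcal{P}(\lambda)$. The verification reduces to a case analysis on the type of $\Delta(\alpha)$: the types $\mathsf{E}_8$, $\mathsf{F}_4$, $\mathsf{G}_2$ are immediate from Proposition~\ref{prop: lambda dominanza in natura} and maximality, which force $\mu|_{\Delta(\alpha)} = \lambda|_{\Delta(\alpha)}$ as in the opening step of the proof of Proposition~\ref{prop: elementi massimali} and contradict $\langle \lambda - \mu, \alpha^\vee \rangle \geqslant 2$ outright, while the remaining types $\mathsf{A}$, $\mathsf{B}$, $\mathsf{C}$, $\mathsf{D}$, $\mathsf{E}_6$, $\mathsf{E}_7$ require a repetition of the kind of computation appearing in Lemma~\ref{lem: coefficienti mu}, suitably weakened to start from the bound $\langle \lambda - \mu, \alpha^\vee \rangle \geqslant 2$ in place of $> r_\alpha$.
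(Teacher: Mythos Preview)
Your proposal is a sketch of a plan rather than a proof, and it contains a genuine gap. The inference ``$\alpha$ never leaves the support of $\tau$, so $\tau$ stays in $\mathbb{N}[\Phi^+(\lambda)]$ throughout'' is not valid: having $\alpha \in \Supp_\Delta(\tau)$ does not by itself force $\tau \in \mathbb{N}[\Phi^+(\lambda)]$, since $\tau$ is a sum of simple roots and need not decompose as a sum of positive roots each of whose support meets $\Supp(\lambda)$. Concretely, once the push reaches a boundary root $\gamma \in \Supp(\lambda)\smallsetminus\Supp(\lambda)^\circ$, the next correction may add a simple root $\gamma'\notin\Supp(\lambda)$ with multiplicity, and there is no reason the resulting $\tau$ admits a decomposition into roots of $\Phi^+(\lambda)$. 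Beyond this, the ``main obstacle'' you flag --- that the push terminates with $\mu+\tau\leqslant_\mathbb{Q}\lambda$ --- is not addressed: you assert that the hypothesis on $\lambda$ together with Proposition~\ref{prop: elementi massimali} provides enough slack and that the remaining verification is ``a repetition of the kind of computation appearing in Lemma~\ref{lem: coefficienti mu}, suitably weakened'', but that lemma relies essentially on the starting bound $>r_\alpha$; it is not at all clear that the weakened bound $\geqslant 2$ survives the argument, and you do not attempt it.

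The paper's proof is completely different and avoids all case analysis. The key idea is to pass to a multiple $\lambda'=n\lambda$: since $\beta\in\Supp(\lambda)^\circ$, every neighbor of $\beta$ lies in $\Supp(\lambda)$, so for $n$ large enough the weight $\mu'+\beta=\mu+(n-1)\lambda+\beta$ is dominant and lies in $\PQ^+(\lambda')$ strictly above $\mu'=\mu+(n-1)\lambda$ for $\leqslant^\lambda$. Choose $\nu'\in\PQ^+(\lambda')$ maximal for $\leqslant^\lambda$ with $\mu'<^\lambda\nu'$. Since $\lambda'-\nu'<\lambda-\mu$ in the dominance order, one may assume by induction on this height (enlarging $n$ if necessary) that $\langle\lambda'-\nu',\alpha^\vee\rangle\leqslant 1$ for all $\alpha\in\Supp(\lambda)^\circ$; combined with Proposition~\ref{prop: elementi massimali} at the boundary roots and the hypothesis $\langle\lambda,\alpha^\vee\rangle\geqslant r_\alpha$ there, the translate $\nu=\nu'-(n-1)\lambda$ is dominant and lies in $\PQ^+(\lambda)$ with $\mu<^\lambda\nu$, contradicting the maximality of $\mu$. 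This trick of shifting by $(n-1)\lambda$ to force dominance, followed by induction and a translation back, is precisely what your push procedure is trying to accomplish directly, but it sidesteps both of the difficulties above in a single clean stroke.
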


\begin{proof}
Suppose that $\mu \in \PQ^+(\lambda)$ is maximal w.r.t. $\leqslant^\lambda$ and denote $\lambda - \mu = \sum_{\alpha \in \Delta} c_\alpha \alpha$. Let $\beta \in
\Supp(\lambda)^\circ$ be such that $\langle \lambda - \mu , \beta^\vee \rangle \geqslant 2$: since $\langle \lambda - \mu, \beta^\vee \rangle \leqslant 2c_\beta$, it follows
then $c_\beta \geqslant 1$. Since $\mu$ is maximal, the weight $\mu + \beta$ is not dominant, on the other hand since $\beta \in \Supp(\lambda)^\circ$ there exists
$n>1$ such that the weight $\mu + (n-1) \lambda + \beta$ is dominant. Denote $\lambda' = n \lambda$ and $\mu' = \mu + (n-1) \lambda$ and let $\nu' \in \PQ^+(\lambda)$ be
maximal w.r.t. $\leqslant^\lambda$ such that $\mu' <^\lambda \nu' \leqslant_\mathbb{Q} \lambda'$. Since $\lambda' - \nu' < \lambda - \mu$, proceeding inductively we may assume that,
taking $n$ big enough, the inequality $\langle \lambda' - \nu', \alpha^\vee \rangle \leqslant 1$ holds for every $\alpha \in \Supp(\lambda)^\circ$. Since $\langle \lambda,
\alpha^\vee \rangle \geqslant r_\alpha$ for every $\alpha \in \Supp(\lambda) \smallsetminus \Supp(\lambda)^\circ$, by Proposition \ref{prop: elementi massimali} it follows then
that the weight $\nu = \lambda + \nu' - \lambda'$ is dominant, hence $\mu <^\lambda \nu \leqslant^\lambda_\mathbb{Q} \lambda$ and we get a contradiction since $\mu$ was supposed to
be maximal.
\end{proof}

\begin{dfn}
Define the following sets:
\begin{align*}
	\PQm{\lambda} &= \left\{ \right. \mu \in \PQ^+(\lambda) \, : \, \mu \text{ is maximal w.r.t. } \leqslant^\lambda \left. \right\} \cup \LB{\lambda}, \\
	\LBQ{\lambda} =  \left\{ \right. &\mu \in (\PQ^+(\lambda) \smallsetminus \Pi^+(\lambda)) \cup \LB{\lambda} \, : \, \mu \text{ is maximal w.r.t. } \leqslant^\lambda_\mathbb{Q} \left.
\right\}.
\end{align*}
We call $\LBQ{\lambda}$ the set of \textit{little brothers} of $\lambda$ \textit{w.r.t.} $G$ and we set $\LBQb{\lambda} = \LBQ{\lambda} \cup \{\lambda\}$.
\end{dfn}

Define the following sets:
$$
	\mathcal{H}^{\mathrm{m}}_G(\lambda) = \left\{\mu - \lambda \, : \, \mu \in \PQm{\lambda} \right\},
	\qquad \quad \qquad
	\mathcal{H}_G(\lambda) = \left\{\mu - \lambda \, : \, \mu \in \LBQ{\lambda} \right\}.
$$

\begin{cor} \label{cor: elementi massimali 2}
If $\lambda \in \mathcal{X}(T)^+$, set $\overline\lambda = \sum_{\alpha\in \Supp(\lambda)} n_\alpha \omega_\alpha$, where
$$
	n_\alpha = \left\{ \begin{array}{ll}
						1	& \text{ if } \alpha \in \Supp(\lambda)^\circ \\
						r_\alpha & \text{ if } \alpha \in \Supp(\lambda) \smallsetminus \Supp(\lambda)^\circ
			\end{array} \right.
$$
If $\langle \lambda, \alpha^\vee \rangle \geqslant r_\alpha$ for every $\alpha \in \Supp(\lambda)\smallsetminus \Supp(\lambda)^\circ$,
then $\mathcal{H}^{\mathrm{m}}_G(\lambda) = \mathcal{H}^{\mathrm{m}}_G(\overline\lambda)$ and $\mathcal{H}_G(\lambda) = \mathcal{H}_G(\overline\lambda)$.
\end{cor}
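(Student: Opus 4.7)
The key observation is that $\Supp(\lambda) = \Supp(\overline\lambda)$ (immediate from the definition of $\overline\lambda$), so the partial orders $\leqslant^\lambda$ and $\leqslant^\lambda_\mathbb{Q}$ coincide respectively with $\leqslant^{\overline\lambda}$ and $\leqslant^{\overline\lambda}_\mathbb{Q}$. Moreover, the coefficient hypothesis yields that $\lambda - \overline\lambda = \sum_{\alpha \in \Supp(\lambda)} (\langle\lambda,\alpha^\vee\rangle - n_\alpha) \omega_\alpha$ is a dominant weight supported on $\Supp(\lambda)$. I would then study the translation map $\tau \colon \mu \mapsto \mu + (\overline\lambda - \lambda)$, which preserves both the differences ($\mu - \lambda = \tau(\mu) - \overline\lambda$) and the partial orders $\leqslant^\lambda$ and $\leqslant^\lambda_\mathbb{Q}$, since these are translation-invariant.

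The main step is to show that $\tau$ restricts to a bijection between the elements of $\PQ^+(\lambda)$ which are maximal w.r.t. $\leqslant^\lambda$ and the analogous set for $\overline\lambda$. Given such a maximal $\mu$, I would verify that $\mu' := \tau(\mu)$ is dominant by checking $\langle \mu', \alpha^\vee \rangle \geqslant 0$ separately on each $\alpha \in \Delta$: if $\alpha \notin \Supp(\lambda)$ then $\langle \mu', \alpha^\vee \rangle = \langle \mu, \alpha^\vee \rangle \geqslant 0$; if $\alpha \in \Supp(\lambda)^\circ$ then Corollary~\ref{cor: elementi massimali} yields $\langle \mu', \alpha^\vee \rangle = 1 - \langle \lambda - \mu, \alpha^\vee \rangle \geqslant 0$; and if $\alpha \in \Supp(\lambda) \smallsetminus \Supp(\lambda)^\circ$ then Proposition~\ref{prop: elementi massimali} yields $\langle \mu', \alpha^\vee \rangle = r_\alpha - \langle \lambda - \mu, \alpha^\vee \rangle \geqslant 0$. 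Since moreover $\overline\lambda - \mu' = \lambda - \mu \in \mathbb{Q}^+[\Delta]$, it follows that $\mu' \in \PQ^+(\overline\lambda)$. Conversely, if $\mu'$ is maximal in $\PQ^+(\overline\lambda)$, then $\mu := \mu' + (\lambda - \overline\lambda)$ is dominant (as a sum of two dominant weights) and lies in $\PQ^+(\lambda)$; maximality transports in both directions via the translation-invariance of $\leqslant^\lambda$.

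Combining the previous step with the observation that the differences $\mu - \lambda$ for $\mu \in \LB{\lambda}$ depend only on $\Supp(\lambda)$ (by Definition~\ref{def: twin}), one obtains a bijection $\tau \colon \PQm{\lambda} \to \PQm{\overline\lambda}$ preserving differences, whence $\mathcal{H}^{\mathrm{m}}_G(\lambda) = \mathcal{H}^{\mathrm{m}}_G(\overline\lambda)$. For the second equality, I would observe that $\mu \in \PQ^+(\lambda) \smallsetminus \Pi^+(\lambda)$ if and only if $\tau(\mu) \in \PQ^+(\overline\lambda) \smallsetminus \Pi^+(\overline\lambda)$, because the common difference $\lambda - \mu = \overline\lambda - \tau(\mu)$ lies in $\mathbb{Q}^+[\Delta] \smallsetminus \mathbb{N}[\Delta]$ for one iff the other. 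The crucial refinement needed is the following: if $\mu \in \PQ^+(\lambda) \smallsetminus \Pi^+(\lambda)$ is maximal w.r.t. $\leqslant^\lambda_\mathbb{Q}$ inside $(\PQ^+(\lambda) \smallsetminus \Pi^+(\lambda)) \cup \LB{\lambda}$, then $\mu$ is automatically maximal w.r.t. $\leqslant^\lambda$ in all of $\PQ^+(\lambda)$, because any $\nu \in \PQ^+(\lambda)$ with $\mu <^\lambda \nu$ would satisfy $\lambda - \mu = (\lambda - \nu) + (\nu - \mu) \notin \mathbb{N}[\Delta]$, forcing $\nu \notin \Pi^+(\lambda)$ and thus contradicting the maximality in the prescribed set. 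The dominance check from the previous paragraph then applies to yield the bijection $\LBQ{\lambda} \leftrightarrow \LBQ{\overline\lambda}$.

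The main obstacle is the dominance verification for $\mu'$: the hypothesis $\langle \lambda, \alpha^\vee \rangle \geqslant r_\alpha$ on boundary roots is precisely what is needed both to ensure that $\lambda - \overline\lambda$ is dominant (so that the inverse translation produces dominant weights) and to guarantee that the tight bounds on $\langle \lambda - \mu, \alpha^\vee \rangle$ coming from Proposition~\ref{prop: elementi massimali} and Corollary~\ref{cor: elementi massimali} match $n_\alpha$ exactly, so that dominance is preserved under $\tau$.
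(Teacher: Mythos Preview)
Your approach is essentially the same as the paper's: both use the translation $\mu \mapsto \mu + (\overline\lambda - \lambda)$ and verify dominance of the image via Proposition~\ref{prop: elementi massimali} and Corollary~\ref{cor: elementi massimali}, with the inverse direction handled by the dominance of $\lambda - \overline\lambda$. Where the paper simply declares that the equality $\mathcal{H}_G(\lambda) = \mathcal{H}_G(\overline\lambda)$ ``follows straightforwardly'', you spell out the needed reduction (namely that a $\leqslant^\lambda_\mathbb{Q}$-maximal $\mu \in \PQ^+(\lambda) \smallsetminus \Pi^+(\lambda)$ is automatically $\leqslant^\lambda$-maximal in all of $\PQ^+(\lambda)$), which is a welcome clarification but not a different argument.
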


\begin{proof}
Let $\lambda \in \mathcal{X}(T)^+$ be such that $\langle \lambda, \alpha^\vee \rangle \geqslant r_\alpha$ for every $\alpha \in \Supp(\lambda) \smallsetminus \Supp(\lambda)^\circ$. If $\mu \in
\PQ^+(\lambda)$ is maximal w.r.t. $\leqslant^\lambda$, then by Proposition \ref{prop: elementi massimali} together with  Corollary \ref{cor: elementi massimali} it
follows that $\overline\lambda + \mu -\lambda \in \PQ^+(\overline\lambda)$, and its maximality w.r.t. $\leqslant^\lambda$ follows by the maximality of $\mu$. Similarly, if $\overline\mu \in
\PQ^+(\overline\lambda)$ is maximal w.r.t. $\leqslant^\lambda$, then $\mu = \overline\mu + \lambda - \overline\lambda \in \PQ^+(\lambda)$ is also maximal w.r.t. $\leqslant^\lambda$. Hence we get
$\mathcal{H}^{\mathrm{m}}_G(\lambda) = \mathcal{H}^{\mathrm{m}}_G(\overline\lambda)$, and the equality $\mathcal{H}_G(\lambda) = \mathcal{H}_G(\overline\lambda)$ follows straightforwardly.
\end{proof}

\begin{ex} \label{ex: omega1 in Ar}
Let $G = \mathrm{SL}(r+1)$ and set $\lambda = n\omega_1$. Then $\PQ^+(\lambda)$ is faithful for every $n \geqslant 1$ and the following descriptions hold:
\begin{align*}
	\PQm{\lambda} = &\{(n-i) \omega_1 \, : \, 0 \leqslant i \leqslant \min\{n,r\}\},\\
	&\LBQ{\lambda} = \{(n-1)\omega_1\}.
\end{align*}
Since $\min \{ \langle \mu, \alpha_1^\vee \rangle \, : \, \mu \in \PQm{r\omega_1}\} = 0$ and $\min \{ \langle \mu , \alpha_1^\vee \rangle \, : \, \mu \in \LBQ{r\omega_1}\}
= r-1$, by Corollary \ref{cor: elementi massimali 2} we get $\mathcal{H}^{\mathrm{m}}_G(\lambda) = \mathcal{H}^{\mathrm{m}}_G(r\omega_1)$ if and only if $n\geq r$, while $\mathcal{H}_G(\lambda) = \mathcal{H}_G(r\omega_1)$ for every
$n\geq 1$.
\end{ex}

\begin{ex} \label{ex: omega1 in Br}
Let $G = \mathrm{Spin}(2r+1)$ and set $\lambda = n\omega_1$. Denote $\lfloor \cdot \rfloor$ and $\lceil \cdot \rceil$ the floor and the ceiling functions. Then
$\PQ^+(\lambda)$ is faithful for every $n \geqslant \lceil r/2 \rceil$ and the following descriptions hold:
\begin{align*}
	&\PQm{\lambda} = \{\lfloor n- r/2 \rfloor \omega_1 + \omega_r, (n-1) \omega_1, n\omega_1 \},\\
	& \; \; \LBQ{\lambda} = \{\lfloor n- r/2 \rfloor \omega_1 + \omega_r, \; (n-1) \omega_1 \}.
\end{align*}
Hence by Corollary \ref{cor: elementi massimali 2} we get $\mathcal{H}^{\mathrm{m}}_G(\lambda) = \mathcal{H}^{\mathrm{m}}_G(r\omega_1)$ if and only if $\mathcal{H}_G(\lambda) = \mathcal{H}_G(r\omega_1)$ if and only if $n\geq \lceil
r/2 \rceil$.
\end{ex}

\begin{ex} \label{ex: omega2 in Ar}
Let $G = \mathrm{SL}(r+1)$ and set $\lambda = n\omega_2$. Then $\PQ^+(\lambda)$ is faithful if and only if $n \geqslant 2$ if $r$ is odd, and for every $n \geqslant 1$ if
$r$ is even. Moreover
$$\LBQ{\lambda} = \{\mu_i \, : \, 0 \leqslant i < \min\{n,r\} \},$$ where we set
$$
\mu_i = \left\{ \begin{array}{ll}
		(n-2)\omega_2 + \omega_3  & \text{ if } i = 0	\\
		(i-1) \omega_1 + (n-i) \omega_2 & \text{ if } 1 \leqslant i < \min\{n,r\}
		\end{array} \right.
$$
Since $\min \{ \langle \mu , \alpha_2^\vee \rangle \, : \, \mu \in \LBQ{r\omega_2}\} = 1$, Corollary \ref{cor: elementi massimali 2} implies that $\mathcal{H}_G(\lambda) =
\mathcal{H}_G(r\omega_2)$ if and only if $n\geq r-1$.
\end{ex}

Every little brother $\mu \in \LBQ{\lambda}$ is maximal in $\PQ^+(\lambda) \smallsetminus \{\lambda\}$ w.r.t. $\leqslant^\lambda$, however it may not be maximal w.r.t. $\leqslant$.

\begin{ex} \label{ex: omega3 in A5}
Let $G = \mathrm{SL}(6)$ and set $\lambda = n\omega_3$. Then $\PQ^+(\lambda)$ is faithful if and only if $n \geqslant 2$ and $\LBQ{\lambda} = \{\mu_1, \mu_2, \mu_3, \mu_4,
\mu_5, \mu_6, \mu_7 \}$, where we denote
\begin{itemize}
	\item[-] $\mu_1 = (n-1)\omega_3$,
	\item[-] $\mu_2 = \omega_2 + (n-2)\omega_3$,
	\item[-] $\mu_3 = (n-2)\omega_3 + \omega_4$,
	\item[-] $\mu_4 = (n-2)\omega_3 + \omega_5$,
	\item[-] $\mu_5 = \omega_1 + (n-3)\omega_3$,
	\item[-] $\mu_6 = 2\omega_2 + (n-3)\omega_3 = \mu_4 + \alpha_2$,
	\item[-] $\mu_7 = (n-3)\omega_3 + 2\omega_4 = \mu_5 + \alpha_4$.
\end{itemize}
Since $\min \{\langle \mu, \alpha_3^\vee \rangle \, : \, \mu \in \LBQ{5\omega_3}\}= 2$, Corollary \ref{cor: elementi massimali 2} shows that $\mathcal{H}_G(\lambda) =
\mathcal{H}_G(5\omega_3)$ if and only if $n \geqslant 3$.
\end{ex}

Suppose that $G$ is simply connected of type $\mathsf{C}_r$ with $r\geq 1$ (where we set $\mathsf{C}_2 = \mathsf{B}_2$ and $\mathsf{C}_1 = \mathsf{A}_1$). Then the description of the
set $\LBQ{\lambda}$ is particularly simple.

\begin{prop} \label{prop: LB Sp}
Suppose $G = \mathrm{Sp}(2r)$ with $r\geq 1$ and let $\lambda \in \Lambda^+$. Denote $q = \max\{i \leqslant r \, : \, \alpha_i \in \Supp(\lambda)\}$ and set
$$
	\lambda^\mathrm{lb}_G = \lambda - \sum_{i=q}^{r-1} \alpha_i - \frac{1}{2} \alpha_r = \lambda + \omega_{q-1} - \omega_q:
$$
then $\lambda^\mathrm{lb}_G$ is the unique little brother of $\lambda$ w.r.t. $G$.
\end{prop}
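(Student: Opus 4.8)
The plan is to work in the standard coordinates $\varepsilon_1,\dots,\varepsilon_r$ in which the positive roots of $\mathsf{C}_r$ are $\varepsilon_i-\varepsilon_j,\ \varepsilon_i+\varepsilon_j\ (i<j)$ and $2\varepsilon_i$, and in which $\alpha_i=\varepsilon_i-\varepsilon_{i+1}$ for $i<r$, $\alpha_r=2\varepsilon_r$ and $\omega_i=\varepsilon_1+\dots+\varepsilon_i$. Since $\mathrm{Sp}(2r)$ is simply connected, $\mathcal{X}(T)=\mathbb{Z}^r$ and its dominant weights are the partitions $\lambda=(x_1\geqslant\dots\geqslant x_r\geqslant 0)$; one has $\Supp(\lambda)=\{i<r:x_i>x_{i+1}\}\cup\{r:x_r>0\}$, so that $q$ is exactly the number of non-zero parts of $\lambda$. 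An immediate check in these coordinates shows that the two expressions for $\lambda^{\mathrm{lb}}_G$ agree and that $\lambda^{\mathrm{lb}}_G=\lambda-\varepsilon_q$; equivalently $\lambda-\lambda^{\mathrm{lb}}_G=\varepsilon_q=\tfrac12\theta$, where $\theta=2\varepsilon_q=2(\alpha_q+\dots+\alpha_{r-1})+\alpha_r$ is the long positive root whose support contains $\alpha_q\in\Supp(\lambda)$, so $\theta\in\Phi^+(\lambda)$. First I would record the two facts that organize the whole computation: $\mathbb{Z}[\Delta]=\{v\in\mathbb{Z}^r:\sum_i v_i\text{ is even}\}$, so the non-trivial class of $\Lambda/\mathbb{Z}[\Delta]\cong\mathbb{Z}/2$ is detected by the parity of $|\mu|:=\sum_i\mu_i$; and the description $\PQ^+(\lambda)=\{\mu\text{ partition}:\sum_{i\leqslant k}\mu_i\leqslant\sum_{i\leqslant k}\lambda_i\ \forall k\}$ (the dominance order of partitions), inside which $\Pi^+(\lambda)$ is cut out by the extra condition $|\mu|\equiv|\lambda|\pmod 2$. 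In particular $\lambda^{\mathrm{lb}}_G$ is dominant, satisfies $\lambda^{\mathrm{lb}}_G\leqslant_\mathbb{Q}\lambda$ and has $|\lambda^{\mathrm{lb}}_G|=|\lambda|-1$, hence it lies in $\PQ^+(\lambda)\smallsetminus\Pi^+(\lambda)$ and thus in the set defining $\LBQ{\lambda}$.

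Since $\leqslant^\lambda_\mathbb{Q}$ is a partial order on a finite set, to prove $\LBQ{\lambda}=\{\lambda^{\mathrm{lb}}_G\}$ it suffices to show that $\lambda^{\mathrm{lb}}_G$ is a $\leqslant^\lambda_\mathbb{Q}$-upper bound for all of $\big(\PQ^+(\lambda)\smallsetminus\Pi^+(\lambda)\big)\cup\LB{\lambda}$: then it is the maximum, hence the unique maximal element. The contribution of $\LB{\lambda}$ is easy. Reading Definition~\ref{def: twin} off the $\mathsf{C}_r$ diagram, $\LB{\lambda}$ is non-empty only when $\alpha_r\in\Supp(\lambda)$ and $\alpha_{r-1}\notin\Supp(\lambda)$, i.e. when $q=r$ and $x_{r-1}=x_r>0$, and then its only element is $\lambda-\alpha_{r-1}-\alpha_r=\lambda-\varepsilon_{r-1}-\varepsilon_r$; for it one computes $\lambda^{\mathrm{lb}}_G-(\lambda-\varepsilon_{r-1}-\varepsilon_r)=\varepsilon_{r-1}=\tfrac12(2\varepsilon_{r-1})$, and $2\varepsilon_{r-1}\in\Phi^+(\lambda)$ because its support $\{\alpha_{r-1},\alpha_r\}$ meets $\Supp(\lambda)$. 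Thus this adjoint little brother is $<^\lambda_\mathbb{Q}\lambda^{\mathrm{lb}}_G$.

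The heart of the argument is the bound for $\mu\in\PQ^+(\lambda)\smallsetminus\Pi^+(\lambda)$, namely $w:=\lambda^{\mathrm{lb}}_G-\mu\in\mathbb{Q}^+[\Phi^+(\lambda)]$. Writing $L_k=\sum_{i\leqslant k}\lambda_i$ and $M_k=\sum_{i\leqslant k}\mu_i$, the partial sums of $w$ are $W_k=(L_k-M_k)-[k\geqslant q]$ (the last term being $1$ if $k\geqslant q$ and $0$ otherwise). Using $\mu\leqslant_\mathbb{Q}\lambda$ (so $L_k\geqslant M_k$), that $\lambda_i=0$ for $i>q$ (so $L_k=|\lambda|$ for $k\geqslant q$), and that $|\lambda|-|\mu|$ is a positive odd integer (positive since $|\mu|<|\lambda|$, odd by the parity criterion), one gets $W_k\geqslant 0$ for all $k$, with $W_k\geqslant|\lambda|-|\mu|-1\geqslant 0$ for $k\geqslant q$; moreover $w_k=-\mu_k\leqslant 0$ for $k>q$. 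The key geometric point is that every position $b$ with $w_b<0$ has an element of $\Supp(\lambda)$ strictly to its left: for $b>q$ this is $\alpha_q$, while for $b\leqslant q$ a short computation (were there no corner before $b$, then $x_1=\dots=x_b$, forcing $\mu_b\leqslant\mu_1\leqslant x_1=x_b$ and contradicting $w_b<0$) produces a corner in $[1,b-1]$. Consequently $w$ decomposes as a non-negative combination of roots of $\Phi^+(\lambda)$ by a transportation argument: each deficit at a position $b$ is fed from prefix surplus through a root $\varepsilon_a-\varepsilon_b$ with $[a,b-1]\cap\Supp(\lambda)\neq\varnothing$ (admissible by the corner just found), the feasibility of the routing being exactly the cut condition $W_k\geqslant 0$; the residual is a non-negative vector supported on positions $\leqslant q$, which is a non-negative combination of the roots $2\varepsilon_a\ (a\leqslant q)$, all in $\Phi^+(\lambda)$. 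This yields $\mu\leqslant^\lambda_\mathbb{Q}\lambda^{\mathrm{lb}}_G$ and finishes the proof.

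The main obstacle is precisely this last decomposition: one must reroute the simple roots $\alpha_i\notin\Supp(\lambda)$ occurring in the naive expression $w=\sum_{i<r}W_i\alpha_i+\tfrac{W_r}{2}\alpha_r\in\mathbb{Q}^+[\Delta]$ through roots of $\Phi^+(\lambda)$ without destroying positivity, which is where the hypotheses on the $W_k$ and the location of the corners of $\lambda$ are genuinely used. A convenient alternative for the uniqueness half is to observe that a $\leqslant^\lambda_\mathbb{Q}$-maximal element is a fortiori $\leqslant^\lambda$-maximal, and that by the parity criterion no element of $\Pi^+(\lambda)$ can lie $\leqslant^\lambda$-above an element of $\PQ^+(\lambda)\smallsetminus\Pi^+(\lambda)$ (every positive root of $\mathsf{C}_r$ has integral $\alpha_r$-coefficient, whereas crossing from $\Pi^+$ to $\PQ^+\smallsetminus\Pi^+$ changes it by a half-integer); hence such a $\mu$ is $\leqslant^\lambda$-maximal in all of $\PQ^+(\lambda)$, and Proposition~\ref{prop: elementi massimali} bounds $\langle\lambda-\mu,\alpha^\vee\rangle\leqslant r$ for every $\alpha$. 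This confines $\mu$ to a finite, explicitly describable family of partitions, on which one checks directly that $\lambda-\varepsilon_q$ is the only possibility.
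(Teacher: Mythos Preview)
Your argument is correct in outline and the approach is genuinely different from the paper's. The paper proceeds in two steps: first it shows $\mu\leqslant\lambda^{\mathrm{lb}}_G$ in the ordinary dominance order for every $\mu\in\PQ^+(\lambda)\smallsetminus\Pi^+(\lambda)$ (by analyzing the coefficients $a_i$ of $\lambda-\mu$ and using half-integrality of $a_r$), and then it upgrades this to $\mu\leqslant^\lambda\lambda^{\mathrm{lb}}_G$ by a Weyl-group conjugation trick that reduces to Proposition~\ref{prop: lambda dominanza in natura}. You instead work directly in $\varepsilon$-coordinates and produce an explicit decomposition of $\lambda^{\mathrm{lb}}_G-\mu$ as a nonnegative rational combination of roots in $\Phi^+(\lambda)$ via a transportation argument. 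Your route is more elementary and self-contained (no Weyl group, no appeal to Proposition~\ref{prop: lambda dominanza in natura}), while the paper's route yields the slightly stronger conclusion $\mu\leqslant^\lambda\lambda^{\mathrm{lb}}_G$ with integer coefficients and ties back cleanly to the representation-theoretic framework of Section~2.

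There is one point where your exposition is too loose. You assert that a deficit at $b$ can be fed ``through a root $\varepsilon_a-\varepsilon_b$ with $[a,b-1]\cap\Supp(\lambda)\neq\varnothing$, admissible by the corner just found''; but the corner you found lies somewhere in $[1,b-1]$, and nothing guarantees a priori that the source $a$ of the surplus lies to its left. What actually makes the transportation work is the stronger observation that whenever $w_a>0$ and $w_b<0$ with $a<b$, one \emph{must} have $[a,b-1]\cap\Supp(\lambda)\neq\varnothing$: otherwise $\lambda_a=\dots=\lambda_b$, and for $b<q$ the inequalities $\mu_a<\lambda_a=\lambda_b<\mu_b$ contradict that $\mu$ is a partition; for $b=q$ one gets $\mu_a<\lambda_q\leqslant\mu_q$, the same contradiction; and for $b>q$ every source $a$ satisfies $a\leqslant q<b$ so $\alpha_q\in[a,b-1]\cap\Supp(\lambda)$ automatically. (Your parenthetical ``contradicting $w_b<0$'' is also not literally a contradiction at $b=q$, where $w_q=\lambda_q-1-\mu_q$; one needs the extra step that $\mu_q=\lambda_q$ forces $|\mu|=|\lambda|$, against parity.) With these remarks added the transportation argument is complete.
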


\begin{proof}
Suppose first that $\LB{\lambda} \neq \varnothing$ and denote $\lambda^\mathrm{lb}_{\mathrm{ad}}$ the adjoint little brother of $\lambda$. Then $q=r \geqslant 2$ and $\lambda -
\lambda^\mathrm{lb}_{\mathrm{ad}} = \alpha_{r-1} + \alpha_r$, hence
$$
	\lambda^\mathrm{lb}_G - \lambda^\mathrm{lb}_{\mathrm{ad}} = \alpha_{r-1} + \frac{1}{2}\alpha_r = \frac{1}{2}(2\alpha_{r-1} + \alpha_r) \in \mathbb{Q}^+[\Phi^+(\lambda)]
$$
and we get $\lambda^\mathrm{lb}_{\mathrm{ad}} <^\lambda_\mathbb{Q} \lambda^\mathrm{lb}_G$.

Suppose now that $\mu \in \PQ^+(\lambda) \smallsetminus \Pi^+(\lambda)$ and denote $\lambda - \mu = \sum_{i=1}^r a_i \alpha_i$. By the description of the fundamental weights
of $G$, it follows that $a_i \in \mathbb{N}$ for every $i < r$, while $a_r \in \frac{1}{2}\mathbb{N}$: in particular we must have $a_r = \frac{n}{2}$ for some odd
integer $n > 0$, since otherwise we would have $\mu < \lambda$. Notice that for $i >q$ we have
$$
	\langle \mu, \alpha_i^\vee \rangle =
	\left\{ \begin{array}{ll}
	a_{i-1} - 2a_i + a_{i+1} & \text{ if } i < r-1\\
	a_{r-2} - 2a_{r-1} + 2a_r & \text{ if } i = r-1\\
	a_{r-1} - 2a_r & \text{ if } i = r
	\end{array} \right.	
$$
Since $\mu$ is dominant, it follows that $a_q \geqslant a_{q+1} \geqslant \ldots \geqslant a_{r-1} \geqslant 2a_r$. Since $a_r \geqslant 1/2$, we get then $a_i \geqslant 1$ for every
$i$ such that $q \leqslant i < r$, and it follows that $\mu \leqslant \lambda^\mathrm{lb}_G$.

We now show that every $\mu \in \Pi^+(\lambda^\mathrm{lb}_G)$ satisfies the inequality $\mu \leqslant^\lambda \lambda^\mathrm{lb}_G$. Let $\mu \in \Pi^+(\lambda^\mathrm{lb}_G)$, set $\mu' = \mu -
\omega_{q-1} + \omega_q$ and denote $w(\mu')$ its $W$-conjugate in the dominant Weyl chamber. To conclude it is enough to show that $w(\mu') \in \Pi^+(\lambda)$:
indeed then $\mu' \in \Pi(\lambda)$ and by Proposition \ref{prop: lambda dominanza in natura} it follows $\mu \leqslant^\lambda \lambda^\mathrm{lb}_G$. The claim is clear if
$\mu'$ is dominant, so we may assume that $\langle \mu , \alpha_{q-1}^\vee \rangle = 0$. Define $p = \max\{i < q \, : \, \langle \mu , \alpha_i^\vee \rangle \neq
0\}$ or set $p= 0$ in case $\Supp(\mu) \cap \{\alpha_1, \ldots, \alpha_{q-1}\} = \varnothing$: then $w$ is the reflection associated to the root $\beta = \alpha_{p+1}+
\ldots + \alpha_{q-1}$ and $w(\mu') = \mu' + \beta$. On the other hand, since $\alpha_{q-1} \in \Supp(\lambda^\mathrm{lb}_G) \smallsetminus \Supp(\mu)$, by the definition of $p$
it follows that $\{\alpha_{p+1}, \ldots, \alpha_{q-1}\} \subset \Supp_S(\lambda^\mathrm{lb}_G - \mu)$.
\end{proof}

If $\Delta' \subset \Delta$ is a connected component, denote by $\Lambda_{\Delta'} \subset \Lambda$ the associated weight lattice. If $\lambda^\mathrm{lb}_{\Delta'} \in \LB{\lambda}$
is an adjoint little brother of $\lambda$ which arises correspondingly to a non-simply laced connected component $\Delta' \subset \Delta$ such that $\mathcal{X}(T) \cap
\Lambda_{\Delta'} = \mathbb{Z}[\Delta']$, then it follows from the definition that $\lambda^\mathrm{lb}_{\Delta'} \in \LBQ{\lambda}$. For instance this happens whenever $\Delta'$ is of
type $\mathsf{F} \mathsf{G}$. On the other hand, if $\Delta'$ is of type $\mathsf{C}_r$ with $r \geqslant 2$, then previous proposition shows that we may have $\lambda^\mathrm{lb}_{\Delta'}
\not \in \LBQ{\lambda}$. This never happens if $\Delta'$ is of type $\mathsf{B}_r$ with $r\geq 3$.

\begin{prop}
Suppose that $G = \mathrm{Spin}(2r+1)$ with $r \geqslant 3$, let $\lambda \in \Lambda^+$ be such that $\alpha_r \not \in \Supp(\lambda)$ and let $\lambda^\mathrm{lb}_\mathrm{ad} \in
\LB{\lambda}$ be the adjoint little brother of $\lambda$. Then $\lambda^\mathrm{lb}_\mathrm{ad} \in \LBQ{\lambda}$.
\end{prop}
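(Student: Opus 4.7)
The plan is to argue by contradiction: suppose some $\mu \in (\PQ^+(\lambda) \smallsetminus \Pi^+(\lambda)) \cup \LB{\lambda}$ satisfies $\lambda^{\mathrm{lb}}_{\mathrm{ad}} <^\lambda_\mathbb{Q} \mu$. Since $G$ has a single (non-simply-laced) connected component in its Dynkin diagram, $\LB{\lambda}$ reduces to $\{\lambda^{\mathrm{lb}}_{\mathrm{ad}}\}$, hence we may assume $\mu \in \PQ^+(\lambda) \smallsetminus \Pi^+(\lambda)$. Let $p = \max\{i \, : \, \alpha_i \in \Supp(\lambda)\}$, which is well defined and $< r$ under our hypotheses; by Definition \ref{def: twin} we have $\lambda - \lambda^{\mathrm{lb}}_{\mathrm{ad}} = \sum_{i=p}^r \alpha_i$.

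Write $\lambda - \mu = \sum_{i=1}^r a_i \alpha_i$. Since $2\Lambda \subset \mathbb{Z}[\Delta]$ in type $\mathsf{B}_r$, the coefficients $a_i$ lie in $\tfrac{1}{2}\mathbb{N}$. Moreover $\mu \notin \Pi^+(\lambda)$ forces $\lambda - \mu$ to lie in the non-trivial class of $\Lambda/\mathbb{Z}[\Delta]$, which is generated by $\omega_r = \tfrac{1}{2}\sum_{i=1}^r i\,\alpha_i$; matching parities yields $a_i \in \tfrac{1}{2} + \mathbb{Z}$ (hence $\geqslant \tfrac{1}{2}$) for $i$ odd and $a_i \in \mathbb{Z}$ for $i$ even. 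The assumption $\mu >^\lambda_\mathbb{Q} \lambda^{\mathrm{lb}}_{\mathrm{ad}}$ gives $\mu - \lambda^{\mathrm{lb}}_{\mathrm{ad}} \in \mathbb{Q}^+[\Phi^+(\lambda)]$, which in particular forces its simple-root coefficients to be non-negative: $a_i = 0$ for $i < p$ and $a_i \leqslant 1$ for $i \geqslant p$. Combined with $a_1 \geqslant \tfrac{1}{2}$, this excludes $p \geqslant 2$, so $p = 1$, $\Supp(\lambda) = \{\alpha_1\}$, and $\lambda = n\omega_1$ for some $n \geqslant 1$.

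To conclude, I exploit the dominance of $\mu$. For every $i \in \{2, \ldots, r\}$ we have $\langle \lambda, \alpha_i^\vee \rangle = 0$ and hence $\langle \lambda - \mu, \alpha_i^\vee\rangle \leqslant 0$; expanding via the Cartan matrix of $\mathsf{B}_r$ (noting $\langle \alpha_{r-1}, \alpha_r^\vee\rangle = -2$) this yields $2a_i \leqslant a_{i-1} + a_{i+1}$ for $2 \leqslant i \leqslant r-1$ together with $a_r \leqslant a_{r-1}$. The first inequalities say that the consecutive differences $a_{i+1} - a_i$ are non-decreasing, while the last one says that the final difference is non-positive; together they force every difference to be non-positive, so $(a_1, \ldots, a_r)$ is non-increasing. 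Therefore $a_i \leqslant a_1 = \tfrac{1}{2}$ for every $i$, and the parity constraint gives $a_2 = 0$ and $a_3 = \tfrac{1}{2}$. Since $r \geqslant 3$ both indices are defined, but then monotonicity yields $\tfrac{1}{2} = a_3 \leqslant a_2 = 0$, a contradiction.

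The main obstacle will be to correctly blend the parity constraint from the weight-lattice analysis with the convexity inequalities coming from dominance; both ingredients are essential, since the final contradiction pivots on the fact that the even-indexed $a_i$ are integers while the odd-indexed ones are half-integers. Note also why the hypothesis $r \geqslant 3$ cannot be dropped: for $r = 2$ the index $\alpha_3$ is not available and no contradiction would arise (indeed for $G = \mathrm{Sp}(4) = \mathrm{Spin}(5)$ this matches Proposition \ref{prop: LB Sp}, where $\lambda^{\mathrm{lb}}_{\mathrm{ad}}$ need not belong to $\LBQ{\lambda}$).
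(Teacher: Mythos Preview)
Your proof is correct and follows essentially the same strategy as the paper's: both arguments hinge on the parity constraint (the odd-indexed $a_i$ lie in $\tfrac{1}{2}+\mathbb{Z}$ while the even-indexed ones are integers), force $p=1$ from $a_1\neq 0$, and then use the convexity/monotonicity coming from the dominance of $\mu$ to reach the contradiction $a_3 \leqslant a_2 = 0$ versus $a_3 \in \tfrac{1}{2}+\mathbb{Z}$. The only difference is organizational: the paper begins with an arbitrary $\mu \in \PQ^+(\lambda)$ satisfying $\lambda^{\mathrm{lb}}_{\mathrm{ad}} <^\lambda_\mathbb{Q} \mu <_\mathbb{Q} \lambda$ and first argues that necessarily $\mu \notin \Pi^+(\lambda)$, whereas you start directly from $\mu \in \PQ^+(\lambda)\smallsetminus\Pi^+(\lambda)$, which is exactly what the definition of $\LBQ{\lambda}$ requires and makes the argument slightly cleaner.
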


\begin{proof}
Denote $p < r$ the maximum such that $\alpha_p \in \Supp(\lambda)$ and recall that $\lambda^\mathrm{lb}_\mathrm{ad} = \lambda -(\alpha_p + \ldots + \alpha_r)$. Suppose that $\mu
\in \PQ^+(\lambda)$ is such that $\lambda^\mathrm{lb}_\mathrm{ad} <^\lambda_\mathbb{Q} \mu <_\mathbb{Q} \lambda $ and denote $\lambda - \mu = \sum_{i=1}^r a_i \alpha_i$. Denote $t\geq p$ the
maximum such that $a_t \neq 0$: then we must have $t = r$, since otherwise $\langle \mu, \alpha_{t+1} \rangle < 0$. Hence we get $\Supp_S(\lambda - \mu) =
\Supp_S(\lambda - \lambda^\mathrm{lb}_\mathrm{ad})$ and in particular it follows that $\mu \in \PQ^+(\lambda) \smallsetminus \Pi^+(\lambda)$.

By the description of the fundamental weights of $G$, it follows that $a_i \in \mathbb{Z}$ for every even $i$, while $a_i \in \frac{1}{2}\mathbb{Z}$ for every odd $i$.
If $1 < i < r$, notice that
$$
	(a_{i+1} - a_i) - (a_i - a_{i-1}) = \langle \lambda - \mu, \alpha_i^\vee \rangle \in \mathbb{Z}.
$$
Since $\mu \not \in \Pi^+(\lambda)$, it follows then $a_i \not \in \mathbb{Z}$ for every odd $i$: therefore we must have $p = 1$ and $a_i = 1/2$ for every odd $i$.
On the other hand, since $\mu$ is dominant and since $p = 1$, it is easy to show that $a_1 \geqslant a_2 \geqslant \ldots \geqslant a_r$: hence we get $a_2 = 1/2$, which
is absurd.
\end{proof}

\section{Normality.}

Recall from Section 1 that $M$ is the canonical compactification of $G$ and, if $\lambda \in \mathcal{X}(T)^+$, recall the graded algebra
$$
	\widetilde A(\lambda) = \bigoplus_{n=0}^{\infty} \Gamma(M,\mathcal{M}_{n\lambda}).
$$
If $G$ is adjoint, Kannan proved in \cite{Ka} that $\widetilde A(\lambda)$ is generated in degree one, however this is false for general semisimple groups. A
counterexample is the following.

\begin{ex}	\label{ex: controesempio suriettivita}
Suppose $G = \mathrm{SL}(5)$ and set $\lambda = \omega_1 + \omega_4$, $\nu = \omega_1 + \omega_2$.
Then $\nu \leqslant_\mathbb{Q} 2 \lambda$, hence $\End(V(\nu))^* \subset \Gamma(M,\mathcal{M}_{2\lambda})$.
However they do not exist dominant weights $\mu_1 \leqslant_\mathbb{Q} \lambda$ and $\mu_2 \leqslant_\mathbb{Q} \lambda$
such that $V(\nu) \subset V(\mu_1) \otimes V(\mu_2)$. In particular, the multiplication map
\[
	\Gamma(M,\mathcal{M}_\lambda) \times \Gamma(M,\mathcal{M}_\lambda) \longrightarrow \Gamma(M,\mathcal{M}_{2\lambda})
\]
is not surjective and $\widetilde A(\lambda)$ is not generated in degree one.
\end{ex}

We now show that, if $\lambda \in \mathcal{X}(T)^+$ is sufficiently regular, then $\widetilde A(\lambda)$ is generated in degree one. Before that, we recall a lemma from \cite{Ka}.

\begin{lem} [{\cite[Lemma 3.2]{Ka}}]
Let $\lambda, \mu \in \Lambda^+$ and let $\nu \in \Pi^+(\lambda + \mu)$.
They exist $\lambda' \in \Pi^+(\lambda)$ and $\mu' \in \Pi^+(\mu)$ such that
$V(\nu) \subset V(\lambda') \otimes V(\mu')$.
\end{lem}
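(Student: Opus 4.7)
The plan is to realize $\nu$ as a sum of a weight of $V(\lambda)$ and a weight of $V(\mu)$, pass to their dominant Weyl conjugates, and then invoke the PRV theorem. The argument I have in mind proceeds in three short steps.

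First I would exploit the Cartan inclusion $V(\lambda+\mu) \hookrightarrow V(\lambda) \otimes V(\mu)$, realized as the submodule generated by the tensor $v_\lambda \otimes v_\mu$ of highest weight vectors. Since $\nu \in \Pi(\lambda+\mu)$, the $\nu$-weight space of $V(\lambda+\mu)$ is non-zero, hence so is the $\nu$-weight space of $V(\lambda) \otimes V(\mu)$. The decomposition
\[
    \bigl(V(\lambda) \otimes V(\mu)\bigr)_\nu = \bigoplus_{\lambda''+\mu''=\nu} V(\lambda)_{\lambda''} \otimes V(\mu)_{\mu''}
\]
then produces weights $\lambda'' \in \Pi(\lambda)$ and $\mu'' \in \Pi(\mu)$ with $\lambda'' + \mu'' = \nu$.

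Second, I would replace $\lambda''$ and $\mu''$ by their dominant Weyl conjugates $\lambda' := \overline{\lambda''} \in \Pi^+(\lambda)$ and $\mu' := \overline{\mu''} \in \Pi^+(\mu)$. Writing $\lambda'' = u\lambda'$ and $\mu'' = v\mu'$ for suitable $u,v \in W$, we have
\[
    \nu = u\lambda' + v\mu' = u\bigl(\lambda' + u^{-1}v\,\mu'\bigr),
\]
so $\nu$ lies in the $W$-orbit of $\lambda' + w\mu'$, where $w = u^{-1}v$. Since $\nu$ is dominant, it is the unique dominant representative of that orbit, and therefore $\nu = \overline{\lambda' + w\mu'}$.

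The conclusion then follows from the PRV theorem (conjectured by Parthasarathy, Ranga Rao and Varadarajan, and proven by Kumar and independently by Mathieu), which asserts that for any dominant weights $\lambda', \mu'$ and any $w \in W$ the irreducible module $V(\overline{\lambda' + w\mu'})$ occurs as a submodule of $V(\lambda') \otimes V(\mu')$. Applied to our situation this yields $V(\nu) \subset V(\lambda') \otimes V(\mu')$ with $\lambda' \in \Pi^+(\lambda)$ and $\mu' \in \Pi^+(\mu)$, which is exactly the claim. The only substantial ingredient is the invocation of the PRV theorem; the rest is a direct manipulation of weight spaces and $W$-orbits, so I do not expect any real obstacle.
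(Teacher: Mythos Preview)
Your argument is correct. The decomposition of $\nu$ as $\lambda''+\mu''$ via the Cartan component, the passage to dominant conjugates, and the invocation of the PRV theorem are all sound; the only point worth noting is that the Cartan embedding is not strictly necessary, since one may argue directly that $\Pi(\lambda+\mu)\subset\Pi(\lambda)+\Pi(\mu)$ from $\Pi^+(\lambda+\mu)\subset\Pi^+(\lambda)+\Pi^+(\mu)$ and $W$-invariance, but your version is equally valid.

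As for comparison with the paper: the paper does not supply its own proof of this lemma, but simply cites \cite[Lemma~3.2]{Ka}. Your proof is in fact essentially Kannan's original argument, and the PRV theorem you invoke is already part of the paper's toolkit (it is recalled and used explicitly in Section~3), so your approach fits seamlessly.
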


\begin{prop} \label{prop: generatori projcoord}
Suppose that $\lambda \in \mathcal{X}(T)^+$ is such that $\langle \lambda, \alpha^\vee \rangle \geqslant r_\alpha$ for every $\alpha \in \Supp(\lambda) \smallsetminus \Supp(\lambda)^\circ$.
Then the algebra $\widetilde A(\lambda)$ is generated in degree one.
\end{prop}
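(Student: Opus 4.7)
The plan is to reduce the statement to the cited lemma of Kannan. Using the decomposition $\widetilde A(\lambda) = \bigoplus_{n\geqslant 0}\bigoplus_{\nu \in \PQ^+(n\lambda)} \End(V(\nu))^*$ together with Lemma \ref{lem: coefficientimatriciali}, the claim that $\widetilde A(\lambda)$ is generated in degree one is equivalent to the following: for every $n \geqslant 2$ and every $\nu \in \PQ^+(n\lambda)$, there exist $\mu_1 \in \PQ^+(\lambda)$ and $\mu_2 \in \PQ^+((n-1)\lambda)$ such that $V(\nu) \subset V(\mu_1) \otimes V(\mu_2)$.

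To produce such $\mu_1, \mu_2$ from $\nu$, it is enough to find $\mu_1 \in \PQ^+(\lambda)$ satisfying the dominance inequality $\nu \leqslant \mu_1 + (n-1)\lambda$: indeed, the cited Kannan lemma then yields $\mu_1' \in \Pi^+(\mu_1)$ and $\mu_2 \in \Pi^+((n-1)\lambda)$ with $V(\nu) \subset V(\mu_1') \otimes V(\mu_2)$. Since $\mu_1 \leqslant_{\mathbb{Q}} \lambda$ we have $\Pi^+(\mu_1) \subset \PQ^+(\mu_1) \subset \PQ^+(\lambda)$, while trivially $\Pi^+((n-1)\lambda) \subset \PQ^+((n-1)\lambda)$, so replacing $\mu_1$ by $\mu_1'$ gives what we need.

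The construction of such $\mu_1$ is where the hypothesis enters. Since $\PQ^+(n\lambda)$ is finite, we can pick a $\leqslant^\lambda$-maximal element $\nu'$ of $\PQ^+(n\lambda)$ with $\nu \leqslant^\lambda \nu'$, so that $\nu' - n\lambda \in \mathcal{H}^{\mathrm{m}}_G(n\lambda)$. The hypothesis $\langle \lambda, \alpha^\vee \rangle \geqslant r_\alpha$ for $\alpha \in \Supp(\lambda) \smallsetminus \Supp(\lambda)^\circ$ also holds for $n\lambda$ (with the same $\overline{n\lambda} = \overline\lambda$), so Corollary \ref{cor: elementi massimali 2} gives $\mathcal{H}^{\mathrm{m}}_G(n\lambda) = \mathcal{H}^{\mathrm{m}}_G(\overline\lambda) = \mathcal{H}^{\mathrm{m}}_G(\lambda)$. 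Hence there exists $\mu_1 \in \PQm{\lambda} \subset \PQ^+(\lambda)$ with $\nu' - n\lambda = \mu_1 - \lambda$, that is, $\nu' = \mu_1 + (n-1)\lambda$. The relation $\nu \leqslant^\lambda \nu'$ now reads $\mu_1 + (n-1)\lambda - \nu \in \mathbb{N}[\Phi^+(\lambda)] \subset \mathbb{N}[\Delta]$, which is exactly the dominance inequality $\nu \leqslant \mu_1 + (n-1)\lambda$ needed for the previous step.

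The main obstacle is the choice of $\mu_1 \in \PQ^+(\lambda)$ for an arbitrary $\nu \in \PQ^+(n\lambda)$ that is not necessarily in $\Pi^+(n\lambda)$: without some control on the shape of $n\lambda - \nu$ one cannot hope to split off from it the contribution $\lambda - \mu_1$ of a weight $\mu_1$ lying in $\PQ^+(\lambda)$. Corollary \ref{cor: elementi massimali 2}, which is itself the payoff of Proposition \ref{prop: elementi massimali} and Corollary \ref{cor: elementi massimali}, is precisely the combinatorial statement that makes this splitting possible, because it says that the set of maximal differences $\mathcal{H}^{\mathrm{m}}_G(\lambda)$ stabilizes as soon as the non-zero coefficients of $\lambda$ are large enough.
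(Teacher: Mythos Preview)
Your argument is correct and follows essentially the same path as the paper. Both proofs hinge on the fact that a $\leqslant^\lambda$-maximal element of $\PQ^+(n\lambda)$ can be written as $\mu' + (n-1)\lambda$ with $\mu'\in\PQ^+(\lambda)$, and then Kannan's lemma handles the remaining weights; the only cosmetic difference is that the paper first reduces to $\leqslant$-maximal $\mu$ and invokes Proposition~\ref{prop: elementi massimali} and Corollary~\ref{cor: elementi massimali} directly to check that $\mu-(n-1)\lambda$ is dominant, whereas you treat a general $\nu$, climb to a $\leqslant^\lambda$-maximal $\nu'$, and quote the packaged identity $\mathcal{H}^{\mathrm{m}}_G(n\lambda)=\mathcal{H}^{\mathrm{m}}_G(\lambda)$ from Corollary~\ref{cor: elementi massimali 2}.
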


\begin{proof}
By previous corollary it is enough to show that for every $\mu \in \PQ^+(n\lambda)$ which is maximal w.r.t. $\leqslant$ the submodule $\End(V(\mu))^* \subset \widetilde
A_n(\lambda)$ is contained in the power $\widetilde A_1(\lambda)^n$. Since it is maximal w.r.t. $\leqslant$, the weight $\mu$ is maximal also w.r.t. $\leqslant^\lambda$, hence by
Proposition \ref{prop: elementi massimali} and Corollary \ref{cor: elementi massimali} we get that
$$
	\langle n\lambda - \mu, \alpha^\vee \rangle \leqslant \left\{
	\begin{array}{ll}
	1 & \text{ if } \alpha \in \Supp(\lambda)^\circ \\
	r_\alpha & \text{ if } \alpha \in \Supp(\lambda) \smallsetminus \Supp(\lambda)^\circ
	\end{array} \right.	
$$
and by the assumption on the coefficients of $\lambda$ it follows that $\langle n\lambda - \mu, \alpha^\vee \rangle \leqslant \langle \lambda, \alpha^\vee \rangle$.
Therefore, if we set $\mu' = \mu - (n-1)\lambda$, then we get $\mu ' \in \PQ^+(\lambda)$ and $V(\mu) \subset V(\mu') \otimes V(\lambda)^{\otimes n-1}$, hence
$\End(V(\mu))^* \subset \widetilde A_1(\lambda)^n$.
\end{proof}

Previous proposition allows to describe the normalization of a simple linear compactification of $G$ as follows. If $\lambda \in \mathcal{X}(T)^+$ is an almost
faithful weight, consider the adjoint compactification $X_\lambda$ and denote $\widetilde X_\lambda \longrightarrow X_\lambda$ the normalization of $X_\lambda$ in $\Bbbk(G)$. Before
describing $\widetilde X_\lambda$ we recall a lemma from \cite{CDM}, which is given there in the context of symmetric varieties.

\begin{lem} [{\cite[Prop. 2.1]{CDM}}] \label{lem: anelli integrali}
The algebra $\widetilde A(\lambda)$ is integral over $A(\lambda)$.
\end{lem}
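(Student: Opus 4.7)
The plan is to realize $\widetilde A(\lambda)$ as the full section ring of an ample line bundle on the normalization of $X_\lambda$, then deduce integrality from the finiteness of the normalization via standard Serre-vanishing arguments.

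First, one identifies $\widetilde A(\lambda) = \bigoplus_n \Gamma(\widetilde X_\lambda, \mathcal{M}_{n\lambda})$: since $M$ dominates $\widetilde X_\lambda$ via a proper birational $G\times G$-morphism $\rho$ between normal varieties, one has $\rho_* \mathcal{O}_M = \mathcal{O}_{\widetilde X_\lambda}$, and the projection formula yields the claimed identification once one recalls that $\mathcal{M}_\lambda$ on $M$ is pulled back along the factorization $M \longrightarrow \widetilde X_\lambda \longrightarrow X_\lambda$ from $\mathcal{O}_{X_\lambda}(1)$.

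Next let $\pi: \widetilde X_\lambda \longrightarrow X_\lambda$ be the normalization, which is finite since it is the integral closure of a projective variety in a finite extension of its function field, and set $S = \bigoplus_n \Gamma(X_\lambda, \mathcal{O}_{X_\lambda}(n))$. By the projection formula once more, $\widetilde A(\lambda) \simeq \bigoplus_n \Gamma(X_\lambda, \pi_* \mathcal{O}_{\widetilde X_\lambda} \otimes \mathcal{O}_{X_\lambda}(n))$. Since $\pi_* \mathcal{O}_{\widetilde X_\lambda}$ is coherent (as $\pi$ is finite) and $\mathcal{O}_{X_\lambda}(1)$ is ample, Serre vanishing ensures that this graded module is finitely generated over $S$; hence $\widetilde A(\lambda)$ is integral over $S$.

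It remains to show that $S$ is integral over the homogeneous coordinate ring $A(\lambda)$. By Serre vanishing applied to the homogeneous ideal $I$ of $X_\lambda$ in $\mathbb{P}(\End(V_\lambda))$, one has $A(\lambda)_n = S_n$ for all $n \gg 0$. Thus for any homogeneous $s \in S_d$, some power $s^k$ lies in $A(\lambda)_{dk}$, furnishing the monic integral equation $T^k - s^k = 0$ for $s$ over $A(\lambda)$; since integral elements form a subring, $S$ itself is integral over $A(\lambda)$. Combining the two integralities gives the lemma. The main obstacle is ensuring integrality at the graded level rather than merely after dehomogenizing to an affine open, and the ampleness of $\mathcal{O}_{X_\lambda}(1)$ is precisely what converts the local finiteness of the normalization into finite generation of the associated graded section rings.
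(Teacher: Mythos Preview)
The paper does not supply its own proof of this lemma; it simply records the statement with a citation to \cite{CDM}. Your proposal therefore provides what the paper omits, and the argument is correct: identifying $\widetilde A(\lambda)$ with $\bigoplus_n \Gamma(\widetilde X_\lambda, \pi^*\mathcal{O}_{X_\lambda}(n))$ via $\rho_*\mathcal{O}_M = \mathcal{O}_{\widetilde X_\lambda}$ and the projection formula, and then using finiteness of $\pi$ together with ampleness of $\mathcal{O}_{X_\lambda}(1)$ to force finite generation of the graded section module, is exactly the standard route. Two minor remarks: first, the finite generation of $\bigoplus_n \Gamma(X_\lambda, \mathcal{F}(n))$ for coherent $\mathcal{F}$ is not literally ``Serre vanishing'' but its well-known corollary that $\Gamma_*(\mathcal{F})$ is a finitely generated graded module over the homogeneous coordinate ring; second, your detour through $S$ is unnecessary, since that same corollary already gives finite generation of $\bigoplus_n \Gamma(X_\lambda, \pi_*\mathcal{O}_{\widetilde X_\lambda}(n))$ directly over $A(\lambda)$, making Step~4 superfluous. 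Neither point affects correctness.
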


\begin{teo} \label{teo: descrizione normalizzazione}
Let $\lambda \in \mathcal{X}(T)^+$ be an almost faithful weight with $\langle \lambda, \alpha^\vee \rangle \geqslant r_\alpha$ for every $\alpha \in \Supp(\lambda) \smallsetminus
\Supp(\lambda)^\circ$. Then $\widetilde X_\lambda \simeq X_{\PQ^+(\lambda)}$.
\end{teo}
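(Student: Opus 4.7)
The plan is to identify both $\widetilde X_\lambda$ and $X_{\PQ^+(\lambda)}$ with $\mathrm{Proj}$ of the same graded algebra $\widetilde A(\lambda)$. The hypothesis on the coefficients of $\lambda$ enters through Proposition \ref{prop: generatori projcoord}, which is the crucial input that makes the argument work.

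First I would verify that $X_{\PQ^+(\lambda)} = \mathrm{Proj}(\widetilde A(\lambda))$. Recall that, by the discussion in Section 1, the degree-one part of $\widetilde A(\lambda)$ is $\Gamma(M,\mathcal{M}_\lambda) = \bigoplus_{\mu \in \PQ^+(\lambda)} \End(V(\mu))^*$, and this coincides with the image of the pullback of homogeneous coordinates, i.e.\ with the degree-one part $A_1(\PQ^+(\lambda))$ of the homogeneous coordinate ring of $X_{\PQ^+(\lambda)} \subset \mathbb{P}(\End(V_{\PQ^+(\lambda)}))$. Now $A(\PQ^+(\lambda))$ is tautologically generated by its degree-one part, while by Proposition \ref{prop: generatori projcoord} the algebra $\widetilde A(\lambda)$ is also generated in degree one. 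Hence the two subalgebras of $\widetilde A(\lambda)$ agree, and $X_{\PQ^+(\lambda)} = \mathrm{Proj}(A(\PQ^+(\lambda))) = \mathrm{Proj}(\widetilde A(\lambda))$.

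Next I would show that $\widetilde X_\lambda = \mathrm{Proj}(\widetilde A(\lambda))$. The dominant map $M \to X_\lambda$ coming from the fact that $M$ dominates every simple linear compactification factors through the normalization as $\pi: M \to \widetilde X_\lambda$ followed by a finite morphism $\tau: \widetilde X_\lambda \to X_\lambda$; since both $M$ and $\widetilde X_\lambda$ are compactifications of $G$ with function field $\Bbbk(G)$, the map $\pi$ is proper and birational between normal varieties, so Zariski's main theorem gives $\pi_* \mathcal{O}_M = \mathcal{O}_{\widetilde X_\lambda}$. Setting $\mathcal{L} = \tau^* \mathcal{O}_{X_\lambda}(1)$, we have $\pi^* \mathcal{L} = \mathcal{M}_\lambda$, and $\mathcal{L}$ is ample on $\widetilde X_\lambda$ as pullback of an ample bundle along a finite morphism. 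The projection formula then yields $\Gamma(M,\mathcal{M}_{n\lambda}) \simeq \Gamma(\widetilde X_\lambda, \mathcal{L}^{\otimes n})$ for every $n$, so that $\widetilde A(\lambda)$ coincides with the section ring of $\mathcal{L}$ on $\widetilde X_\lambda$. Since $\mathcal{L}$ is ample, the standard identification of a projective variety with the $\mathrm{Proj}$ of the section ring of an ample line bundle gives $\widetilde X_\lambda = \mathrm{Proj}(\widetilde A(\lambda))$.

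Combining the two identifications produces the desired isomorphism $\widetilde X_\lambda \simeq X_{\PQ^+(\lambda)}$, which is automatically $G \times G$-equivariant since every step respects the group action. The delicate technical ingredients are the identity $\pi_* \mathcal{O}_M = \mathcal{O}_{\widetilde X_\lambda}$ and the ampleness of $\mathcal{L}$, but both are general facts about finite and proper birational morphisms. The real heart of the argument is Proposition \ref{prop: generatori projcoord}: the assumption $\langle \lambda, \alpha^\vee \rangle \geqslant r_\alpha$ for $\alpha \in \Supp(\lambda)\smallsetminus \Supp(\lambda)^\circ$ is exactly what makes $\widetilde A(\lambda)$ generated in degree one, and the counterexample in Example \ref{ex: controesempio suriettivita} shows that without such a hypothesis the identification $A(\PQ^+(\lambda)) = \widetilde A(\lambda)$ can fail.
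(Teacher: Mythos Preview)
Your argument is correct and reaches the same conclusion as the paper, but the route is genuinely different. Both proofs rely on Proposition~\ref{prop: generatori projcoord} to identify $A(\PQ^+(\lambda))$ with $\widetilde A(\lambda)$; after that the two diverge. The paper argues algebraically: it notes that $\widetilde A(\lambda)$ is integrally closed (because $M$ is normal), invokes Lemma~\ref{lem: anelli integrali} to see that $\widetilde A(\lambda)$ is integral over $A(\lambda)$, and separately verifies via Corollary~\ref{cor: elementi massimali 2} that $\PQ^+(\lambda)$ is faithful, so that the function field of $X_{\PQ^+(\lambda)}$ is $\Bbbk(G)$; putting these together shows that $X_{\PQ^+(\lambda)} \to X_\lambda$ is the normalization in $\Bbbk(G)$. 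You instead argue geometrically: you factor $M \to X_\lambda$ through $\widetilde X_\lambda$, use $\pi_*\mathcal O_M = \mathcal O_{\widetilde X_\lambda}$ and the projection formula to identify $\widetilde A(\lambda)$ with the section ring of the ample bundle $\tau^*\mathcal O_{X_\lambda}(1)$ on $\widetilde X_\lambda$, and then recover $\widetilde X_\lambda$ as $\mathrm{Proj}\,\widetilde A(\lambda)$. Your approach avoids both Lemma~\ref{lem: anelli integrali} and the explicit faithfulness check; the faithfulness of $\PQ^+(\lambda)$ becomes a corollary of the isomorphism rather than an input. The paper's approach, on the other hand, yields projective normality of $X_{\PQ^+(\lambda)}$ and the faithfulness of $\PQ^+(\lambda)$ as standalone facts, which are used elsewhere.
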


\begin{proof}
Since $M$ is a normal variety, $\widetilde{A}(\lambda)$ is an integrally closed algebra. On the other hand, Proposition \ref{prop: generatori projcoord} shows that
$\widetilde{A}(\lambda)$ is generated in degree one by
$$
	\Gamma(M,\mathcal{M}_\lambda) \simeq \bigoplus_{\mu \in \PQ^+(\lambda)}\End(V(\mu)),
$$
so that $X_{\PQ^+(\lambda)} \subset \mathbb{P}\big(\Gamma(M,\mathcal{M}_\lambda)^*\big)$ is a projectively normal variety.

Since $\lambda$ is almost faithful it follows that $\mathcal{X}(T)^+ = \bigcup_{n \in \mathbb{N}} \PQ^+(n\lambda)$, hence there exists $n \in \mathbb{N}$ such that $\PQ^+(n\lambda)$ is
faithful. On the other hand $\PQ^+(n\lambda)$ is faithful if and only if the set
$$
\mathcal{H}^{\mathrm{m}}_G(n\lambda) = \{\mu - n\lambda \, : \, \mu \in \PQ^+(n\lambda) \text{ is maximal w.r.t. } \leqslant^\lambda \}
$$
generates $\mathcal{X}(T)/\mathbb{Z}[\Delta]$: therefore $\PQ^+(\lambda)$ as well is faithful, since Corollary \ref{cor: elementi massimali 2} implies $\mathcal{H}^{\mathrm{m}}_G(\lambda) =
\mathcal{H}^{\mathrm{m}}_G(n\lambda)$ for every $n>0$.

Consider now the subalgebra $A(\lambda) \subset \widetilde A(\lambda)$ generated by
$\End(V(\lambda))^* \subset \widetilde A_1(\lambda)$, which is identified with the homogeneous coordinate ring of $X_\lambda$. Since $\widetilde{A}(\lambda)$ is integral over
$A(\lambda)$ by Lemma \ref{lem: anelli integrali}, it follows then that $X_{\PQ^+(\lambda)}$ is the normalization of $X_\lambda$ in $\Bbbk(G)$.
\end{proof}

As shown by following examples, the hypothesis on the coefficients of $\lambda$ in previous theorem is necessary.

\begin{ex}
\begin{itemize}
	\item[i)] Set $G = \mathrm{SL}(r+1)$ and $\Pi = \PQ^+(n\omega_1)$. Then $\Pi$ is faithful for every $n \geqslant 1$ and $X_\Pi$ is normal for every $n \geqslant
1$ (see Example \ref{ex: omega1 in Ar} and Theorem \ref{teo: caratterizzazione normalita}).
	\item[ii)] Set $G = \mathrm{Spin}(2r+1)$ and $\Pi = \PQ^+(n\omega_1)$. Then $\Pi$ is faithful for every $n \geqslant \lceil r/2 \rceil$ and $X_\Pi$ is
normal for every $n \geqslant \lceil r/2 \rceil$ (see Example \ref{ex: omega1 in Br} and Theorem \ref{teo: caratterizzazione normalita}).
	\item[iii)] Set $G = \mathrm{SL}(r+1)$ and $\Pi = \PQ^+(n\omega_2)$. Then $\Pi$ is faithful if and only if $n \geqslant 2$ if $r$ is odd and for every
$n\geq 1$ if $r$ is even, whereas $X_\Pi$ is normal if and only $n \geqslant r-1$ (see Example \ref{ex: omega2 in Ar} and Theorem \ref{teo: caratterizzazione
normalita}).
	\item[iv)] Suppose that $G$ is a direct product of adjoint groups and of groups of type $\mathsf{C}_r$ with $r\geq 1$. Then $X_{\PQ^+(\lambda)}$ is a normal
compactification of $G$ for every almost faithful $\lambda \in \mathcal{X}(T)^+$ (see Proposition \ref{prop: LB Sp} and Theorem \ref{teo: caratterizzazione
normalita}).
\end{itemize}
\end{ex}

Combining previous theorem with Proposition \ref{prop: criterio tensoriale morfismi}
we get the following tensorial characterization of the normality of $X_\Pi$.

\begin{prop} \label{prop: criterio tensoriale normalita}
Let $\Pi \subset \mathcal{X}(T)^+$ be simple and faithful with maximal element $\lambda$ and suppose
that $\langle \lambda, \alpha^\vee \rangle \geqslant r_\alpha$ for every $\alpha \in \Supp(\lambda) \smallsetminus \Supp(\lambda)^\circ$.
Then the variety $X_\Pi$ is normal if and only if for every $\nu \in \PQ^+(\lambda)$
they exist $n \in \mathbb{N}$ and $\mu_1,\ldots,\mu_n \in \Pi$ such that
\[ V(\nu + (n-1)\lambda) \subset V(\mu_1) \otimes \cdots \otimes V(\mu_n). \]
\end{prop}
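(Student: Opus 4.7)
The plan is to identify $X_\Pi$ with the normalization of $X_\lambda$ in $\Bbbk(G)$ supplied by Theorem~\ref{teo: descrizione normalizzazione}, and then translate that identification into the announced tensorial condition using Proposition~\ref{prop: criterio tensoriale morfismi}.

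First, since $\Pi$ is faithful it is in particular almost faithful, hence by Proposition~\ref{prop: timashev} and the subsequent remark $\Supp(\lambda)$ meets every connected component of $\Delta$, so $\lambda$ itself is almost faithful. Combined with the standing hypothesis on the coefficients of $\lambda$, Theorem~\ref{teo: descrizione normalizzazione} then identifies $X_{\PQ^+(\lambda)}$ with the normalization $\widetilde X_\lambda$ of $X_\lambda$ in $\Bbbk(G)$; in particular $X_{\PQ^+(\lambda)}$ is a normal and faithful compactification of $G$. Now $\Pi \subset \PQ^+(\lambda)$ because $\lambda$ is the maximum of $\Pi$ with respect to $\leqslant_\mathbb{Q}$, so Proposition~\ref{prop: criterio tensoriale morfismi}(i) applied trivially with $n=1$ and $\mu_1 = \mu'$ for each $\mu' \in \Pi$ provides a $G\times G$-equivariant morphism $f\colon X_{\PQ^+(\lambda)} \longrightarrow X_\Pi$ which factors the normalization map $X_{\PQ^+(\lambda)} \longrightarrow X_\lambda$. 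In particular $X_{\PQ^+(\lambda)}$ is also the normalization of $X_\Pi$ in $\Bbbk(G)$, and therefore $X_\Pi$ is normal if and only if $f$ is an isomorphism.

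Next I would observe that $f$ is an isomorphism if and only if there exists a $G\times G$-equivariant morphism in the opposite direction $g\colon X_\Pi \longrightarrow X_{\PQ^+(\lambda)}$: indeed such a $g$ must agree with $f^{-1}$ on the common open orbit $G$, and by separatedness $g\circ f$ and $f\circ g$ then coincide with the identity on the whole of $X_{\PQ^+(\lambda)}$ and $X_\Pi$ respectively. Applying Proposition~\ref{prop: criterio tensoriale morfismi}(i) a second time, with source $\Pi$ (maximal element $\lambda$) and target $\PQ^+(\lambda)$ (maximal element $\lambda$, with the same support), the existence of $g$ is equivalent to the condition that for every $\nu \in \PQ^+(\lambda)$ there exist $n \in \mathbb{N}$ and $\mu_1,\ldots,\mu_n \in \Pi$ with
\[
V(\nu - \lambda + n\lambda) \;=\; V(\nu + (n-1)\lambda) \;\subset\; V(\mu_1) \otimes \cdots \otimes V(\mu_n),
\]
which is exactly the stated criterion.

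The only non-routine input is Theorem~\ref{teo: descrizione normalizzazione}, which identifies $X_{\PQ^+(\lambda)}$ with the normalization of $X_\lambda$; once that identification is in hand, the proposition is a formal consequence of the tensorial morphism criterion. The hypothesis on the coefficients of $\lambda$ enters the argument only through this theorem, which explains why it cannot in general be dropped.
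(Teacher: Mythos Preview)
Your proof is correct and follows exactly the approach the paper intends: it explicitly says this proposition is obtained by ``combining previous theorem with Proposition~\ref{prop: criterio tensoriale morfismi}'', and your argument is precisely this combination, with the details spelled out (in particular the observation that $X_{\PQ^+(\lambda)}$, being the normalization of $X_\lambda$ in $\Bbbk(G)$, is automatically the normalization of $X_\Pi$ once $\Pi$ is faithful).
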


We may restate previous criterion in a more combinatorial way as follows.
If $\Pi\subset \mathcal{X}(T)^+$ is simple with maximal element $\lambda$, denote
$$
	\Omega(\Pi) = \Big\{ \nu - n\lambda \, : \, V(\nu) \subset \Big(\bigoplus_{\mu \in \Pi} V(\mu)\Big)^{\otimes n} \Big\}:
$$
it is a semigroup and by Lemma \ref{lem: coefficientimatriciali} it is the image of
$$
	\Quot{\Bbbk[X^\circ_\Pi]^{(B\times B^-)}}{\Bbbk^*} \subset \mathcal{X}(T) \times \mathcal{X}(T)
$$
in $\mathcal{X}(T)$ via the projection on the first factor.
If $\Pi = \{\mu_1, \ldots, \mu_m\}$, for simplicity sometimes we will denote
$\Omega(\Pi)$ also by $\Omega(\mu_1, \ldots, \mu_m)$.
Since $X_\Pi$ possesses an open $B\times B^-$-orbit,
every $B\times B^-$-semi-invariant function $\phi \in \Bbbk(X_\Pi)^{(B\times B^-)}$ is uniquely
determined by its weight up to a non-zero scalar factor, therefore we may restate
Proposition \ref{prop: criterio tensoriale morfismi}
and Proposition \ref{prop: criterio tensoriale normalita} as follows.

\begin{prop} \label{prop: criteri semigruppi}
Let $\Pi\subset \mathcal{X}(T)^+$ be simple and faithful with maximal element $\lambda$.
\begin{itemize}
	\item[i)]  Let $\Pi'\subset \mathcal{X}(T)^+$ be simple with maximal element $\lambda'$
and suppose that $\Supp(\lambda) = \Supp(\lambda')$. There exists an equivariant morphism
$X_\Pi \longrightarrow X_{\Pi'}$ if and only if $\Omega(\Pi') \subset \Omega(\Pi)$ if and only if
$\mu' - \lambda' \in \Omega(\Pi)$ for every $\mu' \in \Pi'$.
\item[ii)] Suppose that $\langle \lambda, \alpha^\vee \rangle \geqslant r_\alpha$
for every $\alpha \in \Supp(\lambda) \smallsetminus \Supp(\lambda)^\circ$.
Then the variety $X_\Pi$ is normal if and only if $\nu - \lambda \in \Omega(\Pi)$
for every $\nu \in \PQ^+(\lambda)$.
\end{itemize}
\end{prop}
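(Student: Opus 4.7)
The strategy is to reduce both statements to the tensor criteria already proved, namely Propositions \ref{prop: criterio tensoriale morfismi}(i) and \ref{prop: criterio tensoriale normalita}, by simply unwinding the definition of $\Omega(\Pi)$. Part (ii) is an immediate reformulation: by the very definition of $\Omega(\Pi)$, saying that $\nu - \lambda \in \Omega(\Pi)$ is saying that there exist $n \in \mathbb{N}$ and $\mu_1, \ldots, \mu_n \in \Pi$ such that $V(\nu + (n-1)\lambda) \subset V(\mu_1) \otimes \cdots \otimes V(\mu_n)$, which is exactly the normality criterion from Proposition \ref{prop: criterio tensoriale normalita}; no further argument is needed.

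For part (i), the equivalence between the existence of a $G \times G$-equivariant morphism $X_\Pi \to X_{\Pi'}$ and the condition ``$\mu' - \lambda' \in \Omega(\Pi)$ for every $\mu' \in \Pi'$'' is a similar direct unpacking of Proposition \ref{prop: criterio tensoriale morfismi}(i). Moreover, the implication $\Omega(\Pi') \subset \Omega(\Pi) \Rightarrow \mu' - \lambda' \in \Omega(\Pi)$ for every $\mu' \in \Pi'$ is obvious: taking $n = 1$ in the definition of $\Omega(\Pi')$ shows that $\mu' - \lambda' \in \Omega(\Pi')$, so the inclusion transfers it to $\Omega(\Pi)$. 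Only the converse implication requires an actual argument.

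To prove this converse, I will take $\xi = \nu - n\lambda' \in \Omega(\Pi')$ coming from an inclusion $V(\nu) \subset V(\mu'_{j_1}) \otimes \cdots \otimes V(\mu'_{j_n})$ with $\mu'_{j_i} \in \Pi'$. The hypothesis applied to each $\mu'_{j_i}$ provides integers $k_i$ and weights $\eta_{i, l} \in \Pi$ with $V(\mu'_{j_i} + k_i\lambda - \lambda') \subset V(\eta_{i,1}) \otimes \cdots \otimes V(\eta_{i, k_i})$. I plan to combine these two families via the semigroup structure of $\mathrm{Tens}_n(G)$ given by Lemma \ref{lem: tensor semigroup}: summing the tuple $(\nu, \mu'_{j_1}, \ldots, \mu'_{j_n}) \in \mathrm{Tens}_n(G)$ with a shift tuple $(\sum_i \nu'_i, \nu'_1, \ldots, \nu'_n) \in \mathrm{Tens}_n(G)$, where $\nu'_i = (k_i + d)\lambda - \lambda'$ for a suitable $d$, should yield $V(\xi + M\lambda) \subset \bigotimes_i V(\mu'_{j_i} + \nu'_i) \subset V_\Pi^{\otimes M}$ with $M = \sum_i (k_i + d)$, proving $\xi \in \Omega(\Pi)$.

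The main obstacle is the dominance of the auxiliary weights $\nu'_i$: for the shift tuple to belong to $\mathrm{Tens}_n(G)$ via the Cartan rule $V(\sum_i \nu'_i) \subset \bigotimes_i V(\nu'_i)$, each $\nu'_i$ must be dominant, which for the naive choice $\nu'_i = k_i\lambda - \lambda'$ need not hold. The assumption $\Supp(\lambda) = \Supp(\lambda')$ resolves this, since for $d$ large enough the weight $d\lambda - \lambda'$ becomes dominant, and hence so does $(k_i + d)\lambda - \lambda'$. To match this enlargement on the other side of the inclusions, I will tensor each containment $V(\mu'_{j_i} + k_i\lambda - \lambda') \subset V_\Pi^{\otimes k_i}$ with $V(\lambda)^{\otimes d}$ (legitimate because $\lambda \in \Pi$), using the Cartan rule to absorb $V(\lambda)^{\otimes d}$ into the highest weight and so replace $k_i$ by $k_i + d$. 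Once the shifts are dominant, the sum in $\mathrm{Tens}_n(G)$ proceeds as sketched and yields $\xi \in \Omega(\Pi)$.
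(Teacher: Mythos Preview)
Your argument is correct. Part (ii) and the first and third equivalences in part (i) are, as you say, literal translations of Propositions~\ref{prop: criterio tensoriale morfismi}(i) and~\ref{prop: criterio tensoriale normalita} into the language of $\Omega(\Pi)$. For the remaining implication in (i), namely that $\mu'-\lambda'\in\Omega(\Pi)$ for every $\mu'\in\Pi'$ forces $\Omega(\Pi')\subset\Omega(\Pi)$, your tensor computation via Lemma~\ref{lem: tensor semigroup} goes through: the dominance of the auxiliary shifts $(k_i+d)\lambda-\lambda'$ is exactly where the hypothesis $\Supp(\lambda)=\Supp(\lambda')$ enters, and once that is secured the Cartan component furnishes the required element of $\mathrm{Tens}_n(G)$ to add.

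The paper, however, does not argue this implication directly. Immediately before stating the proposition, it records that $\Omega(\Pi)$ is the image in $\mathcal{X}(T)$ of $\Bbbk[X^\circ_\Pi]^{(B\times B^-)}/\Bbbk^*$, and that $B\times B^-$-eigenfunctions on $X_\Pi$ are determined by their weights because of the open $B\times B^-$-orbit. With this geometric description in hand, the chain of equivalences closes differently: an equivariant morphism $X_\Pi\to X_{\Pi'}$ yields an inclusion $\Bbbk[X^\circ_{\Pi'}]\subset\Bbbk[X^\circ_\Pi]$ of coordinate rings, hence an inclusion $\Omega(\Pi')\subset\Omega(\Pi)$ of weight semigroups, and the remaining implications are the trivial one together with Proposition~\ref{prop: criterio tensoriale morfismi}(i). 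So the paper's route is geometric and essentially one line once the identification of $\Omega(\Pi)$ with a weight semigroup is made, whereas yours stays entirely inside representation theory and avoids invoking that identification at the cost of an explicit tensor manipulation. Both are valid; the paper's is shorter, yours is more self-contained.
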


\begin{oss}
Let $\lambda\in \mathcal{X}(T)^+$ be an almost faithful weight such that $\langle \lambda, \alpha^\vee \rangle \geqslant r_\alpha$ for every $\alpha \in \Supp(\lambda) \smallsetminus
\Supp(\lambda)^\circ$ and denote $\widetilde\Omega(\lambda)$ the image of the semigroup
$$
	\{B \times B^- \text{-weights in } \Bbbk[\widetilde X^\circ_\lambda] \} \subset \mathcal{X}(T) \times \mathcal{X}(T)
$$
in $\mathcal{X}(T)$ via the projection on the first factor: then $\widetilde\Omega(\lambda) = \{\pi \in \mathcal{X}(T) \, : \, n\pi \in \Omega(\lambda) \; \exists\, n \in \mathbb{N}\}$ is the
\textit{saturation} of $\Omega(\lambda)$ in $\mathcal{X}(T)$. By the description of $\widetilde X_\lambda$ given in Theorem \ref{teo: descrizione normalizzazione}, we may
describe $\widetilde\Omega(\lambda)$ more explicitly as follows:
$$
	\widetilde\Omega(\lambda)  = \left\{\mu - n\lambda \, : \, \mu \in \PQ^+(n\lambda)\right\}.
$$
Consider now the cone $\mathcal{C}(\lambda) \subset \mathcal{X}(T)_\mathbb{Q}$ generated by $\Phi^-(\lambda)$.
Then Proposition \ref{prop: lambda coni} shows that
$$
	\mathcal{C}(\lambda) \cap \mathcal{X}(T) = \left\{\mu - n\lambda \, : \, \mu \in \PQ(n\lambda)\right\}
$$
and by Theorem \ref{teo: descrizione normalizzazione} it follows that $\mathcal{C}(\lambda) \cap \mathcal{X}(T)$
is the image of
$$
	\{T \times T \text{-weights in } \Bbbk[\widetilde X^\circ_\lambda] \} \subset \mathcal{X}(T) \times \mathcal{X}(T)
$$
in $\mathcal{X}(T)$ via the projection on the first factor.
\end{oss}

Thanks to the description of $\widetilde X_\lambda$ given
in Theorem \ref{teo: descrizione normalizzazione},
we are now able to characterize the normality of a simple
linear compactification of $G$ as follows.

\begin{teo} \label{teo: caratterizzazione normalita}
Let $\Pi\subset \mathcal{X}(T)^+$ be a simple subset with maximal element $\lambda$ and assume that $\langle \lambda, \alpha^\vee \rangle \geqslant r_\alpha$ for every $\alpha
\in \Supp(\lambda) \smallsetminus \Supp(\lambda)^\circ$. Then the variety $X_\Pi$ is a normal compactification of $G$ if and only if $\Pi \supset \LBQ{\lambda}$.
\end{teo}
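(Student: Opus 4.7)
By Proposition~\ref{prop: criterio tensoriale normalita}, normality of $X_\Pi$ is equivalent to the tensor criterion: for every $\nu \in \PQ^+(\lambda)$ there exist $n \in \mathbb{N}$ and $\mu_1, \ldots, \mu_n \in \Pi$ with $V(\nu + (n-1)\lambda) \subset V(\mu_1) \otimes \cdots \otimes V(\mu_n)$. The plan is to establish the two implications by working with this criterion throughout.

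For necessity, assume $X_\Pi$ is normal and pick $\mu \in \LBQ{\lambda}$; applying the tensor criterion to $\mu$ and invoking Proposition~\ref{prop: necessita} yields factors $\mu_1, \ldots, \mu_n \in \Pi$ with $\mu \leqslant^\lambda_\mathbb{Q} \mu_i$ for all $i$. I split on whether $\mu$ belongs to $\PQ^+(\lambda) \smallsetminus \Pi^+(\lambda)$ or to $\LB{\lambda}$. In the first case, not every $\mu_i$ can lie in $\Pi^+(\lambda)$: if all did, Corollary~\ref{cor: lambda dominanza in natura} would force $\mu \leqslant^\lambda \mu_1 \leqslant \lambda$, hence $\mu \leqslant \lambda$, contradicting $\mu \notin \Pi^+(\lambda)$. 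Thus some $\mu_i$ lies in $(\PQ^+(\lambda) \smallsetminus \Pi^+(\lambda)) \cup \LB{\lambda}$, and maximality of $\mu$ in $\LBQ{\lambda}$ within that set forces $\mu = \mu_i \in \Pi$. In the second case, the same maximality argument handles the subcase where some $\mu_i \in (\PQ^+(\lambda) \smallsetminus \Pi^+(\lambda)) \cup \LB{\lambda}$; the remaining subcase, in which all $\mu_i \in \Pi^+(\lambda) \smallsetminus \LB{\lambda}$, is excluded by the adjoint necessity in Theorem~\ref{teo: normalita caso aggiunto} applied to the given decomposition, since an adjoint little brother cannot be tensor-decomposed by factors drawn from $\Pi^+(\lambda) \smallsetminus \LB{\lambda}$.

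For sufficiency, assume $\Pi \supset \LBQ{\lambda}$ and prove the tensor criterion by reverse induction along $\leqslant^\lambda_\mathbb{Q}$ inside $\PQ^+(\lambda)$. The base case $\nu \in \LBQb{\lambda} \subset \Pi$ is immediate with $n=1$. For the inductive step with $\nu \notin \LBQb{\lambda}$, the normality of the normalization $X_{\PQ^+(\lambda)}$ (Theorem~\ref{teo: descrizione normalizzazione}) provides a decomposition $V(\nu + (n-1)\lambda) \subset V(\nu_1) \otimes \cdots \otimes V(\nu_n)$ with $\nu_i \in \PQ^+(\lambda)$, and Proposition~\ref{prop: necessita} ensures $\nu \leqslant^\lambda_\mathbb{Q} \nu_i$ for every $i$, strictly so for at least one $\nu_i$ when $\nu \notin \LBQb{\lambda}$. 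The inductive hypothesis, applied to those $\nu_i$ above $\nu$, yields decompositions of each $V(\nu_i + (n_i - 1)\lambda)$ by factors in $\Pi$, which I splice into a single decomposition of $V(\nu + (N-1)\lambda)$ by tensoring with suitable powers of $V(\lambda)$ and using the semigroup property of $\mathrm{Tens}_k(G)$ (Lemma~\ref{lem: tensor semigroup}); the degree-one generation of $\widetilde A(\lambda)$ (Proposition~\ref{prop: generatori projcoord}), which requires precisely the coefficient hypothesis $\langle \lambda, \alpha^\vee \rangle \geqslant r_\alpha$, is what guarantees the substitution closes off at finite degree.

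The main obstacle is the splicing step in the sufficiency argument. Since $V(\nu_i + (n_i-1)\lambda)$ sits inside $V(\nu_i) \otimes V(\lambda)^{\otimes (n_i-1)}$ only as the highest-weight summand, pushing through $V(\nu_i + (n_i-1)\lambda) \subset V(\mu_{i,1}) \otimes \cdots \otimes V(\mu_{i,n_i})$ across all factors at once requires delicate bookkeeping of degrees and highest weights. The assumption on the coefficients of $\lambda$ is what makes room for the extra copies of $V(\lambda)$ and ensures compatibility with Corollary~\ref{cor: elementi massimali 2}, aligning the maximal-elements analysis of Proposition~\ref{prop: elementi massimali} with the tensor criterion; the examples following Theorem~\ref{teo: descrizione normalizzazione} show that without this hypothesis the statement genuinely fails, so no argument dispensing with it can succeed.
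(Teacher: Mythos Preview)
Your necessity argument follows the paper's approach and is essentially correct, though the treatment of the case $\mu \in \LB{\lambda}$ is imprecise: the paper argues directly from Proposition~\ref{prop: necessita} that every $\mu_i$ must equal either $\lambda$ or $\nu$ (since $\nu$ is maximal in $\PQ^+(\lambda)\smallsetminus\{\lambda\}$ w.r.t.\ $\leqslant^\lambda_\mathbb{Q}$), and then Theorem~\ref{teo: normalita caso aggiunto} rules out all $\mu_i=\lambda$.

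Your sufficiency argument, however, has a genuine gap. Invoking the normality of $X_{\PQ^+(\lambda)}$ to produce a decomposition $V(\nu+(n-1)\lambda)\subset V(\nu_1)\otimes\cdots\otimes V(\nu_n)$ with $\nu_i\in\PQ^+(\lambda)$ yields nothing: since $\nu$ itself lies in $\PQ^+(\lambda)$, the trivial choice $n=1$, $\nu_1=\nu$ already satisfies the criterion, and Proposition~\ref{prop: necessita} then gives only $\nu\leqslant^\lambda_\mathbb{Q}\nu$. Your claim ``strictly so for at least one $\nu_i$ when $\nu\notin\LBQb{\lambda}$'' is unsupported, and without it the reverse induction on $\leqslant^\lambda_\mathbb{Q}$ never progresses. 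The splicing mechanics you discuss in the final paragraph are not the obstacle; the obstacle is producing a \emph{nontrivial} decomposition in the first place.

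The paper supplies exactly this missing step via the PRV conjecture. Given $\nu$ not maximal, one picks $\mu_1\in\LBQ{\lambda}$ with $\nu<^\lambda_\mathbb{Q}\mu_1$, uses Proposition~\ref{prop: lambda coni} to find $m$ such that $m\lambda+\nu-\mu_1\in\mathcal{P}(m\lambda)$, and then sets $\lambda'=m\lambda$, $\nu'=\nu+(m-1)\lambda$, $\mu_1'=\mu_1+(m-1)\lambda$. Writing $\nu_2$ for the dominant $W$-conjugate of $\lambda'+\nu'-\mu_1'$, the PRV conjecture gives $V(\nu'+\lambda')\subset V(\mu_1')\otimes V(\nu_2)$, and one checks $\lambda'-\nu_2\leqslant\mu_1-\nu<_\mathbb{Q}\lambda-\nu$, so the induction (on the rational dominance order) genuinely advances. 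Corollary~\ref{cor: elementi massimali 2} is then used to transport little brothers between $\lambda'$ and $\lambda$, and Proposition~\ref{prop: criteri semigruppi} converts the conclusion back into the semigroup $\Omega(\LBQb{\lambda})$. None of these ingredients appears in your proposal.
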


Notice that the hypothesis on the coefficients of $\lambda$ in the previous theorem is automatically fulfilled whenever $\lambda$ is a regular weight. Notice
also that it involves no loss of generality: if $\alpha \in \Supp(\lambda)$ is such that $\langle \lambda, \alpha^\vee \rangle < r_\alpha$ then we may consider the
simple set
$$
	\Pi' = \{\mu + (r_\alpha - \langle \lambda, \alpha^\vee \rangle ) \omega_\alpha \, : \, \mu \in \Pi\}
$$
and by Proposition \ref{prop: criterio tensoriale morfismi} the varieties $X_\Pi$ and $X_{\Pi'}$ are equivariantly isomorphic. However, it may also happen
that the hypothesis on the coefficients of $\lambda$ is not needed: for instance this is the case if $G = \mathrm{SL}(r+1)$ and $\Supp(\lambda)=\{\alpha_1\}$ (see
Example \ref{ex: omega1 in Ar}) and when $G$ is a direct product of adjoint groups and groups of type $\mathsf{C}_r$ with $r \geqslant 1$ (see Proposition \ref{prop:
LB Sp}).

For convenience we split the proof of previous theorem in the following two propositions, where we treat separately the necessity and the sufficiency of
the condition.

\begin{prop}
Let $\Pi\subset \mathcal{X}(T)^+$ be simple and faithful with maximal element $\lambda$ and assume that $\langle \lambda, \alpha^\vee \rangle \geqslant r_\alpha$ for every
$\alpha \in \Supp(\lambda) \smallsetminus \Supp(\lambda)^\circ$. If $X_\Pi$ is normal, then $\Pi \supset \LBQ{\lambda}$.
\end{prop}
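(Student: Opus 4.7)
The plan is a proof by contradiction: assume $X_\Pi$ is normal and suppose some $\mu \in \LBQ{\lambda}$ is missing from $\Pi$. Applying the tensorial criterion for normality (Proposition \ref{prop: criterio tensoriale normalita}) at $\nu = \mu$ gives an integer $n$ and weights $\mu_1, \ldots, \mu_n \in \Pi$ such that
$$V(\mu + (n-1)\lambda) \subset V(\mu_1) \otimes \cdots \otimes V(\mu_n).$$
Proposition \ref{prop: necessita} then provides $\mu \leqslant^\lambda_\mathbb{Q} \mu_i$ for every $i$, and each $\mu_i$ belongs to $\PQ^+(\lambda)$ because $\lambda$ is the maximum of $\Pi$.

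The first step is to exploit the defining maximality of $\mu$ in the set $(\PQ^+(\lambda) \smallsetminus \Pi^+(\lambda)) \cup \LB{\lambda}$ with respect to $\leqslant^\lambda_\mathbb{Q}$: were some $\mu_i$ to lie in this set, the inequality $\mu \leqslant^\lambda_\mathbb{Q} \mu_i$ would force $\mu_i = \mu$, contradicting $\mu \not\in \Pi$. Hence every $\mu_i$ lies in $\Pi^+(\lambda) \smallsetminus \LB{\lambda}$, and in particular satisfies $\mu_i \leqslant \lambda$ in the dominance order.

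This already settles Case A. If $\mu \in \PQ^+(\lambda) \smallsetminus \Pi^+(\lambda)$, the tensor-product inequality $\mu + (n-1)\lambda \leqslant \sum_i \mu_i$ combined with $\sum_i \mu_i \leqslant n\lambda$ gives $\mu \leqslant \lambda$, contradicting $\mu \not\in \Pi^+(\lambda)$.

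Case B is the harder case $\mu \in \LB{\lambda}$, where the tensor inclusion is now purely adjoint and the plan is to reduce to \cite{BGMR}. Corollary \ref{cor: lambda dominanza in natura} upgrades $\mu \leqslant^\lambda_\mathbb{Q} \mu_i$ to $\mu \leqslant^\lambda \mu_i$; writing $\mu_i = \lambda - \xi_i$, the identities $\sum_i \xi_i \leqslant \lambda - \mu = \alpha_p + \cdots + \alpha_q$ pin each $\xi_i$ to a sum over a subset $T_i \subset \{\alpha_p, \ldots, \alpha_q\}$ with the $T_i$ pairwise disjoint. A case-by-case inspection of the non-simply-laced Dynkin components (comparing the dominance conditions on $\mu_i = \lambda - \sum_{j \in T_i} \alpha_j$ with the exclusion $\mu_i \not\in \LB{\lambda}$, noting further that any $\mu_i$ with $\sum_{j \in S_i}\alpha_j \not\in \mathbb{Q}^+[\Phi^+(\lambda)]$, $S_i = \{\alpha_p,\ldots,\alpha_q\}\smallsetminus T_i$, is ruled out by Proposition \ref{prop: necessita}) leaves only $T_i = \varnothing$, i.e., $\mu_i = \lambda$ for every $i$. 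The resulting inclusion $V(\mu + (n-1)\lambda) \subset V(\lambda)^{\otimes n}$ then contradicts the combinatorial core of Theorem \ref{teo: normalita caso aggiunto} (applied to the adjoint set $\{\lambda\}$, which is not normal as it does not contain the little brother $\mu$): indeed the proof there shows that the tensor-expansion condition of Proposition \ref{prop: criterio tensoriale normalità aggiunto} fails precisely at such $\mu \in \LB{\lambda}$. The main obstacle is this final combinatorial step in Case B; everything else is a clean application of the maximality machinery developed in Section 2.
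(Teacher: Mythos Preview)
Your overall strategy and Case~A coincide with the paper's proof. The divergence is in Case~B, where your route is considerably more laborious than necessary.

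The paper's argument for $\mu\in\LB{\lambda}\cap\LBQ{\lambda}$ rests on a single clean observation you missed: such a $\mu$ is in fact maximal in \emph{all} of $\PQ^+(\lambda)\smallsetminus\{\lambda\}$ with respect to $\leqslant^\lambda_\mathbb{Q}$, not just in $(\PQ^+(\lambda)\smallsetminus\Pi^+(\lambda))\cup\LB{\lambda}$. Indeed, if $\mu<^\lambda_\mathbb{Q}\mu'$ with $\mu'\in\PQ^+(\lambda)\smallsetminus\{\lambda\}$, then either $\mu'\in(\PQ^+(\lambda)\smallsetminus\Pi^+(\lambda))\cup\LB{\lambda}$ (contradicting $\mu\in\LBQ{\lambda}$), or $\mu'\in\Pi^+(\lambda)$; in the latter case $\mu'-\mu\in\mathbb{Q}^+[\Delta]\cap\mathbb{Z}[\Delta]=\mathbb{N}[\Delta]$, so $\mu<\mu'<\lambda$, contradicting the fact that an adjoint little brother is covered by $\lambda$ in the dominance order. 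Once this maximality is in hand, Proposition~\ref{prop: necessita} forces every $\mu_i\in\{\lambda,\mu\}$ immediately, and Theorem~\ref{teo: normalita caso aggiunto} rules out $\mu_i=\lambda$ for all $i$. No Dynkin case analysis is needed.

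Your approach in Case~B is salvageable: the upgrade $\mu\leqslant^\lambda\mu_i$ via Corollary~\ref{cor: lambda dominanza in natura} is correct (apply it with the distinguished factor $V(\mu_i)$ playing the role of $V(\mu)$ there), and the constraint $\sum_{j\in S_i}\alpha_j\in\mathbb{N}[\Phi^+(\lambda)]$ does force $S_i$ to be a connected interval starting at $\alpha_p$, after which dominance of $\mu_i$ kills all nonempty $T_i$. But this case-by-case check, which you flag as ``the main obstacle,'' is entirely avoidable by the covering argument above. Your parenthetical invoking Proposition~\ref{prop: necessita} for the $\mathbb{Q}^+$-condition is also redundant once you have the integral $\leqslant^\lambda$ inequality.
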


\begin{proof}
By Proposition \ref{prop: criterio tensoriale normalita} for every $\nu \in \PQ^+(\lambda)$ they exist $n \in \mathbb{N}$ and $\mu_1,\ldots,\mu_n \in \Pi$ such that
$$
	V(\nu + (n-1)\lambda) \subset V(\mu_1) \otimes \cdots \otimes V(\mu_n).
$$
Suppose that $\nu \in \PQ^+(\lambda) \smallsetminus \Pi^+(\lambda)$ is maximal w.r.t. $\leqslant^\lambda_\mathbb{Q}$. Since $\sum_{i=1}^r \mu_i - \nu - (n-1)\lambda \in \mathbb{N}[\Delta]$, we
must have $\mu_i \in \PQ^+(\lambda) \smallsetminus \Pi^+(\lambda)$ for some $i$: hence by Proposition \ref{prop: necessita} we get $\nu \leqslant^\lambda_\mathbb{Q} \mu_i$ and the
equality $\mu_i = \nu$ follows by maximality. Suppose now that $\nu \in \LB{\lambda}$ is maximal in $\PQ^+(\lambda) \smallsetminus \{\lambda\}$ w.r.t.$\leqslant_\mathbb{Q}^\lambda$: then
by Proposition \ref{prop: necessita} for every $i$ we must have either $\mu_i = \lambda$ or $\mu_i = \nu$, while by Theorem \ref{teo: normalita caso aggiunto}
we cannot have $\mu_i = \lambda$ for every $i$.
\end{proof}

Before to prove the sufficiency of the condition, we recall the former Parthasarathy-Ranga Rao-Varadarajan conjecture, which was proved independently by
Kumar \cite{Ku1} and Mathieu \cite{Ma}.

\begin{teo}[PRV Conjecture]
Let $\lambda, \mu \in \mathcal{X}(T)^+$ be dominant weights and let $\nu \leqslant \lambda + \mu$
be a dominant weight of the shape $\nu = w\lambda + w'\mu$, with $w,w'\in W$.
Then $V(\nu) \subset V(\lambda) \otimes V(\mu)$.
\end{teo}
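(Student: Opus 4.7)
The PRV conjecture is a genuinely deep theorem; there is no elementary proof, and any approach engages substantial machinery. My plan is to follow Kumar's geometric proof (via Demazure modules, Schubert varieties, and Frobenius splitting), with the understanding that Mathieu's proof via quantum groups at roots of unity, or Littelmann's proof via the path model, would serve equally well as alternative routes.

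First, I would recast the statement representation-theoretically. The containment $V(\nu) \subset V(\lambda) \otimes V(\mu)$ is equivalent to the existence of a nonzero highest-weight vector of weight $\nu$ in $V(\lambda) \otimes V(\mu)$. The weight space $(V(\lambda) \otimes V(\mu))_\nu$ is manifestly nonzero, since it contains the tensor $v_{w\lambda} \otimes v_{w'\mu}$ of extremal vectors. The whole task is therefore to show that when one projects onto the $V(\nu)$-isotypic component (the top of the filtration by highest weights $\geqslant \nu$), this tensor does not project to zero; equivalently, that the cyclic $U(\mathfrak{g})$-submodule it generates admits a genuine highest-weight quotient of weight $\nu$, as opposed to being contained in the span of $V(\eta)$-components with $\eta > \nu$.

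Next, I would translate to geometry on $G/B \times G/B$. By Borel-Weil, $V(\lambda)^{*} \otimes V(\mu)^{*} \simeq H^0(G/B \times G/B,\mathcal{L}_\lambda \boxtimes \mathcal{L}_\mu)$. For $\sigma \in W$, let $V_\sigma(\lambda) = U(\mathfrak{b}) \cdot v_{\sigma\lambda}$ be the Demazure submodule; dually, $V_\sigma(\lambda)^{*} = H^0(X_\sigma,\mathcal{L}_\lambda)$ where $X_\sigma = \overline{B\sigma B/B}$. The Mehta-Ramanathan theorem on Frobenius splitting of Schubert varieties yields the surjectivity of the restriction $H^0(G/B,\mathcal{L}_\lambda) \to H^0(X_w,\mathcal{L}_\lambda)$ together with vanishing of higher cohomology $H^{>0}(X_w,\mathcal{L}_\lambda) = 0$. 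The plan is to use these inputs on the product $X_w \times X_{w'}$ together with a careful choice of auxiliary Bott-Samelson desingularization so as to exhibit a section of $\mathcal{L}_\lambda \boxtimes \mathcal{L}_\mu$ whose restriction recovers the vector dual to $v_{w\lambda} \otimes v_{w'\mu}$.

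The hard step is to leverage the dominance of $\nu$ to conclude that the above section descends to a highest-weight section of $\mathcal{L}_\nu$ under the diagonal multiplication $H^0(G/B,\mathcal{L}_\lambda) \otimes H^0(G/B,\mathcal{L}_\mu) \to H^0(G/B,\mathcal{L}_{\lambda+\mu})$ composed with the $(G\times G)$-equivariant projection onto the $V(\nu) \boxtimes V(\nu)^{*}$ component. This is precisely the place where $\nu \leqslant \lambda + \mu$ together with $\nu$ being the dominant representative of $w\lambda + w'\mu$ enters in an essential way: the dominance guarantees that a suitable $\mathfrak{U}(\mathfrak{n}^{+})$-raising of $v_{w\lambda} \otimes v_{w'\mu}$ does not annihilate the image in $V(\nu)$, while the inequality $\nu \leqslant \lambda + \mu$ ensures we stay within the weights actually occurring in the tensor product. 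Combinatorially, the same obstacle appears in Littelmann's approach as the problem of exhibiting an LS path of shape $\mu$ from $0$ to $\nu - \lambda$ that remains in the $\lambda$-shifted dominant chamber, which one constructs by a piecewise-linear interpolation through the $W$-orbit of $\mu$ tailored to the pair $(w,w')$. Either route, the technical core is showing that this positivity is enough to force non-vanishing, and this is the main obstacle.
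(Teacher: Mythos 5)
The paper does not prove this statement at all: it is quoted as a known theorem (the former Parthasarathy--Ranga Rao--Varadarajan conjecture) with references to Kumar \cite{Ku1} and Mathieu \cite{Ma}, and is then used as a black box in the sufficiency half of the normality theorem. So there is no internal argument to compare yours against; the only question is whether your text constitutes a proof, and it does not. You correctly reduce the statement to showing that the extremal tensor $v_{w\lambda}\otimes v_{w'\mu}$, which spans a line of weight $w\lambda+w'\mu$ in $V(\lambda)\otimes V(\mu)$, has non-zero image in the $V(\nu)$-isotypic component, and you correctly identify this non-vanishing as the entire content of the theorem. But you then stop exactly there: the passage asserting that ``the dominance guarantees that a suitable $\mathfrak{U}(\mathfrak{n}^+)$-raising \dots does not annihilate the image'' is a restatement of the conclusion, not an argument, and you concede as much by labelling it ``the main obstacle.'' Nothing in the proposal supplies the mechanism --- a Demazure-module surjectivity statement in the precise form needed, a compatible Frobenius splitting of the relevant subvarieties, or an explicit LS path of shape $\mu$ staying in the shifted dominant chamber --- that actually forces the non-vanishing. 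A plan that defers the decisive step is not a proof of a theorem whose only difficulty is that step.

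A secondary point: the attributions are blurred. Kumar's proof works in characteristic zero with Demazure modules and the cohomology of Bott--Samelson--Demazure--Hansen varieties; the characteristic-$p$/Frobenius-splitting route is essentially Mathieu's. This would be harmless if either route were carried out, but since the proposal is only a road map, the road map should at least be accurate. For the purposes of this paper the correct move is the one the authors make: cite the theorem and do not reprove it.
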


\begin{prop}
Let $\Pi \subset \mathcal{X}(T)^+$ be a simple subset whose maximal element $\lambda$ is such that $\langle \lambda, \alpha^\vee \rangle \geqslant r_\alpha$ for every $\alpha \in
\Supp(\lambda) \smallsetminus \Supp(\lambda)^\circ$. If $\Pi \supset \LBQ{\lambda}$, then $X_\Pi$ is a normal compactification of $G$.
\end{prop}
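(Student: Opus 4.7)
By Proposition \ref{prop: criterio tensoriale normalita}, it suffices to show that for every $\nu \in \PQ^+(\lambda)$ there exist $n \in \mathbb{N}$ and $\mu_1, \ldots, \mu_n \in \Pi$ such that $V(\nu + (n-1)\lambda) \subset V(\mu_1) \otimes \cdots \otimes V(\mu_n)$. I would proceed by Noetherian induction on $\nu$ with respect to the partial order $\leqslant^\lambda_\mathbb{Q}$ (well-founded on the finite set $\PQ^+(\lambda)$), the base case $\nu = \lambda$ being trivial.

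For the inductive step, Theorem \ref{teo: descrizione normalizzazione} gives the normality of $\widetilde X_\lambda = X_{\PQ^+(\lambda)}$ and hence, via Proposition \ref{prop: criterio tensoriale normalita} applied to $\PQ^+(\lambda)$, an initial decomposition $V(\nu + (n-1)\lambda) \subset V(\mu_1) \otimes \cdots \otimes V(\mu_n)$ with $\mu_i \in \PQ^+(\lambda)$; Proposition \ref{prop: necessita} then forces $\nu \leqslant^\lambda_\mathbb{Q} \mu_i$ for every $i$, and a direct dominance comparison shows that not every $\mu_i$ can equal $\nu$ unless $\nu = \lambda$. The plan is to split the $\mu_i$ into the \emph{free} ones with $\nu <^\lambda_\mathbb{Q} \mu_i$, to which the inductive hypothesis applies, and the \emph{stuck} ones with $\mu_i = \nu$; for each free $\mu_i$ the induction yields a decomposition into $\Pi$-factors, which can be spliced into the original using Corollary \ref{cor: traslazione} and Lemma \ref{lem: tensor semigroup} at the cost of extra copies of $V(\lambda)$.

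To eliminate the stuck factors $V(\nu)$ I would case-split on the position of $\nu$. If $\nu \in \Pi$ there is nothing to do. If $\nu \in \LB{\lambda} \setminus \LBQ{\lambda}$ or $\nu \in (\PQ^+(\lambda) \setminus \Pi^+(\lambda)) \setminus \LBQ{\lambda}$, the very definition of $\LBQ{\lambda}$ supplies some $\tilde\nu \in \LBQ{\lambda} \subset \Pi$ with $\nu <^\lambda_\mathbb{Q} \tilde\nu$, and I would construct an inclusion of the form $V(\nu + (k-1)\lambda) \subset V(\tilde\nu)^{\otimes a} \otimes V(\lambda)^{\otimes b}$ using the PRV theorem combined with Corollary \ref{cor: traslazione}, exploiting the explicit form of $\tilde\nu - \nu$ (as in Proposition \ref{prop: LB Sp} for the $\mathsf{C}_r$-case) to meet PRV's integer dominance hypothesis. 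If instead $\nu \in \Pi^+(\lambda) \setminus \LBb{\lambda}$, I would first invoke Proposition \ref{prop: criterio tensoriale normalità aggiunto} and Theorem \ref{teo: normalita caso aggiunto} in the adjoint setting to decompose $V(\nu + (n-1)\lambda)$ using $\LBb{\lambda}$, and then reduce each adjoint little brother not already in $\Pi$ to elements of $\Pi$ as in the previous subcase.

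The main obstacle will be producing the inclusion $V(\nu + (k-1)\lambda) \subset V(\tilde\nu)^{\otimes a} \otimes V(\lambda)^{\otimes b}$ in the rational-gap situation, i.e.\ when $\tilde\nu - \nu$ lies in $\mathbb{Q}^+[\Phi^+(\lambda)]$ but not in $\mathbb{N}[\Phi^+(\lambda)]$. This is precisely where the hypothesis $\langle \lambda, \alpha^\vee \rangle \geqslant r_\alpha$ for every $\alpha \in \Supp(\lambda) \smallsetminus \Supp(\lambda)^\circ$ is essential, since it creates enough room in suitably large multiples of $\lambda$ to absorb the denominators of $\tilde\nu - \nu$ and thus satisfy the integer dominance required by PRV; the bounds from Proposition \ref{prop: elementi massimali} and Corollaries \ref{cor: elementi massimali} and \ref{cor: elementi massimali 2} should give the explicit choice of $a,b,k$.
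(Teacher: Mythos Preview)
Your overall strategy---reduce via the adjoint case and then use PRV to handle the remaining weights in $\PQo{\lambda}$---is the right one and coincides with the paper's. But two aspects deserve revision.

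First, the free/stuck decomposition via Proposition~\ref{prop: necessita} is a detour. That proposition is the engine of the \emph{necessity} direction; for sufficiency the paper never invokes it. Once you observe that the case-split in your second and third paragraphs already aims to prove the inductive claim for $\nu$ directly, the initial decomposition $V(\nu+(n-1)\lambda)\subset\bigotimes V(\mu_i)$ with $\mu_i\in\PQ^+(\lambda)$ becomes redundant: if the case-split works, it proves the claim outright; if it does not, the stuck factors $V(\nu)$ remain unhandled regardless. The paper instead reduces immediately to $\Pi=\LBQb{\lambda}$ (by interpolating $X_{\PQ^+(\lambda)}\to X_\Pi\to X_{\LBQb{\lambda}}$) and then to $\nu\in\PQo{\lambda}$ (via Theorem~\ref{teo: normalita caso aggiunto}).

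Second, and more seriously, the ``one-shot'' inclusion $V(\nu+(k-1)\lambda)\subset V(\tilde\nu)^{\otimes a}\otimes V(\lambda)^{\otimes b}$ you hope for in the rational-gap case is not justified, and I do not see how the bounds of Proposition~\ref{prop: elementi massimali} and its corollaries alone would furnish it. The paper's mechanism is different and rests on a tool you do not mention: Proposition~\ref{prop: lambda coni}, which says $\mathcal{C}(\lambda)=\bigcup_n\big(\mathcal{P}(n\lambda)-n\lambda\big)$. Given $\nu<^\lambda_\mathbb{Q}\mu_1$ with $\mu_1\in\LBQ{\lambda}$, this provides $m$ with $m\lambda+\nu-\mu_1\in\mathcal{P}(m\lambda)$; writing $\lambda'=m\lambda$, $\nu'=\nu+(m-1)\lambda$, $\mu_1'=\mu_1+(m-1)\lambda$, and taking $\nu_2\in\PQ^+(\lambda')$ the dominant $W$-conjugate of $\lambda'+\nu'-\mu_1'$, PRV gives $V(\nu'+\lambda')\subset V(\mu_1')\otimes V(\nu_2)$. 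One then checks $\lambda'-\nu_2<_\mathbb{Q}\lambda-\nu$, so induction applies to $\nu_2$---but inside $\PQ^+(\lambda')$, not $\PQ^+(\lambda)$. The passage back is handled by the semigroup reformulation (Proposition~\ref{prop: criteri semigruppi}), under which the claim becomes $\nu-\lambda\in\Omega\big(\LBQb{\lambda}\big)$, together with Corollary~\ref{cor: elementi massimali 2}, which identifies $\LBQ{\lambda'}$ with $\LBQ{\lambda}$ up to the shift by $(m-1)\lambda$. This change of ambient $\lambda$ is exactly what makes the iteration close up, and it is the step your plan is missing.
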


\begin{proof}
Suppose that $X_{\LBQb{\lambda}}$ is a normal compactification of $G$:
then the claim follows by considering the natural projections
$$
	X_{\PQ^+(\lambda)} \longrightarrow X_\Pi \longrightarrow X_{\LBQb{\lambda}}.
$$
Therefore we are reduced to the case $\Pi = \LBQb{\lambda}$.
Since by Theorem \ref{teo: descrizione normalizzazione} $X_{\PQ^+(\lambda)}$ is a normal compactification of $G$, by Proposition \ref{prop: criterio
tensoriale normalita},
it is enough to show that $\nu-\lambda \in \Omega\big(\LBQb{\lambda}\big)$
for every $\nu \in \PQ^+(\lambda)$. Denote
$$
	\PQo{\lambda} = (\PQ^+(\lambda) \smallsetminus \Pi^+(\lambda)) \cup \LB{\lambda}:
$$
by the normality of the adjoint compactification $X_{\LBb{\lambda}}$ (see Theorem \ref{teo: normalita caso aggiunto}), it is sufficient to show that $\nu -
\lambda \in \Omega(\LBQb{\lambda})$ for any $\nu \in \PQo{\lambda}$.

Suppose that $\nu \in \PQo{\lambda}$ is not maximal w.r.t. $\leqslant^\lambda_\mathbb{Q}$ and let $\mu_1 \in \LBQ{\lambda}$ be such that $\nu <_\mathbb{Q}^\lambda \mu_1$: then $\nu -
\mu_1 \in \mathcal{C}(\lambda)$, hence by Proposition \ref{prop: lambda coni} there exists $m \in \mathbb{N}$ such that $m\lambda + \nu - \mu_1 \in \mathcal{P}(m\lambda)$. Denote
$$
	\nu' = \nu + (m -1)\lambda, \qquad \mu_1' = \mu_1 + (m -1)\lambda,
	\qquad \lambda' = m\lambda.
$$
and denote $\nu_2 = w(\lambda' + \nu' - \mu_1') \in \mathcal{X}(T)^+$ the dominant weight which is conjugated to $\lambda' + \nu' - \mu_1'$ under the Weyl group $W$.
Then $\nu' \leqslant^\lambda_\mathbb{Q} \mu_1' <_\mathbb{Q} \lambda'$ and, since $\mathcal{P}(\lambda')$ is $W$-stable, it follows that $\nu_2 \leqslant_\mathbb{Q} \lambda'$. Moreover $\nu' + \lambda' =
\mu_1' + w^{-1}\nu_2$, hence the PRV conjecture implies
$$
	V(\nu' + \lambda') \subset V(\mu_1') \otimes V(\nu_2).
$$
Suppose that $\nu_2 \in \LBQ{\lambda'}$: then by Corollary \ref{cor: elementi massimali 2} there exists $\mu_2 \in \LBQ{\lambda}$ such that $\nu_2 = \mu_2 +
(m-1) \lambda$. Hence $\lambda - \mu_1 = \lambda' - \mu_1'$ and $\lambda - \mu_2 = \lambda' - \nu_2$ and by Proposition \ref{prop: criteri semigruppi} i) we get
$$
	\nu - \lambda = \nu' - \lambda' \in \Omega(\lambda', \mu_1', \nu_2) =
	\Omega(\lambda, \mu_1, \mu_2) \subset \Omega\big(\LBQb{\lambda}\big).
$$

Suppose now that $\nu_2 \in \PQo{\lambda'}$ is not maximal w.r.t. $\leqslant^\lambda_\mathbb{Q}$. Since $\nu' + \lambda' - \mu_1' \leqslant \nu_2$, it follows that
$$
	\lambda' - \nu_2 \leqslant \lambda' - (\lambda' + \nu' - \mu'_1) = \mu_1 - \nu <_\mathbb{Q} \lambda - \nu:
$$
hence we may proceed by decreasing induction on the rational dominance order and we may assume that they exist $n > 2$ and $\mu_2', \ldots , \mu_n' \in
\LBQ{\lambda'}$ such that
$$
	V(\nu_2 + (n-2) \lambda') \subset V(\mu'_2) \otimes \ldots \otimes V(\mu'_n).
$$
Combining previous tensorial inclusions and applying Corollary \ref{cor: traslazione} we get then
$$
	V(\nu' + (n-1) \lambda') \subset V(\mu_1') \otimes V(\mu'_2) \otimes \ldots \otimes V(\mu'_n).
$$
By Corollary \ref{cor: elementi massimali 2} they exist $\mu_2, \ldots , \mu_n \in \LBQ{\lambda}$ such that $\mu_i' = \mu_i + (m-1) \lambda$ for every $i = 2,
\ldots, n$. Hence $\lambda - \mu_i = \lambda' - \mu_i'$ for every $i$ and by Proposition \ref{prop: criteri semigruppi} i) we get
\[
	\nu - \lambda = \nu' - \lambda' \in \Omega(\lambda', \mu_1', \ldots, \mu_n') =
	\Omega(\lambda, \mu_1, \ldots, \mu_n) \subset \Omega \big( \LBQb{\lambda} \big). \qedhere
\]
\end{proof}

\section{Smoothness}

If $\Pi \subset \mathcal{X}(T)^+$ is simple and faithful with maximal element $\lambda$, then the normalization of $X_\Pi$ coincides with $\widetilde X_\lambda$: hence by
Theorem \ref{teo: caratterizzazione normalita} it follows that $X_\Pi$ is smooth if and only if $\widetilde X_\lambda$ is smooth and $\Pi \supset \LBQ{\lambda}$.
Therefore the problem of studying the smoothness of $X_\Pi$ reduces to the study of the smoothness of $\widetilde X_\lambda$.

We now recall some results about the $\mathbb{Q}$-factoriality and the smoothness of $\widetilde X_\lambda$. Recall that a normal variety $X$ is called \textit{locally
factorial} if $\Pic(X) = \mathrm{Cl}(X)$, while it is called $\mathbb{Q}$-\textit{factorial} if $\Pic(X)_\mathbb{Q} = \mathrm{Cl}(X)_\mathbb{Q}$. In the case of a simple
spherical variety, these properties are nicely expressed by the combinatorial properties of the colored cone.

\begin{prop} [{\cite[Rem. 2.2.ii]{Br1}, \cite[Prop. 4.2]{Br2}}]	\label{prop: fattorialita}
Let $\lambda \in \mathcal{X}(T)^+$.
\begin{itemize}
	\item[i)] $\widetilde X_\lambda$ is $\mathbb{Q}$-factorial if and only if $\mathcal{C}(\widetilde X_\lambda)$
	is a simplicial cone (i.e. generated by linearly independent vectors).
	\item[ii)] $\widetilde X_\lambda$ is locally factorial if and only if $\mathcal{C}(\widetilde X_\lambda)$
	is generated by a basis of $\mathcal{X}(T)^\vee$.
\end{itemize}
\end{prop}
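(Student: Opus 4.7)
The plan is to derive both statements from the combinatorial description of $\Pic(\widetilde X_\lambda)$ and $\mathrm{Cl}(\widetilde X_\lambda)$ for a simple complete spherical $G \times G$-variety, as recalled in Section~1. On one hand, $\Pic(\widetilde X_\lambda)$ is free of rank $|\Delta^\vee| - |\mathcal{D}(\widetilde X_\lambda)|$, generated by the classes of the divisors in $\mathcal{D}(G) \smallsetminus \mathcal{D}(\widetilde X_\lambda)$. On the other hand, the relations in $\mathrm{Cl}(\widetilde X_\lambda)$ come from the principal divisors of $B \times B^-$-eigenfunctions, so that the isomorphism $\Bbbk(G)^{(B \times B^-)}/\Bbbk^{*} \simeq \mathcal{X}(T)$ yields a short exact sequence
$$0 \longrightarrow \mathcal{X}(T) \xrightarrow{\;\chi \,\mapsto\, (\langle \chi, \rho(D)\rangle)_D\;} \mathbb{Z}\big[\mathcal{D}(G) \cup \mathcal{N}(\widetilde X_\lambda)\big] \longrightarrow \mathrm{Cl}(\widetilde X_\lambda) \longrightarrow 0,$$
the injectivity on the left being a consequence of the fact that $\rho(\mathcal{D}(G)) = \Delta^\vee$ spans $\mathcal{X}(T)^\vee_\mathbb{Q}$.

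For part~i), I would simply compare ranks: the exact sequence gives $\mathrm{rk}\,\mathrm{Cl}(\widetilde X_\lambda) = |\Delta^\vee| + |\mathcal{N}(\widetilde X_\lambda)| - \mathrm{rk}\,\mathcal{X}(T)^\vee$, so the equality $\mathrm{rk}\,\Pic = \mathrm{rk}\,\mathrm{Cl}$ amounts to $|\mathcal{D}(\widetilde X_\lambda)| + |\mathcal{N}(\widetilde X_\lambda)| = \mathrm{rk}\,\mathcal{X}(T)^\vee$. Since $\mathcal{C}(\widetilde X_\lambda)$ is full-dimensional (because $\widetilde X_\lambda$ is complete) and is generated by the vectors $\rho(D)$ for $D \in \mathcal{D}(\widetilde X_\lambda) \cup \mathcal{N}(\widetilde X_\lambda)$, this equality translates exactly into the linear independence of these generators, i.e.\ the simpliciality of $\mathcal{C}(\widetilde X_\lambda)$.

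For part~ii), I would upgrade the rank comparison to a lattice-level statement. The inclusion $\Pic(\widetilde X_\lambda) \hookrightarrow \mathrm{Cl}(\widetilde X_\lambda)$ is an isomorphism precisely when, modulo the principal divisors $\mathrm{div}(f_\chi)$, every generator $D \in \mathcal{D}(\widetilde X_\lambda) \cup \mathcal{N}(\widetilde X_\lambda)$ can be expressed as an integer combination of divisors in $\mathcal{D}(G) \smallsetminus \mathcal{D}(\widetilde X_\lambda)$; equivalently, the composite map $\mathcal{X}(T) \longrightarrow \mathbb{Z}[\mathcal{D}(\widetilde X_\lambda) \cup \mathcal{N}(\widetilde X_\lambda)]$ obtained by projection must be surjective. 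Dualising, this says that $\rho(\mathcal{D}(\widetilde X_\lambda) \cup \mathcal{N}(\widetilde X_\lambda))$ extends to a $\mathbb{Z}$-basis of $\mathcal{X}(T)^\vee$; combined with part~i) this forces it to be itself a full basis, and since these vectors generate $\mathcal{C}(\widetilde X_\lambda)$ the condition matches the one in the statement.

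The main obstacle is the second part, where one must identify $\Pic$ inside $\mathrm{Cl}$ at the level of lattices and not just rational vector spaces: one needs to know that the Cartier condition on a Weil divisor of $\widetilde X_\lambda$ corresponds to an integral (and not merely rational) linear functional on $\mathcal{C}(\widetilde X_\lambda)$. This is precisely the content of Brion's descriptions cited in the statement, and once it is in place both assertions reduce to the elementary lattice-theoretic observations above.
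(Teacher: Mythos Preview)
The paper does not give a proof of this proposition: it is stated with references to Brion's papers \cite{Br1} and \cite{Br2} and used as a black box. Your argument is in fact a clean reconstruction of how those results specialize to the simple compactification $\widetilde X_\lambda$, and it is essentially correct.

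A couple of small points are worth tightening. First, in part~i) you implicitly use that no two of the vectors $\rho(D)$, $D\in\mathcal{D}(\widetilde X_\lambda)\cup\mathcal{N}(\widetilde X_\lambda)$, are proportional; this is true here because the colors give distinct simple coroots and the $G\times G$-stable divisors give negative multiples of fundamental coweights, but it should be said, since otherwise ``simplicial'' does not immediately imply the cardinality equality you need. Second, when you invoke that $\Pic(\widetilde X_\lambda)$ is free of rank $|\Delta^\vee|-|\mathcal{D}(\widetilde X_\lambda)|$, you are using that the divisors in $\mathcal{D}(G)\smallsetminus\mathcal{D}(\widetilde X_\lambda)$ are actually a basis of $\Pic$, not merely a generating set; this is indeed what \cite[Prop.~2.2]{Br1} gives, but the phrasing recalled in Section~1 is slightly weaker. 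Your final paragraph correctly identifies that the substantive input---that Cartier divisors correspond to integral rather than rational functionals on the colored cone---is precisely what the cited results of Brion supply, so your write-up is honest about where the work lies.
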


More explicitly, the $\mathbb{Q}$-factoriality of $\widetilde X_\lambda$ can be characterized as follows, where we denote by $\Delta^e$ the set of the extremal roots of
$\Delta$.

\begin{prop}[{\cite[Prop.~3.4 and Cor.~3.5]{BGMR}}] \label{prop: Q-fattorialita}
Let $\lambda \in \mathcal{X}(T)^+$. The variety $\widetilde X_\lambda$ is $\mathbb{Q}$-factorial
if and only if the following conditions are fulfilled:
\begin{itemize}
	\item[i)] For every connected component $\Delta'$ of $\Delta$,
$\Supp(\lambda)\cap \Delta'$ is connected and, in case it contains a unique element,
then this element is an extremal root of $\Delta'$;
	\item[ii)] $\Supp(\lambda)$ contains every simple root which is adjacent
	to three other simple roots and at least two of the latter.
\end{itemize}
If this is the case, then the extremal rays of $\mathcal{C}(\widetilde X_\lambda)$ are the half-lines generated by
the elements in the set
$$
\{	\alpha^{\vee} \, : \, \alpha\in \Delta \smallsetminus \Supp(\lambda) \} \cup
\{ -\omega_{\alpha}^{\vee} \, : \,  \alpha\in \Supp(\lambda)^\circ \cup \big(\Delta^e \smallsetminus \Supp(\lambda)\big)\}.
$$
\end{prop}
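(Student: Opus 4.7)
By Proposition \ref{prop: fattorialita}(i), $\widetilde X_\lambda$ is $\mathbb{Q}$-factorial if and only if the cone $\mathcal{C}(\widetilde X_\lambda) \subset \mathcal{X}(T)^\vee_\mathbb{Q}$ is simplicial. Recalling from Section 1 that $\rho(\mathcal{D}(\widetilde X_\lambda)) = \Delta^\vee \smallsetminus \Supp(\lambda)^\vee$ and that $\mathcal{C}(\widetilde X_\lambda)$ is generated by these colors together with the negative Weyl chamber, a natural generating set is
\[
\mathcal G = \{\alpha^\vee : \alpha \in \Delta \smallsetminus \Supp(\lambda)\} \;\cup\; \{-\omega_\beta^\vee : \beta \in \Delta\}.
\]
Since $\dim \mathcal{X}(T)^\vee_\mathbb{Q} = |\Delta|$, the task is to prune $\mathcal G$ down to exactly $|\Delta|$ linearly independent extremal rays.

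The main tool is the Cartan identity
\[
\alpha^\vee = 2\omega_\alpha^\vee - \sum_{\gamma \text{ adj. to }\alpha} n_{\alpha\gamma}\,\omega_\gamma^\vee, \qquad n_{\alpha\gamma} = -\langle \alpha^\vee,\gamma\rangle \in \mathbb{Z}_{>0}.
\]
For sufficiency (assuming (i) and (ii)), I would show iteratively that each $-\omega_\beta^\vee$ with $\beta \notin \Supp(\lambda)^\circ \cup (\Delta^e \smallsetminus \Supp(\lambda))$ is a non-negative combination of the listed candidate rays. The elimination is carried out component-by-component along each connected piece of $\Delta$: starting from an extremal root $\alpha_0 \in \Delta \smallsetminus \Supp(\lambda)$, the Cartan relation for $\alpha_0$ has a single neighbor $\gamma_1$ and immediately expresses $-\omega_{\gamma_1}^\vee$ as a non-negative combination of $\alpha_0^\vee$ and $-\omega_{\alpha_0}^\vee$; one then walks along the Dynkin diagram towards $\Supp(\lambda)$, at each step applying the Cartan relation for the current simple root and substituting previously expressed coweights. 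Condition (i) ensures that this walk produces only non-negative coefficients on simply-laced chains, while condition (ii) handles the same positivity check at trivalent nodes, where one must verify that the three coweights adjacent to such a node can still be eliminated without sign issues.

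For necessity, I would argue contrapositively. If (i) fails — either $\Supp(\lambda) \cap \Delta'$ is disconnected in some component $\Delta'$, or it reduces to a single non-extremal root — then a coweight $-\omega_\beta^\vee$ sandwiched between pieces of $\Supp(\lambda)$ cannot be written positively in terms of the surviving generators, producing an extra extremal ray. Similarly, if (ii) fails at a trivalent node $\alpha$ (that is, either $\alpha \notin \Supp(\lambda)$, or $\alpha \in \Supp(\lambda)$ but fewer than two of its neighbors lie in $\Supp(\lambda)$), an explicit computation in the local $\mathsf D_4$-subdiagram around $\alpha$ exhibits an extra extremal ray, showing $\mathcal{C}(\widetilde X_\lambda)$ is not simplicial. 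Once simpliciality under (i)–(ii) is established, a direct dimension count gives $|\Supp(\lambda)^\circ|+|\Delta^e \smallsetminus \Supp(\lambda)| = |\Supp(\lambda)|$, so the listed candidates are exactly $|\Delta|$ in number, and the identification of the extremal rays follows from the elimination procedure.

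The main obstacle is the bookkeeping required to guarantee positivity of coefficients in the walking argument, particularly at trivalent nodes in types $\mathsf D$ and $\mathsf E$, where condition (ii) enters in a delicate way. The remaining steps — the dimension counts on simply-laced chains and the linear independence of the final set of rays — reduce to routine computations with the Cartan matrix.
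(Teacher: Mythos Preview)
The paper does not contain its own proof of this proposition: it is quoted from \cite{BGMR} (Proposition~3.4 and Corollary~3.5 there), with only the remark following the statement that the result, proved in \cite{BGMR} for adjoint groups, carries over to an arbitrary semisimple $G$ because the cone $\mathcal{C}(\widetilde X_\lambda)$ depends only on $\Supp(\lambda)$. So there is no proof in the present paper to compare your sketch against.

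That said, your outline is a reasonable plan and is in the spirit of the argument in \cite{BGMR}: use the relation $\alpha^\vee = \sum_{\gamma\in\Delta} \langle \gamma, \alpha^\vee\rangle\, \omega_\gamma^\vee$ to eliminate the redundant $-\omega_\gamma^\vee$ from the generating set, walking along the Dynkin diagram from the extremal roots of $\Delta$ not in $\Supp(\lambda)$ toward $\Supp(\lambda)$. The dimension count $|\Supp(\lambda)^\circ|+|\Delta^e\smallsetminus\Supp(\lambda)| = |\Supp(\lambda)|$ is correct under hypotheses (i)--(ii). What remains to turn the sketch into a proof is exactly what you flag as the obstacle: checking that the recursively produced coefficients stay non-negative along each chain and at the trivalent nodes (this is where (ii) enters), and, for the converse direction, actually exhibiting an extra extremal ray when (i) or (ii) fails. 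These are genuine case analyses; since the present paper simply imports the result, there is no need to reproduce them here.
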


Previous result is stated in \cite{BGMR} in the case of a semisimple adjoint group, however it holds for any semisimple group since the cone $\mathcal{C}(\widetilde
X_\lambda)$ depends only on the set $\Supp(\lambda)$. Previous proposition allows to reduce the study of the smoothness of the variety $\widetilde X_\lambda$ to the case
of a simple group $G$ as follows.

\begin{lem}	\label{lem: riduzione G semplice}
If $\lambda \in \mathcal{X}(T)^+$ is an almost faithful weight such that $\widetilde X_\lambda$ is locally factorial,
then $G = G_1 \times \ldots \times G_n$ is a direct product of simple groups.
If moreover $T_i = T\cap G_i$ and $\lambda_i = \lambda \bigr|_{T_i}$,
then
$$
	\widetilde X_\lambda = \widetilde X_{\lambda_1} \times \ldots \times \widetilde X_{\lambda_n},
$$
where $\widetilde X_{\lambda_i}$ denotes the normalization of $X_{\lambda_i}$ in $\Bbbk(G_i)$.
\end{lem}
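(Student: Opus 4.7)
The plan is to exploit the explicit description of the extremal rays of the colored cone $\mathcal{C}(\widetilde X_\lambda)$ coming from Proposition~\ref{prop: Q-fattorialita}, together with the local factoriality criterion of Proposition~\ref{prop: fattorialita} ii), to force a decomposition of the cocharacter lattice $\mathcal{X}(T)^\vee$ along the connected components of the Dynkin diagram.

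Concretely, I would first observe that local factoriality implies $\mathbb{Q}$-factoriality, so the hypotheses of Proposition~\ref{prop: Q-fattorialita} are satisfied and the generators of the extremal rays of $\mathcal{C}(\widetilde X_\lambda)$ are precisely the simple coroots $\alpha^\vee$ with $\alpha \in \Delta \smallsetminus \Supp(\lambda)$ together with the negative fundamental coweights $-\omega_\alpha^\vee$ for $\alpha \in \Supp(\lambda)^\circ \cup (\Delta^e \smallsetminus \Supp(\lambda))$. Write $\Delta = \Delta_1 \sqcup \ldots \sqcup \Delta_n$ as a disjoint union of connected components, and denote by $\Lambda^\vee_{\Delta_i} \subset \Lambda^\vee$ the coweight lattice associated to $\Delta_i$; then $\Lambda^\vee = \Lambda^\vee_{\Delta_1} \oplus \ldots \oplus \Lambda^\vee_{\Delta_n}$ and each generator listed above belongs to one of the summands $\Lambda^\vee_{\Delta_i} \otimes \mathbb{Q}$. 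Since $\lambda$ is almost faithful, $\Supp(\lambda) \cap \Delta_i \neq \varnothing$ for every $i$, so each $\Delta_i$ contributes at least one of the listed generators.

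By Proposition~\ref{prop: fattorialita} ii) these generators form a $\mathbb{Z}$-basis of $\mathcal{X}(T)^\vee$. Since each basis vector lies in a single $\Lambda^\vee_{\Delta_i}\otimes\mathbb{Q}$, it follows that $\mathcal{X}(T)^\vee = \bigoplus_i \big(\mathcal{X}(T)^\vee \cap (\Lambda^\vee_{\Delta_i}\otimes\mathbb{Q})\big)$, and dually that $\mathcal{X}(T)$ itself splits as a direct sum of sublattices supported on the various components. This is precisely the condition that $G$ decomposes as a direct product of simple groups $G_1 \times \ldots \times G_n$, with $T = T_1 \times \ldots \times T_n$ and $\mathcal{X}(T_i) = \mathcal{X}(T) \cap \Lambda_{\Delta_i}$.

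For the factorization of the variety, once the direct product structure is established the weight $\lambda$ splits as $\lambda = \lambda_1 + \ldots + \lambda_n$ and the module $V(\lambda)$ decomposes as the outer tensor product $V(\lambda_1) \boxtimes \ldots \boxtimes V(\lambda_n)$, so the natural Segre-type morphism realizes $X_{\lambda_1} \times \ldots \times X_{\lambda_n}$ as a $G \times G$-equivariant compactification of $G$ with the same open orbit as $X_\lambda$. Passing to normalizations in $\Bbbk(G) = \Bbbk(G_1) \otimes \ldots \otimes \Bbbk(G_n)$, and observing that a product of normal varieties is normal, we get a birational $G \times G$-equivariant morphism $\widetilde X_{\lambda_1} \times \ldots \times \widetilde X_{\lambda_n} \longrightarrow \widetilde X_\lambda$; alternatively, one checks directly that the colored cone of $\widetilde X_\lambda$ splits as the product of the colored cones $(\mathcal{C}(\widetilde X_{\lambda_i}), \mathcal{D}(\widetilde X_{\lambda_i}))$, and the claim then follows from the uniqueness part of the Luna--Vust correspondence. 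I expect the main subtlety to be precisely in this last step: verifying that the splitting of $\mathcal{X}(T)^\vee$ produced by the basis argument is compatible with the set of colors $\mathcal{D}(\widetilde X_\lambda) = \Delta^\vee \smallsetminus \Supp(\lambda)^\vee$ so that the colored cone really factors, but this is immediate since $\Supp(\lambda)$ itself respects the partition of $\Delta$ into components.
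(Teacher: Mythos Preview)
Your argument is correct and follows essentially the same route as the paper: both deduce the splitting of $\mathcal{X}(T)^\vee$ from Proposition~\ref{prop: fattorialita}~ii) together with the fact (Proposition~\ref{prop: Q-fattorialita}) that every extremal ray of $\mathcal{C}(\widetilde X_\lambda)$ lies in a single $(\Lambda^i_{\mathrm{rad}})^\vee_\mathbb{Q}$. For the product decomposition of the variety the paper simply invokes the isomorphism $X_\lambda \simeq X_{\lambda_1} \times \cdots \times X_{\lambda_n}$ from \cite[Lemma~1.1]{BGMR}, whereas you spell out the Segre/colored-cone argument; both are fine.
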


\begin{proof}
Denote $G_1, \ldots, G_n$ the simple factors of $G$ and denote $\Lambda^i_{\mathrm{rad}}$ the root lattice of $G_i$: then $\Lambda_{\mathrm{rad}} =
\Lambda^1_{\mathrm{rad}} \times \ldots \times \Lambda^n_{\mathrm{rad}}$ and $G$ is the direct product of its simple factors if and only if $\mathcal{X}(T)^\vee$ is the
direct product of the intersections $\mathcal{X}(T)^\vee\cap (\Lambda^i_{\mathrm{rad}})^\vee_\mathbb{Q}$. Hence the first claim follows by Proposition \ref{prop:
fattorialita} ii) together with the description of the extremal rays of $\mathcal{C}(\widetilde X_\lambda)$ given in Proposition \ref{prop: Q-fattorialita}. The second
claim follows straightforwardly from the isomorphism $X_\lambda \simeq X_{\lambda_1} \times \ldots \times X_{\lambda_n}$ (see \cite[Lemma~1.1]{BGMR}).
\end{proof}

In the case of an adjoint group, the smoothness of $\widetilde X_\lambda$ has been characterized in \cite{BGMR} as follows.

\begin{teo} [{\cite[Thm.~3.6]{BGMR}}]	\label{teo: smoothness caso aggiunto}
Suppose that $G$ is simple and adjoint and let $\lambda \in \mathcal{X}(T)^+$. Then $\widetilde X_\lambda$ is smooth if and only if $X_\lambda$ is smooth if and only if $\lambda$
satisfies the following conditions:
\begin{itemize}
	\item[i)] If $\Supp(\lambda)$ contains a long root, then it contains also the unique short simple root
	which is non-orthogonal to a long simple root;
	\item[ii)] $\Supp(\lambda)$ is connected and, in case it contains a unique element, then this element is an extremal root of $\Delta$;
	\item[iii)] $\Supp(\lambda)$ contains every simple root which is adjacent to three other simple roots and at least two of the latter;
	\item[iv)] Every connected component of $\Delta\smallsetminus \Supp(\lambda)$ is of type $\mathsf{A}$.
\end{itemize}
\end{teo}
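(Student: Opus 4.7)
The plan is to interpret the theorem through the theory of colored cones of simple spherical embeddings. Since $G$, regarded as a $G\times G$-homogeneous space, is spherical, the variety $\widetilde X_\lambda$ is a simple spherical embedding uniquely determined by its colored cone $(\mathcal{C}(\widetilde X_\lambda),\mathcal{D}(\widetilde X_\lambda))$. By the local structure theorem for spherical varieties, there exists a $B\times B^-$-stable affine open neighbourhood of a point of the closed orbit of $\widetilde X_\lambda$ which splits, up to a product with an affine space, as an affine toric variety for a quotient torus of $T$. Since the open $G\times G$-orbit is $G$ itself and hence smooth, the smoothness of $\widetilde X_\lambda$ reduces to the smoothness of this toric slice, which in turn is equivalent to $\mathcal{C}(\widetilde X_\lambda)$ being generated by part of a $\mathbb{Z}$-basis of $\mathcal{X}(T)^\vee$; since $G$ is adjoint, $\mathcal{X}(T)^\vee=\Lambda^\vee$.

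Assume first that $\widetilde X_\lambda$ is smooth. Then it is in particular $\mathbb{Q}$-factorial, so Proposition \ref{prop: Q-fattorialita} gives (ii) and (iii) together with the explicit list of generators of the extremal rays of $\mathcal{C}(\widetilde X_\lambda)$, namely the simple coroots $\alpha^\vee$ for $\alpha\in\Delta\smallsetminus\Supp(\lambda)$ and the negative fundamental coweights $-\omega_\alpha^\vee$ for $\alpha\in\Supp(\lambda)^\circ\cup(\Delta^e\smallsetminus\Supp(\lambda))$. A Dynkin-type case analysis of when these vectors extend to a $\mathbb{Z}$-basis of $\Lambda^\vee$ yields the remaining conditions. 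Indeed, if a connected component of $\Delta\smallsetminus\Supp(\lambda)$ is not of type $\mathsf{A}$, then the associated simple coroots together with the adjacent $-\omega^\vee$ generators span a proper sublattice, because in types $\mathsf{B}$, $\mathsf{C}$, $\mathsf{D}$, $\mathsf{E}$, $\mathsf{F}$, $\mathsf{G}$ the fundamental coweights have non-unit denominators on the basis of simple coroots; this gives (iv). Similarly, in a non-simply-laced component, the presence in $\Supp(\lambda)$ of a long simple root together with the absence of the first short simple root non-orthogonal to a long one forces the generators to miss a primitive vector of $\Lambda^\vee$, yielding (i).

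For the converse, assume (i)--(iv). Conditions (ii) and (iii) give $\mathbb{Q}$-factoriality by Proposition \ref{prop: Q-fattorialita}, so the cone is simplicial with the explicit generators described there. Condition (iv) ensures that the coroots $\alpha^\vee$ for $\alpha\in\Delta\smallsetminus\Supp(\lambda)$ are simple coroots of type-$\mathsf{A}$ subsystems, for which all involved fundamental coweights have integer coefficients on the root basis; condition (i) controls the non-simply-laced interfaces between $\Supp(\lambda)$ and its complement. A direct verification in each Dynkin type then shows that the extremal generators of $\mathcal{C}(\widetilde X_\lambda)$ form part of a $\mathbb{Z}$-basis of $\Lambda^\vee$, and by the local structure theorem $\widetilde X_\lambda$ is smooth. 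To complete the triple equivalence it suffices to show that under (i)--(iv) the normalization morphism $\widetilde X_\lambda\longrightarrow X_\lambda$ is an isomorphism, i.e.\ that $X_\lambda$ is already normal. By Theorem \ref{teo: normalita caso aggiunto} this amounts to $\LB{\lambda}=\varnothing$, which follows immediately from Definition \ref{def: twin} and (i): the latter rules out the only configuration in which an adjoint little brother arises, namely that $\Supp(\lambda)$ contains a long simple root of a non-simply-laced component $\Delta'$ while missing the first short simple root $\alpha_q$ of $\Delta'$.

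The main technical obstacle is the Dynkin-type case analysis used to convert the abstract basis condition on the generators of $\mathcal{C}(\widetilde X_\lambda)$ into the concrete combinatorial conditions (i) and (iv). Particular care is required for the exceptional types $\mathsf{E}_n$ and for type $\mathsf{D}$, where the fundamental coweights expand with fractional coefficients on the basis of simple coroots and where subtle integrality constraints arise from the interplay between the coroots of $\Delta\smallsetminus\Supp(\lambda)$ and the negative fundamental coweights bordering them.
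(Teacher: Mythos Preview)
The paper does not prove this theorem; it is quoted from \cite{BGMR} and used as a black box. So there is no ``paper's own proof'' to compare against. I will therefore assess your argument on its own merits.

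Your reduction via the local structure theorem contains a genuine gap. You assert that a $B\times B^-$-stable affine neighbourhood of a closed-orbit point splits, up to an affine-space factor, as an affine \emph{toric} variety for a quotient of $T$, and that smoothness of $\widetilde X_\lambda$ is then equivalent to $\mathcal{C}(\widetilde X_\lambda)$ being generated by part of a $\mathbb{Z}$-basis of $\mathcal{X}(T)^\vee$. But the closed orbit of $\widetilde X_\lambda$ is $G/P_\lambda\times G/P_\lambda^-$, where $P_\lambda$ is the parabolic whose Levi $L_\lambda$ has simple roots $\Delta\smallsetminus\Supp(\lambda)$. The local structure theorem therefore produces a slice which is an affine $L_\lambda\times L_\lambda$-variety containing $L_\lambda$ as open orbit, i.e.\ a simple compactification of the (generally non-abelian) reductive group $L_\lambda$, not a toric variety. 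This slice is toric only when $\lambda$ is regular. Consequently the condition you state is exactly local factoriality (Proposition~\ref{prop: fattorialita}~ii)), which is strictly weaker than smoothness for colored embeddings. The correct criterion is Timashev's (Theorem~\ref{teo: smooth-timashev}), which imposes, beyond local factoriality, that every connected component of $\Delta\smallsetminus\Supp(\lambda)$ be of type $\mathsf{A}$ and the compatibility condition~(iii) between the dual basis $\mathcal{B}(\lambda)^*$ and the type-$\mathsf A$ pieces. Condition~(iv) of the theorem is thus not a consequence of the lattice-basis condition, as you claim, but enters independently through Timashev's criterion; your lattice argument for~(iv) (``non-$\mathsf A$ components give non-unit denominators'') does not go through, since the coroots $\alpha^\vee$ with $\alpha\in\Delta\smallsetminus\Supp(\lambda)$ are always primitive in $\Lambda^\vee$ regardless of the type of the component.

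Your treatment of the equivalence $\widetilde X_\lambda$ smooth $\Longleftrightarrow$ $X_\lambda$ smooth is correct in spirit: under~(i) one has $\LB{\lambda}=\varnothing$, whence $X_\lambda$ is normal by Theorem~\ref{teo: normalita caso aggiunto} and the normalization map is an isomorphism. To repair the main argument, replace the toric-slice reduction by Theorem~\ref{teo: smooth-timashev} and carry out the case analysis against its three conditions; this is what \cite{BGMR} does.
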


Therefore we only need to consider the case of a simple non-adjoint group. A general criterion for the smoothness of a group compactification was given by
Timashev in \cite{Ti}; for convenience, we will use a generalization which can be found in \cite{Ru} in the more general context of symmetric spaces. We
recall it in the case of the compactification $\widetilde X_\lambda$.

\begin{teo}[see {\cite[Thm.~2.2]{Ru}, \cite[Thm.~9]{Ti}}] \label{teo: smooth-timashev}
Let $\lambda \in \mathcal{X}(T)^+$. Then $\widetilde X_\lambda$ is smooth if and only if the following
conditions are fulfilled:
\begin{itemize}
\item[i)] All connected components of $\Delta \smallsetminus \Supp(\lambda)$ are of type $\mathsf{A}$ and there
  are no more than $|\Supp(\lambda)|$ of them;
\item[ii)]  The cone $\mathcal{C}(\widetilde X_\lambda)$ is simplicial and it is generated by a
  basis $\mathcal{B}(\lambda) \subset \mathcal{X}(T)^\vee$;
\item[iii)]  One can enumerate the simple roots in order of their positions
  at Dynkin diagrams of connected components
  $I_k = \{\alpha_{1}^{k},\ldots,\alpha_{n_{k}}^{k}\}$ of $\Delta \smallsetminus \Supp(\lambda)$ ,
  $k=1,\ldots,n$, and partition the dual basis $\mathcal{B}(\lambda)^* \subset \mathcal{X}(T)$ into subsets
  $J_k = \{\pi_{1}^{k},\ldots,\pi_{n_{k}+1}^{k}\}$,
  $k=1,\ldots,p$ (with $p\geq n$) in such a way that
  $\langle\pi_{j}^{k}, (\alpha_{i}^{h})^\vee \rangle = \delta_{i,j}\delta_{h,k}$
  and $\pi_{j}^{k} - \frac{j}{n_{k}+1} \pi_{n_{k}+1}^{k}$
  is the $j$-th fundamental weight of the root system generated by
  $I_k$ for all $j,k$.
\end{itemize}
\end{teo}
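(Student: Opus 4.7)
The plan is to apply the local structure theorem for spherical varieties combined with Luna--Vust theory to reduce smoothness of $\widetilde X_\lambda$ to the three combinatorial conditions on the pair $\bigl(\mathcal{C}(\widetilde X_\lambda),\Delta\smallsetminus\Supp(\lambda)\bigr)$. Since $\widetilde X_\lambda$ is a simple $G\times G$-embedding and $G\times G$ acts by morphisms, its smoothness is equivalent to smoothness at a single $B\times B^-$-fixed point $y_0$ of the unique closed orbit.

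The first step is to produce, via Brion's local structure theorem applied to the open $B\times B^-$-orbit, an affine $B\times B^-$-stable open neighborhood $U\subset\widetilde X_\lambda$ of $y_0$ of the shape $U\simeq\mathbb{A}^N\times Z$, where $\mathbb{A}^N$ absorbs the unipotent directions and $Z$ is an affine slice on which a Levi subgroup $L$ acts, with derived root system $\Delta\smallsetminus\Supp(\lambda)$, and such that $Z$ is a toroidal embedding of the quotient torus $L/L'$. Thus $\widetilde X_\lambda$ is smooth if and only if $Z$ is smooth at its unique $L$-fixed point $z_0$.

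The second step splits the smoothness of $Z$ at $z_0$ into two parts. Standard toric geometry in the directions transverse to $L'$ forces $\mathcal{C}(\widetilde X_\lambda)$ to be simplicial and generated by a basis $\mathcal{B}(\lambda)$ of the relevant cocharacter lattice, which is condition (ii) and is consistent with Proposition~\ref{prop: fattorialita}~ii). Granted (ii), $Z$ is smooth if and only if the $L'$-action on the tangent space $T_{z_0}Z$ is a direct sum of standard $\mathrm{SL}_{n_k+1}$-representations on $\Bbbk^{n_k+1}$, one for each connected component $I_k$. This immediately forces every $I_k$ to be of type $\mathsf{A}$ and bounds the number of components by $|\Supp(\lambda)|$, yielding condition (i); simultaneously one extracts condition (iii) by computing the $T$-weights on each $\Bbbk^{n_k+1}$, the shift $\tfrac{j}{n_k+1}\pi^k_{n_k+1}$ being the central correction coming from the $L/L'$-action on that summand.

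The main obstacle is the weight-matching required by condition (iii): one has to verify that the central shift by $\tfrac{j}{n_k+1}\pi^k_{n_k+1}$ is both necessary (for each summand to carry an honest $\mathrm{SL}_{n_k+1}$-structure compatible with the embedding $\mathcal{B}(\lambda)\subset\mathcal{X}(T)^\vee$) and sufficient (so that the whole summand assembles into a faithful $L'$-representation), and then to identify the remaining $p-n$ basis elements of $\mathcal{B}(\lambda)^*$ as free torus coordinates of $Z$ not attached to any component $I_k$. This is essentially linear algebra in the Cartan datum, but the delicate point is that the compatibility is asked with respect to $\mathcal{X}(T)$ rather than with respect to the abstract weight lattice $\Lambda$, which is exactly what makes the criterion sensitive to the isogeny class of $G$ and motivates the separate treatment of $\mathrm{Sp}(2r)$ to come.
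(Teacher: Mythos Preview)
The paper does not supply its own proof of this statement: it is quoted as a known criterion, with references to \cite[Thm.~2.2]{Ru} and \cite[Thm.~9]{Ti}, and is then used as a black box in the subsequent analysis (Lemma~\ref{lem: copesi fondamentali} and Theorem~\ref{teo: smoothness caso non aggiunto}). There is therefore nothing in the paper to compare your argument against.

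Your outline is in the spirit of the cited proofs: reduce smoothness to a single $B\times B^-$-fixed point by homogeneity, apply the local structure theorem to obtain a product decomposition with a slice $Z$ carrying the action of the Levi $L$ whose simple roots are $\Delta\smallsetminus\Supp(\lambda)$, and then translate smoothness of $Z$ into the three combinatorial conditions. One imprecision is worth flagging: $Z$ is not merely ``a toroidal embedding of the quotient torus $L/L'$'' but an affine $L$-spherical variety, which in the smooth case is a linear $L$-module with a dense $B_L$-orbit. The purely toric part of $Z$ accounts for condition~(ii), while conditions~(i) and~(iii) come from the non-abelian part of the $L'$-action on $T_{z_0}Z$, which forces each simple factor of $L'$ to act as $\mathrm{SL}_{n_k+1}$ on a standard summand $\Bbbk^{\,n_k+1}$; this is exactly Timashev's argument. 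Your second step describes this correctly, so the slip is in the wording of the first step rather than in the overall strategy.
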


Before applying previous theorem in the case of our interest, we need an auxiliary lemma.

\begin{lem}	\label{lem: copesi fondamentali}
Let $\lambda \in \mathcal{X}(T)^+$ be an almost faithful weight such that $\widetilde X_\lambda$ is smooth and denote $\mathcal{B}(\lambda) \subset \mathcal{X}(T)^\vee \cap \mathcal{C}(\widetilde
X_\lambda)$ the associated basis of $\mathcal{X}(T)^\vee$. If $\beta \in \Delta^e \smallsetminus \Supp(\lambda)$, then $-\omega_\beta^\vee \in \mathcal{B}(\lambda)$.
\end{lem}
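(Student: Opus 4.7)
The plan is to exploit the explicit structure provided by condition iii) of Theorem~\ref{teo: smooth-timashev} together with the description of the extremal rays of $\mathcal{C}(\widetilde X_\lambda)$ coming from Proposition~\ref{prop: Q-fattorialita} in order to identify precisely which basis element of $\mathcal{B}(\lambda)$ sits on the ray $\mathbb{R}_{\geqslant 0} \cdot (-\omega_\beta^\vee)$.

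First I would let $v \in \mathcal{B}(\lambda)$ be the unique basis element lying on the extremal ray generated by $-\omega_\beta^\vee$ (which is extremal by Proposition~\ref{prop: Q-fattorialita} since $\beta \in \Delta^e \smallsetminus \Supp(\lambda)$, and unique because $\mathcal{C}(\widetilde X_\lambda)$ is simplicial). Writing $v = -c\,\omega_\beta^\vee$ with $c>0$, the entire goal reduces to proving $c=1$. Let $\pi \in \mathcal{B}(\lambda)^*$ be the dual basis element corresponding to $v$, and let $k_0$ be the index with $\pi \in J_{k_0}$.

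The next step is to rule out all possibilities for $\pi$ except $\pi = \pi_{n_{k_0}+1}^{k_0}$ with $k_0 \leqslant n$. Indeed, if $\pi = \pi_j^{k_0}$ with $j \leqslant n_{k_0}$, then the dual basis condition in Theorem~\ref{teo: smooth-timashev} iii) forces $v = (\alpha_j^{k_0})^\vee$, whence $\langle \alpha_j^{k_0}, v\rangle = 2 > 0$; but $v = -c\,\omega_\beta^\vee$ satisfies $\langle \alpha, v\rangle = -c\,\delta_{\alpha,\beta} \leqslant 0$ for every simple root $\alpha$, a contradiction. Similarly, if $k_0 > n$, using the relation $\omega_\ell^h = \pi_\ell^h - \tfrac{\ell}{n_h+1}\pi_{n_h+1}^h$ one checks that $v$ pairs trivially with every fundamental weight of every $I_h$, and hence with every simple root in $\Delta \smallsetminus \Supp(\lambda)$, contradicting $\langle \beta, v\rangle = -c \neq 0$.

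Once $\pi = \pi_{n_{k_0}+1}^{k_0}$ with $k_0 \leqslant n$ is pinned down, I would compute, for every $\alpha_i^h \in \Delta \smallsetminus \Supp(\lambda)$,
\[
\langle \alpha_i^h, v\rangle = -\frac{\delta_{h,k_0}}{n_{k_0}+1}\sum_{\ell} c_{i\ell}\,\ell,
\]
where $(c_{i\ell})$ is the Cartan matrix of $I_{k_0}$, which is of type $\mathsf{A}_{n_{k_0}}$ by Theorem~\ref{teo: smooth-timashev} i). The elementary type $\mathsf{A}$ identity $\sum_\ell c_{i\ell}\,\ell = 0$ for $i < n_{k_0}$ and $\sum_\ell c_{n_{k_0},\ell}\,\ell = n_{k_0}+1$ then shows that $v$ pairs as $-1$ with $\alpha_{n_{k_0}}^{k_0}$ and as $0$ with every other simple root in $\Delta \smallsetminus \Supp(\lambda)$. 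Comparing with the pairing pattern $\langle \alpha, v\rangle = -c\,\delta_{\alpha,\beta}$ forces $\beta = \alpha_{n_{k_0}}^{k_0}$ and $c = 1$, giving $v = -\omega_\beta^\vee \in \mathcal{B}(\lambda)$. The only mildly technical point is the Cartan matrix identity at the end; everything else is bookkeeping with the dual basis.
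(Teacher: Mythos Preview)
Your proposal is correct and follows the same strategy as the paper: both use Proposition~\ref{prop: Q-fattorialita} to find the basis element $v\in\mathcal{B}(\lambda)$ on the ray $\mathbb{Q}^+(-\omega_\beta^\vee)$, and then exploit condition iii) of Theorem~\ref{teo: smooth-timashev} to determine the scaling. The organization differs slightly. The paper first fixes the connected component $I_k\subset\Delta\smallsetminus\Supp(\lambda)$ containing $\beta$, identifies the corresponding block $J_k\subset\mathcal{B}(\lambda)^*$, shows that $v_\beta^*=\pi_{n_k+1}^k$, and then evaluates $\langle\omega_\beta^L,v_\beta\rangle$ in two different ways (once via the relation $\omega_\beta^L=\pi_i^k-\tfrac{i}{n_k+1}\pi_{n_k+1}^k$, once via the explicit type-$\mathsf{A}$ formula for $\omega_\beta^L$ in terms of simple roots). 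You instead start from the dual element $\pi$ of $v$, locate it in the partition $\bigcup J_k$ by elimination, and then evaluate $\langle\alpha_i^h,v\rangle$ for all $\alpha_i^h\in\Delta\smallsetminus\Supp(\lambda)$ via the Cartan-matrix identity $\sum_\ell c_{i\ell}\,\ell=(n_{k_0}+1)\delta_{i,n_{k_0}}$. These are dual computations with the same type-$\mathsf{A}$ content; your version avoids writing down the explicit fundamental-weight formula at the cost of a small case analysis on the position of $\pi$. One minor point: when you write ``forces $v=(\alpha_j^{k_0})^\vee$'', you are implicitly using that simple coroots are primitive in $\mathcal{X}(T)^\vee$ and hence already lie in $\mathcal{B}(\lambda)$; this is true (since $\langle\omega_\alpha,\alpha^\vee\rangle=1$), but worth stating, and in any case a positive multiple would suffice for the contradiction you derive.
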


\begin{proof}
Denote $\mathcal{B}(\lambda) = \{v_1, \ldots, v_r\}$ and denote $\mathcal{B}(\lambda)^* = \{v_1^*, \ldots, v_r^*\}$ the basis of $\mathcal{X}(T)$ which is dual to $\mathcal{B}(\lambda)$,
defined by $\langle v_i^*, v_j \rangle = \delta_{ij}$. Denote $\Delta \smallsetminus \Supp(\lambda) = \bigcup_{k=1}^n I_k$ the decomposition in connected components and
denote $\mathcal{B}(\lambda)^* = \bigcup_{k=1}^p J_k$ the partition which exists by Theorem \ref{teo: smooth-timashev} iii), where $p\geq n$.

Fix a connected component $I_k = \{\alpha_1^k,\ldots,\alpha_{n_k}^k\} \subset \Delta \smallsetminus \Supp(\lambda)$ and consider the associated component $J_k =
\{\pi_1^k,\ldots,\pi_{n_k+1}^k\} \subset \mathcal{B}(\lambda)^*$, where the numberings in $I_k$ and in $J_k$ are those of Theorem \ref{teo: smooth-timashev} iii).
Denote $L \subset G$ the Levi subgroup associated to $I_k$ and denote by $\omega^L_\alpha$ the fundamental weight of $L$ associated to a simple root $\alpha \in
I_k$.

By Proposition \ref{prop: Q-fattorialita} it follows that $I_k$ is of type $\mathsf{A}$ and it contains exactly an extremal root $\beta \in \Delta^e$. By the
description of the extremal rays of $\mathcal{C}(\widetilde X_\lambda)$ it follows that there is a unique element in $\mathcal{B}(\lambda) \smallsetminus I_k^\vee$ which is non-orthogonal
to $I_k$, namely the element $v_\beta \in \mathcal{B}(\lambda)$ which is proportional to $\omega_\beta^\vee$. Therefore up to renumbering we must have $J_k = \{
w_{\alpha_1^k}^*, \ldots, w_{\alpha_{n_k}^k}^*, v_\beta^* \}$, where if $\alpha \in \Delta \smallsetminus \Supp(\lambda)$ we denote by $w_\alpha \in \mathcal{B}(\lambda)$ the unique
element which is proportional with $\alpha^\vee$.

Notice that $v_\beta^* = \pi^k_{n_k +1}$: indeed $\beta \in \Supp_\Delta(\omega^L_\alpha)$ for every $\alpha \in \Delta \smallsetminus \Supp(\lambda)$, hence
$$
	\langle \pi_i^k, v_\beta \rangle - \frac{i}{n_k +1} \langle \pi_{n_k +1}^k, v_\beta \rangle = \langle \omega^L_{\alpha_i^k}, v_\beta \rangle \neq 0.
$$
Set now $v_\beta = -m\omega_\beta^\vee$, where $m > 0$, and set $\beta = \alpha_i^k$. Then by Theorem \ref{teo: smooth-timashev} iii) it follows that $\omega_\beta^L
= \pi_i^k - \frac{i}{n_k+1} \pi_{n_k+1}^k$, hence
$$
	\langle \omega^L_\beta , v_\beta \rangle = -\frac{i}{n_k+1} \langle v_\beta^*, v_\beta \rangle = -\frac{i}{n_k+1}
$$
for every $i \leqslant n_k$. On the other hand $L$ is of type $\mathsf{A}_{n_k}$, hence
$$
	\omega^L_\beta = \frac{1}{n_k+1} \left(\sum_{j=1}^{i-1} j(n_k-i+1)\alpha^k_j + \sum_{j=i}^{n_k} i(n_k - j +1) \alpha^k_j\right)
$$	
and we get
$$
	\langle \omega^L_\beta , v_\beta \rangle = -\frac{im(n_k-i+1)}{n_k+1} \langle \beta , \omega_\beta^\vee \rangle = - \frac{im(n_k - i +1)}{n_k+1}.
$$
Comparing the two equalities, it follows then $i = n_k$ and $m=1$: therefore the numbering of $I_k$ starts from the extremal root of $I_k$ which is not
extremal in $\Delta$ and $-\omega_\beta^\vee \in \mathcal{B}(\lambda)$.
\end{proof}

\begin{teo}	\label{teo: smoothness caso non aggiunto}
Suppose that $G$ is simple and non-adjoint and let
$\lambda \in \mathcal{X}(T)^+$ be an almost faithful weight.
Then $\widetilde X_\lambda$ is smooth if and only if
$G = \mathrm{Sp}(2r)$ with $r \geqslant 1$,
$\Supp(\lambda)$ is connected and $\alpha_r \in \Supp(\lambda)$.
\end{teo}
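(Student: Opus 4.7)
The plan is to use Timashev's smoothness criterion (Theorem \ref{teo: smooth-timashev}) together with the structural information already established. Smoothness of $\widetilde X_\lambda$ forces local factoriality (Proposition \ref{prop: fattorialita}), so $\mathcal{C}(\widetilde X_\lambda)$ is generated by a $\mathbb{Z}$-basis $\mathcal{B}(\lambda)$ of $\mathcal{X}(T)^\vee$. By Proposition \ref{prop: Q-fattorialita}, $\Supp(\lambda)$ is connected (since $G$ is simple) and satisfies the branching condition, and the extremal rays of $\mathcal{C}(\widetilde X_\lambda)$ are known explicitly. Lemma \ref{lem: copesi fondamentali} then forces $-\omega_\beta^\vee \in \mathcal{B}(\lambda)$ for every $\beta \in \Delta^e \smallsetminus \Supp(\lambda)$; in particular $\omega_\beta^\vee \in \mathcal{X}(T)^\vee$. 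Combined with Theorem \ref{teo: smooth-timashev} (i), every connected component of $\Delta\smallsetminus\Supp(\lambda)$ is of type $\mathsf{A}$.

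For the necessity I would proceed case-by-case through the simple non-adjoint types, exploiting the non-trivial quotient $\Lambda^\vee/\mathcal{X}(T)^\vee$. For $G=\mathrm{SL}(r+1)$ with $r\geq 2$: no fundamental coweight $\omega_i^\vee$ lies in $\Lambda^\vee_{\mathrm{rad}}=\mathcal{X}(T)^\vee$, so Lemma \ref{lem: copesi fondamentali} forces $\Supp(\lambda)\supset\Delta^e$, and by connectedness $\Supp(\lambda)=\Delta$; then the sublattice spanned by the primitive generators $-k_i\omega_i^\vee$ has index $\prod k_i/\det(C)>1$ in $\Lambda^\vee_{\mathrm{rad}}$, contradicting the basis property. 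For types $\mathsf{B}_r$ (Spin), $\mathsf{D}_r$, $\mathsf{E}_6$, $\mathsf{E}_7$ and intermediate covers, one analyzes in the same manner which $\omega_i^\vee$ lie in $\mathcal{X}(T)^\vee$ (using the explicit expression $\omega_i^\vee=\sum_k(C^{-1})_{ki}\alpha_k^\vee$) and shows that either Lemma \ref{lem: copesi fondamentali} is incompatible with $\Supp(\lambda)$ connected, or the lattice generation fails. Only in type $\mathsf{C}_r$ with $G=\mathrm{Sp}(2r)$ do things work: the coweight $\omega_r^\vee=\tfrac{1}{2}\sum_k k\alpha_k^\vee$ fails to lie in $\Lambda^\vee_{\mathrm{rad}}$ precisely on $\alpha_r$, whereas $\omega_i^\vee$ for $i<r$ is integral. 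Lemma \ref{lem: copesi fondamentali} applied to $\beta=\alpha_r$ (if $\alpha_r\notin\Supp(\lambda)$) then fails, forcing $\alpha_r\in\Supp(\lambda)$.

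For the sufficiency, write $\Supp(\lambda)=\{\alpha_p,\alpha_{p+1},\ldots,\alpha_r\}$ for some $1\leq p\leq r$ and verify the three Timashev conditions. Condition (i) holds since $\Delta\smallsetminus\Supp(\lambda)$ is empty or one type $\mathsf{A}_{p-1}$ component. For (ii), the primitive generators in $\mathcal{X}(T)^\vee=\mathbb{Z}[\Delta^\vee]$ are $\alpha_1^\vee,\ldots,\alpha_{p-1}^\vee$, together with $-\omega_1^\vee$ (when $p\geq 2$) and $-\omega_{p+1}^\vee,\ldots,-\omega_{r-1}^\vee,-2\omega_r^\vee$; a direct determinant calculation using the explicit formula $\omega_i^\vee=\sum_{k<r}\min(k,i)\alpha_k^\vee+i\alpha_r^\vee$ for $i<r$ and $-2\omega_r^\vee=-\sum_k k\alpha_k^\vee$ yields $\pm 1$, giving a $\mathbb{Z}$-basis. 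For (iii), compute the dual basis $\mathcal{B}(\lambda)^*$ and enumerate $I_1=\{\alpha_1,\ldots,\alpha_{p-1}\}$ starting from the root $\alpha_{p-1}$ adjacent to $\Supp(\lambda)$ and ending at the extremal root $\alpha_1$; the identity $\pi_j^1-(j/p)\pi_p^1=\omega_j^{I_1}$ then holds by direct verification, with $\pi_p^1$ corresponding to the dual of $-\omega_1^\vee$.

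The main obstacle is the necessity direction: one must systematically rule out all non-$\mathsf{C}_r$ simple non-adjoint types and, within type $\mathsf{C}_r$, constrain $\Supp(\lambda)$. The critical observation making this tractable is that Lemma \ref{lem: copesi fondamentali} reduces the question to whether specific fundamental coweights lie in $\mathcal{X}(T)^\vee$, a question answered type-by-type via the inverse Cartan matrix. The subtlety is that even when $\omega_\beta^\vee\in\mathcal{X}(T)^\vee$ for all extremal $\beta\notin\Supp(\lambda)$, one must still verify the lattice index is $1$, which is what singles out type $\mathsf{C}_r$: only here does the factor of $2$ introduced by $-2\omega_r^\vee$ exactly compensate for $\det(C)=2$, giving index $1$.
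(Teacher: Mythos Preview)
Your approach is correct and follows the same overall strategy as the paper: both hinge on Timashev's criterion together with Proposition~\ref{prop: Q-fattorialita} and Lemma~\ref{lem: copesi fondamentali}, then proceed type by type. Two differences worth noting. First, the paper disposes of the regular case uniformly at the outset (the negative Weyl chamber is generated by a basis of $\mathcal{X}(T)^\vee$ precisely when $G$ is adjoint or of type $\mathsf{C}_r$), which lets the subsequent case analysis assume $\Delta^e\smallsetminus\Supp(\lambda)\neq\varnothing$; your lattice-index argument achieves the same end but re-derives this fact inside each case. Second, in types $\mathsf{B}$, $\mathsf{D}$, $\mathsf{E}_6$, $\mathsf{E}_7$ the paper avoids any index computation: instead of bounding $\prod k_i/\det(C)$, it observes that since $\alpha_j^\vee\in\mathcal{X}(T)^\vee$ and $\langle\alpha_i,\alpha_j^\vee\rangle=-1$ for a suitable adjacent pair, the unique $v_i\in\mathcal{B}(\lambda)$ proportional to $\omega_i^\vee$ must in fact equal $-\omega_i^\vee$, so $\omega_i^\vee\in\mathcal{X}(T)^\vee$ forces $G$ adjoint. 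One small gap in your sketch: in type $\mathsf{A}$ you treat only $\mathrm{SL}(r+1)$, but non-adjoint intermediate quotients $\mathrm{SL}(r+1)/\mu_d$ with $1<d<r+1$ must also be ruled out; the paper's remark that $\omega_1^\vee$ (and $\omega_r^\vee$) each generate $\Lambda^\vee/\mathbb{Z}[\Delta^\vee]$ handles all these covers simultaneously, whereas your index argument would need to be rerun for each $\mathcal{X}(T)^\vee$.
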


Notice that $\mathrm{Sp}(4)$ has type $\mathsf{C}_2 = \mathsf{B}_2$, while $\mathrm{Sp}(2)$ has type $\mathsf{C}_1 = \mathsf{A}_1$.

\begin{proof}
Suppose that $G = \mathrm{Sp}(2r)$ and assume that $\Supp(\lambda)$ is connected and $\alpha_r \in \Supp(\lambda)$: then Proposition \ref{prop: Q-fattorialita} and
Theorem \ref{teo: smooth-timashev} show that the variety $\widetilde X_\lambda$ is smooth.

Suppose conversely that $\widetilde X_\lambda$ is smooth. If $\lambda$ is regular, then $\mathcal{C}(\widetilde X_\lambda)$ is the negative Weyl chamber $C^-$, so it is simplicial and
condition iii) of Theorem \ref{teo: smooth-timashev} is empty. Hence we only need to verify that $C^-$ is generated by a basis of $\mathcal{X}(T)^\vee$, and it
is easy to verify that this happens if and only if $G$ is either adjoint or of type $\mathsf{C}_r$ with $r\geq 1$.

Therefore we may assume that $\lambda$ is non-regular; by Proposition \ref{prop: Q-fattorialita} it follows that $\Supp(\lambda)$ is connected and that there
exists a (unique) basis $\mathcal{B}(\lambda) \subset \mathcal{X}(T)^\vee$ which generates $\mathcal{C}(\widetilde X_\lambda)$ as a cone. Since $\Supp(\lambda)$ is connected and since
$\lambda$ is not regular, we must have $\Delta^e \smallsetminus \Supp(\lambda) \neq \varnothing$. Since otherwise $G$ is necessarily adjoint, $\Delta$ cannot be of type $\mathsf{E}_8$,
$\mathsf{F}_4$, $\mathsf{G}_2$.

\textit{Type $\mathsf{A}$.} If $\Delta$ is of type $\mathsf{A}_r$ with $r > 1$, then $\Lambda^\vee/\mathbb{Z}[\Delta^\vee]$ is generated both by $\omega_1^\vee$ and by $\omega_r^\vee$.
Since $\Delta^e \smallsetminus \Supp(\lambda) \neq \varnothing$, Lemma \ref{lem: copesi fondamentali} implies then $\mathcal{X}(T)^\vee = \Lambda^\vee$, i.e. $G$ is adjoint.

\textit{Type $\mathsf{B}$.} If $\Delta$ is of type $\mathsf{B}_r$ with $r > 2$, then $\Phi^\vee$ has type $\mathsf{C}_r$  and $\Lambda^\vee/\mathbb{Z}[\Delta^\vee]$ is generated by any
fundamental coweight $\omega_i^\vee$ with $i$ odd. If $\alpha_1 \not \in \Supp(\lambda)$, Lemma \ref{lem: copesi fondamentali} implies that $\mathcal{X}(T)^\vee =
\Lambda^\vee$, i.e. $G$ is adjoint. Otherwise, by Theorem \ref{teo: smooth-timashev} i) it must be $\Supp(\lambda) = \{\alpha_1, \ldots, \alpha_{r-1}\}$, while by
Propositions \ref{prop: fattorialita} and \ref{prop: Q-fattorialita} there exists an element $v_1 \in \mathcal{B}(\lambda)$ which is a multiple of $\omega_1^\vee$.
Notice that $\alpha_1^\vee \not \in \Supp(v)$ for every $v \in \mathcal{B}(\lambda) \smallsetminus \{v_1\}$: since $\langle \alpha_1, \alpha_2^\vee \rangle = -1$ and since
$\alpha_2^\vee \in \mathcal{X}(T)^\vee$, it follows then $v_1 = -\omega_1^\vee$. Hence $\mathcal{X}(T)^\vee$ is the coweight lattice and $G$ is adjoint.

\textit{Type $\mathsf{C}$.} If $\Delta$ has type $\mathsf{C}_r$ with $r \geqslant 1$, then $\Delta^\vee$ has type $\mathsf{B}_r$ and $\Lambda^\vee/\mathbb{Z}[\Delta^\vee]$ is generated by
$\omega_r^\vee$. Therefore Lemma \ref{lem: copesi fondamentali} implies that, if $\alpha_r \not \in \Supp(\lambda)$, then $\mathcal{X}(T)^\vee$ is the coweight lattice
and $G$ is adjoint. Hence we get $\Supp(\lambda) = \{\alpha_k, \ldots, \alpha_r\}$.

\textit{Type $\mathsf{D}$.} If $\Delta$ is of type $\mathsf{D}_r$, then $\Lambda^\vee/\mathbb{Z}[\Delta^\vee]$ is generated by any two fundamental coweights associated to some
simple root in $\Delta^e$. Therefore if $|\Supp(\lambda) \cap \Delta^e| \leqslant 1$ Lemma \ref{lem: copesi fondamentali} implies that $G$ is adjoint. Suppose that
$|\Supp(\lambda) \cap \Delta^e| = 2$. If $\alpha_1 \not \in \Supp(\lambda)$, then by Proposition \ref{prop: Q-fattorialita} we get that
$\Supp(\lambda)=\{\alpha_i,\ldots,\alpha_r\}$ for some $i\leq r-2$ and by Propositions \ref{prop: fattorialita} and \ref{prop: Q-fattorialita} they exist elements
$v_{r-1}, v_r \in \mathcal{B}(\lambda)$ which are multiples resp. of $\omega_{r-1}^\vee$ and of $\omega_r^\vee$. Notice that $\Supp(v) \cap \{\alpha^\vee_{r-1},
\alpha_r^\vee\} = \varnothing$ for every $v \in \mathcal{B}(\lambda) \smallsetminus\{v_{r-1},v_r\}$: since $\alpha_{r-2}^\vee \in \mathcal{X}(T)^\vee$ and since $\langle \alpha_{r-1},
\alpha_{r-2}^\vee \rangle = \langle \alpha_r, \alpha_{r-2}^\vee \rangle = -1$, it follows then $v_{r-1} = -\omega_{r-1}^\vee$ and $v_r = -\omega_r^\vee$. Hence
$\mathcal{X}(T)^\vee$ is the coweight lattice and $G$ is adjoint.

Suppose now that $\alpha_1 \in \Supp(\lambda)$: up to an automorphism of $\Phi$, by Proposition \ref{prop: Q-fattorialita} we may assume that
$\Supp(\lambda)=\{\alpha_1,\ldots,\alpha_{r-1}\}$. By Propositions \ref{prop: fattorialita} and \ref{prop: Q-fattorialita} there exist elements $v_1, v_{r-1} \in
\mathcal{B}(\lambda)$ which are multiples resp. of $\omega_1^\vee$ and of $\omega_{r-1}^\vee$. Notice that $\Supp(v) \cap \{\alpha_1^\vee, \alpha^\vee_{r-1}\} = \varnothing$ for
every $v \in \mathcal{B}(\lambda) \smallsetminus \{v_1, v_{r-1}\}$: since $\{\alpha_2^\vee, \alpha_{r-2}^\vee \} \subset \mathcal{X}(T)^\vee$ and since $\langle \alpha_1, \alpha_2^\vee
\rangle = \langle \alpha_{r-2}, \alpha_{r-1}^\vee \rangle = -1$, it follows then $v_1 = -\omega_1^\vee$ and $v_{r-1} = -\omega_{r-1}^\vee$. Hence $\mathcal{X}(T)^\vee$
is the coweight lattice and $G$ is adjoint.

\textit{Type $\mathsf{E}_6$.} If $\Delta$ is of type $\mathsf{E}_6$, then $\Lambda^\vee$ is generated both by $\omega_1^\vee$ and by $\omega_6^\vee$: hence by Lemma \ref{lem:
copesi fondamentali} it follows that $G$ is adjoint if $\Supp(\lambda) \cap \{\alpha_1, \alpha_6\} \neq \varnothing$. If this is not the case, then by Theorem
\ref{teo: smooth-timashev} i) it follows that $\Supp(\lambda)=\{\alpha_1,\alpha_3,\alpha_4,\alpha_5,\alpha_6\}$. By Propositions \ref{prop: fattorialita} and \ref{prop:
Q-fattorialita} there exists an element $v_1 \in \mathcal{B}(\lambda)$ which is a multiple of $\omega_1^\vee$. Notice that $\alpha_1^\vee \not \in \Supp(v)$ for every
$v \in \mathcal{B}(\lambda) \smallsetminus \{v_1\}$: since $\langle \alpha_1, \alpha_3^\vee \rangle = -1$ and since $\alpha_3^\vee \in \mathcal{X}(T)^\vee$, it follows then $v_1 =
-\omega_1^\vee$. Hence $\mathcal{X}(T)^\vee$ is the coweight lattice and $G$ is adjoint.

\textit{Type $\mathsf{E}_7$.} If $\Delta$ has type $\mathsf{E}_7$, then $\Lambda^\vee$ is generated both by $-\omega_2^\vee$ and by $-\omega_7^\vee$: hence by Lemma \ref{lem:
copesi fondamentali} it follows that $G$ is adjoint if $\Supp(\lambda) \cap \{\alpha_2, \alpha_7\} \neq \varnothing$. If this is not the case, then by Theorem
\ref{teo: smooth-timashev} i) it follows that $\Supp(\lambda) \supset \{\alpha_2,\alpha_4,\alpha_5,\alpha_6,\alpha_7\}$. By Propositions \ref{prop: fattorialita} and
\ref{prop: Q-fattorialita} there exists an element $v_2 \in \mathcal{B}(\lambda)$ which is a multiple of $\omega_2^\vee$. Notice that $\alpha_2^\vee \not \in \Supp(v)$
for every $v \in \mathcal{B}(\lambda) \smallsetminus \{v_2\}$: since $\langle \alpha_2, \alpha_4^\vee \rangle = -1$ and since $\alpha_4^\vee \in \mathcal{X}(T)^\vee$, it follows then
$v_2 = -\omega_2^\vee$. Hence $\mathcal{X}(T)^\vee$ is the coweight lattice and $G$ is adjoint.
\end{proof}

\end{document}